\documentclass[a4paper,fleqn]{cas-sc}

\usepackage[numbers, sort&compress]{natbib}

\usepackage[utf8]{inputenc} 
\usepackage{accents}				
\usepackage{todonotes}
\usepackage{subcaption}
\usepackage{stmaryrd} 
\usepackage{amsthm} 
\usepackage[makeroom]{cancel} 
\usepackage{bookmark} 
\usepackage{empheq}
\usepackage{ulem}
\usepackage{booktabs} 

\newcommand{\change}[1]{{\leavevmode\color{orange}#1}}

\newcommand{\norm}[1]{\left\lVert#1\right\rVert}
\newcommand{\mat}[1]{\underline{\mathbf{#1}}}
\newcommand\acclrvec[1]{\accentset{\,\leftrightarrow}{#1}}	
\newcommand{\blocktensor}[1]{\acclrvec{{\mathbf #1}}}	
	
\newcommand{\Nabla} {\vec{\nabla}}
\newcommand{\numfluxb}[1]{\hat{\mathbf{#1}} }

\newcommand{\bigpartialderiv}[2]{ \frac{\partial {#1}}{\partial {#2} } }

\newcommand\stateG[1]{\boldsymbol #1}			

\newcommand{\noncon}{\stateG{\Upsilon}}	

\newcommand{\entVar}{{\mathbf{v}}}

\newcommand\state[1]{\mathbf{#1}}

\newcommand{\DG}{{\mathrm{DG}}}
\newcommand{\FV}{{\mathrm{FV}}}

\newcommand{\sloc}{{\mathrm{loc}}}
\newcommand{\sjump}{{\mathrm{jump}}}

\newcommand{\Jan}{\stateG{\Phi}}

\newcommand{\numnonconsD}[1]{ #1^{\Diamond} }
\newcommand{\numnonconsS}[1]{ #1^{\star} }

\newcommand{\avg}[1]{\left\{\hspace*{-3pt}\left\{#1\right\}\hspace*{-3pt}\right\}}

\newcommand{\jump}[1]{\ensuremath{\left\llbracket #1 \right\rrbracket}}

\newcommand{\numnonconsDxi}[1]{ #1^{1\Diamond} }
\newcommand{\numnonconsDeta}[1]{#1^{2\Diamond} }

\newcommand{\numnonconsSxi}[1]{ #1^{1\star} }
\newcommand{\numnonconsSeta}[1]{#1^{2\star} }

\def\NN{\mathcal{N}} 

\newtheorem{proposition}{Proposition}
\newtheorem{corollary}{Corollary}
\newtheorem{remark}{Remark}

\begin{document}

\let\WriteBookmarks\relax
\def\floatpagepagefraction{1}
\def\textpagefraction{.001}

\shorttitle{Well-Balanced Subcell Limiting for DG Discretization of the SWE}
\shortauthors{Rueda-Ramírez et al.}

\title [mode = title]{Well-Balanced Subcell Limiting for Discontinuous Galerkin Discretizations of the Shallow-Water Equations}

\author[1]{Andrés M. Rueda-Ramírez}[orcid=0000-0001-6557-9162]
\cormark[1]
\ead{am.rueda@upm.es}
\credit{Conceptualization, Formal analysis, Methodology, Software. Visualization, Writing – original draft}

\author[2]{Patrick Ersing}[orcid=0009-0005-3804-5380]
\credit{Formal analysis, Methodology, Software, Validation, Visualization, Writing – original draft}

\author[2]{Andrew R. Winters}
[orcid=0000-0002-5902-1522]
\credit{Formal analysis, Methodology, Software, Supervision, Visualization, Writing – original draft}

\author[3]{Gregor J. Gassner}
[orcid=0000-0002-1752-1158]
\credit{Funding acquisition, Methodology, Writing – original draft}

\address[1]{ETSIAE-UPM-School of Aeronautics, Universidad Politécnica de Madrid, Madrid, Spain}

\address[2]{Department of Mathematics, Applied Mathematics, Linköping University, 581 83, Linköping, Sweden}
\address[3]{Department of Mathematics and Computer Science, University of Cologne, Weyertal 86-90, 50931 Cologne, Germany}

\cortext[cor1]{Corresponding author}





\begin{abstract}
High-order discontinuous Galerkin (DG) methods equipped with subcell finite-volume (FV) limiters provide an efficient framework for the simulation of nonlinear hyperbolic balance laws featuring shocks and complex flow structures. 
However, for systems with non-conservative terms, the design of hybrid DG/FV schemes that simultaneously guarantee high-order accuracy for smooth solutions, robustness, and well-balancedness remains challenging.
In particular, for the shallow water equations with variable bottom topography, standard flux-differencing formulations combined with node-wise subcell limiting generally destroy the well-balanced property, even if both the underlying DG and FV discretizations are individually well-balanced.

In this work, we propose a novel flux-differencing formulation for non-conservative systems that enables node-wise subcell limiting while preserving steady states exactly. 
The key idea is to construct staggered DG fluxes whose non-conservative contributions are expressed in local-times-jump form and vanish individually at equilibrium. 
To achieve this structure, we introduce a suitable reformulation of the shallow water equations in which the source term is proportional to the gradient of the total water height. 
This reformulation allows the design of staggered fluxes that preserve equilibrium locally at the node level, thereby making arbitrary nodal blending with low-order FV fluxes admissible.

The resulting hybrid DG/FV method is high-order accurate, robust, and exactly well-balanced under node-wise limiting. 
Numerical experiments, including challenging two-dimensional dam-break configurations with wet/dry fronts and complex obstacle interactions, demonstrate the improved stability and accuracy of the proposed approach compared to existing subcell limiting strategies.

Although this work focuses on the shallow water equations, the well-balanced hybrid DG/FV methods developed here are applicable to a broader class of nonlinear systems of balance laws, provided certain requirements are satisfied in the discretization of the non-conservative terms.
\end{abstract}



\begin{keywords}
Subcell limiting, well-balanced methods, shallow water equations, non-conservative hyperbolic balance law, flux-differencing,  discontinuous Galerkin methods
\end{keywords}

\maketitle

\section{Introduction}

Hyperbolic systems of balance laws arise in a wide range of applications in fluid dynamics, geophysics, plasma physics, and multiphase flows. 
Many of these models contain non-conservative terms that play a crucial structural role, such as topographic source terms in the shallow water equations, divergence-related terms in magnetohydrodynamics, or gravitational forcing in compressible gas dynamics. 
Accurate and robust discretizations of such systems require numerical schemes that carefully balance conservative flux gradients and non-conservative contributions.

High-order discontinuous Galerkin (DG) methods have become a prominent tool for the approximation of nonlinear hyperbolic PDEs due to their flexibility, high-order accuracy, and favorable stability properties \cite{Wang2013High,Cockburn2000,Hindenlang2012, ranocha2021efficient}. 
In particular, split-form DG methods formulated in flux-differencing form allow the construction of entropy-conservative and entropy-stable schemes by employing symmetric two-point numerical fluxes \cite{Fisher2013a, Carpenter2014, Gassner2013, Gassner2016}. 
For nonlinear balance laws, suitable discretizations of non-conservative terms can be incorporated into this framework \cite{Bohm2018, wintermeyer2017entropy, rueda2023entropy, rueda2025entropy, coquel2021entropy, waruszewski2022entropy}, leading to provably stable and consistent high-order schemes.

Despite these advantages, purely high-order DG methods may exhibit non-physical oscillations in the presence of strong shocks, wet/dry interfaces, or under-resolved flow features. 
A widely adopted remedy is the use of hybrid DG/finite-volume (FV) subcell limiting strategies, in which the high-order fluxes are blended locally with robust low-order FV fluxes \cite{Hennemann2020, Rueda-Ramirez2020, RUEDARAMIREZ2022, rueda2023monolithic,mateo2023flux}. 
Such methods combine the accuracy of DG schemes in smooth regions with the admissibility properties of FV methods near discontinuities.

For conservative systems, the construction of subcell limiting procedures that preserve important structural properties, such as conservation and entropy stability, are well understood \cite{kuzmin2020monolithic,guermond2019invariant,Pazner2020, RUEDARAMIREZ2022,lin2024high}. 
For non-conservative systems of balance laws, however, additional difficulties arise. 
In particular, the preservation of steady-state solutions (well-balancedness) becomes delicate when DG and FV fluxes are blended locally at the subcell level.

The shallow water equations with variable bottom topography provide a canonical example.
These equations are commonly written as
\begin{equation}\label{eq:swe_std}
    \bigpartialderiv{}{t}
    \begin{pmatrix}
        h \\
        h v
    \end{pmatrix}
    +
    \bigpartialderiv{}{x}
    \begin{pmatrix}
        h v \\
        h v^2 + \frac{1}{2} g h^2
    \end{pmatrix}
    +
    g h \, 
    \bigpartialderiv{}{x}
    \begin{pmatrix}
        0 \\
        b
    \end{pmatrix}
    =
    \begin{pmatrix}
        0 \\
        0
    \end{pmatrix},
\end{equation}
where $h$ denotes the fluid depth, $v$ the velocity, $g$ the gravitational acceleration, and $b$ the bottom topography.
This system of balance laws must satisfy the well-known lake-at-rest property, in which the total fluid depth, $h+b$, remains constant with vanishing velocity, $v=0$. 
Numerous well-balanced DG and FV discretizations have been developed that preserve both entropy properties and the lake-at-rest steady state exactly \cite{wintermeyer2017entropy, ranocha2017shallow, winters2025trixi, ersing2025entropy, fjordholm2011well}. 

To the authors' knowledge, the only currently available strategy to combine DG and FV discretizations of non-conservative systems of balance laws at the node level is the approach proposed by \citet{rueda2024flux}. 
That framework requires the semi-discrete operator to be written in flux-differencing form, where the non-conservative contributions are expressed as sums of two-point interactions that must be representable as a product of a quantity evaluated locally at a node and a symmetric two-point term. 
This structure guarantees that arbitrary node-wise blending between high-order DG and low-order FV fluxes preserves consistency.

However, when existing well-balanced discretizations of the shallow water equations (e.g., the ones in \cite{fjordholm2011well,wintermeyer2017entropy}) are rewritten in such a local-times-symmetric form and subsequently combined with node-wise subcell limiting, the well-balanced property is generally lost. 
The difficulty is structural rather than algebraic. 
Although the full semi-discrete residual vanishes for the lake-at-rest equilibrium, this cancellation typically occurs only after summation over all nodal interactions. 
Individual staggered flux contributions do not vanish separately. 
As a consequence, introducing arbitrary nodal blending coefficients alters the delicate cancellation mechanism between different node pairs, thereby destroying the discrete balance between flux gradients and source terms.

For this reason, current well-balanced hybrid DG/FV methods for the shallow water equations are restricted to element-wise limiting strategies \cite{careaga2026entropy, ersing2025entropy}, where the entire high-order operator is replaced locally and the equilibrium-preserving cancellation remains intact. 
Such approaches, however, are significantly less local and therefore reduce the potential adaptivity and resolution advantages of node-wise subcell limiting.

The main objective of this work is to overcome this limitation. 
We design a novel flux-differencing formulation for non-conservative systems that guarantees that each staggered DG flux vanishes individually at equilibrium. 
This property ensures that well-balancedness holds locally at the node level and therefore remains intact under arbitrary nodal blending with low-order FV fluxes.

The key ingredient is a local-times-jump representation of the non-conservative term, in which the numerical contribution is proportional to the jump of a physically relevant equilibrium quantity. 
For the shallow water equations, we introduce a reformulation of the equations that is closely related to the formulation of \citet{ersing2025entropy}, in which the source term depends on the gradient of the total water height. 
Since this quantity is constant in the lake-at-rest steady state, the corresponding jump vanishes identically, and every staggered flux contribution is zero at equilibrium.

Based on this structure, we derive new staggered DG fluxes that (i) recover the original high-order split-form DG discretization, (ii) vanish individually for equilibrium states, and (iii) are compatible with node-wise subcell limiting.

The proposed method is implemented within the high-order DG framework provided by {Trixi.jl} \cite{ranocha2022adaptive, schlottkelakemper2021purely, schlottkelakemper2025trixi} and its shallow water extension {TrixiShallowWater.jl} \cite{winters2025trixi}. 
Its performance is assessed through a set of numerical experiments for the shallow water equations with variable bottom topography. 
These experiments include well-balancedness verification tests, symmetric dam-break configurations, and challenging two-dimensional flows past obstacles involving wet/dry interfaces. 
The results demonstrate that the proposed node-wise limiting strategy preserves steady states exactly, maintains symmetry in symmetric configurations, and enhances robustness compared to previously available hybrid DG/FV approaches.

The remainder of this paper is organized as follows. 
Section~\ref{sec:state-of-art} reviews the state of the art, introducing the high-order split-form DG discretization for non-conservative systems, discussing subcell limiting techniques, and analyzing the structural loss of well-balancedness under node-wise blending. 
Section~\ref{sec:fix-limiting} presents a reformulation of the shallow water equations together with the derivation of the new flux-differencing formulation. 
Numerical results are reported in Section~\ref{sec:Application}, followed by concluding remarks.

\section{State of the art} \label{sec:state-of-art}

For clarity and compactness, in this section, we present the numerical discretization schemes in the setting of a one-dimensional non-conservative system of balance laws. 
The extension to higher-dimensional systems on general curvilinear meshes follows naturally through tensor-product constructions, as explained in Appendix \ref{app:2d}.

Although this work focuses on the shallow water equations, the well-balanced hybrid DG/FV methods developed here are applicable to a broader class of nonlinear systems of balance laws, provided that certain requirements are satisfied in the discretization of the non-conservative terms.
In particular, we consider PDEs of the form
\begin{equation} \label{eq:noncons-system}
\bigpartialderiv{}{t} \state{u} 
+ \bigpartialderiv{}{x} \state{f} (\state{u}) 
+ \underbrace{
    \stateG{\phi} (\state{u}) \circ \bigpartialderiv{}{x} {\state{r}(\state{u}, x)}
}_{\noncon(\state{u}, \partial_x \state{u})}
= \state{0},
\end{equation}
where $\state{u}$ denotes the vector of prognostic variables, $\state{f}$ is a (generally nonlinear) advective flux function, and $\noncon$ represents a non-conservative term. The latter can be expressed as the Hadamard (element-wise) product between the state-dependent vector $\stateG{\phi}(\state{u})$ and the gradient of $\state{r}(\state{u},x)$, a spatially varying quantity that may also depend on $\state{u}$.
A wide range of systems can be cast in this form, including the shallow water equations with variable bottom topography, the magneto-hydrodynamics (MHD) equations \cite{Powell2001,Dedner2002,Derigs2017}, the Baer–Nunziato model for multiphase flow \cite{baer1986two}, and the compressible Euler equations with gravitational effects \cite{waruszewski2022entropy,chandrashekar2017well}, among others.

\subsection{High-order DG discretization}

To approximate the solution of \eqref{eq:noncons-system}, we employ a split-form discontinuous Galerkin (DG) method in flux-differencing form based on Legendre–Gauss–Lobatto (LGL) nodes (see, e.g., \cite{Gassner2016,ranocha2021efficient}).
The computational domain is partitioned into non-overlapping elements $\Omega_e$, within which all variables are represented by piecewise Lagrange interpolating polynomials of degree $N$ defined on the LGL nodes, $\ell_k(\xi)$ with $k = 0, \ldots, N$. These basis functions are continuous inside each element but discontinuous across element interfaces.

Equation \eqref{eq:noncons-system} is then multiplied by an arbitrary test polynomial of degree $N$ and integrated by parts over each element of the mesh. The resulting integrals are evaluated using an $N+1$ point LGL quadrature rule on the reference element $\xi \in [-1,1]$. 
To provide nonlinear stability properties, such as entropy stability, the volume terms are reformulated in split form using two-point fluxes \cite{Fisher2013a,Gassner2013,Carpenter2014,Gassner2016}, leading to the expression \cite{rueda2023entropy,rueda2024flux}
\begin{empheq}{align} \label{eq:DGSEM} 
m_j \dot{\state{u}}^{\DG}_j 
+ &
\underbrace{
\sum_{k=0}^N S_{jk} \left( \state{f}^{*}_{(j,k)} + \numnonconsS{\Jan}_{(j,k)} \right)
}_{\mathrm{Volume \, term}}
- 
\underbrace{
\delta_{j0} \left( \numfluxb{f}_{(0,L)} + \numnonconsD{\Jan}_{(0,L)} \right)
+ \delta_{jN} \left( \numfluxb{f}_{(N,R)} + \numnonconsD{\Jan}_{(N,R)} \right)
}_{\mathrm{Surface \,  term}}
= \state{0},
\end{empheq}
where $m_j := J \omega_j$ denotes the $j$-th diagonal entry of the mass matrix, $J$ is the determinant of the Jacobian of the mapping from reference to physical space, $\xi \in [-1,1] \mapsto x \in \Omega_e$, $\omega_j$ is the LGL quadrature weight at node $j$, $\delta_{ij}$ is the Kronecker delta associated with nodes $i$ and $j$, and $\mat{S}$ is a skew-symmetric derivative matrix.

The derivative matrix used in the volume term, $\mat{S} \coloneqq 2\mat{Q} - \mat{B} = \mat{Q} - \mat{Q}^T$, is skew-symmetric (see, e.g., \cite{Gassner2013,ranocha2021efficient}), where the matrix $\mat{Q}$ is defined in terms of the quadrature weights and the Lagrange interpolating polynomials, $Q_{jk}=\omega_j \ell'_k(\xi_j)$, and $\mat{B} := \text{diag}(-1, 0,  \ldots, 0, 1)$ is the so-called boundary matrix.
The matrices $\mat{Q}$ and $\mat{B}$ fulfill the summation-by-parts (SBP) property \cite{Gassner2013},
\begin{equation}\label{eq:sbp_property}
    \mat{Q} + \mat{Q}^T = \mat{B},
\end{equation}
a discrete analog of integration by parts.
Moreover, they fulfill the following properties \cite{Gassner2013, Hennemann2020},
\begin{align}\label{eq:sbp_sum_row}
    \sum_{k=0}^N Q_{jk} &= 0,
    &\sum_{k=0}^N B_{jk} &= \delta_{jN} - \delta_{j0}, &\left(j=0, \ldots, N\right),
    \\
    \sum_{j=0}^N Q_{jk} &= \delta_{kN} - \delta_{k0},
    \label{eq:sbp_sum_boundary2}
    &\sum_{j=0}^N B_{jk} &= \delta_{kN} - \delta_{k0}, &\left(k=0, \ldots, N\right).
\end{align}
Finally, $\mat{Q}$, $\mat{B}$, and $\mat{S}$ are skew-centrosymmetric matrices (see, e.g., \cite{chen1996reducing}):
\begin{align}\label{eq:skew_centrosymmetry}
    Q_{jk} = - Q_{N-j,N-k},&  &B_{jk} = - B_{N-j,N-k},&  &S_{jk} = - S_{N-j,N-k}.
\end{align}

The discretization naturally separates into surface and volume contributions.
In the surface contribution, $\numfluxb{f}$ denotes the numerical flux function, while $\numnonconsD{\Jan}_{(j,k)}$ represents the numerical non-conservative surface term. Both are typically constructed from approximate Riemann solvers and therefore depend on local values as well as those from neighboring elements to the left ($L$) and right ($R$).

The volume contribution is computed using the skew-symmetric (and skew-centrosymmetric) derivative matrix $\mat{S}$ applied to two-point volume fluxes. Specifically, ${\state{f}}^{*}_{(j,k)}$ is a symmetric two-point flux function, consistent with the continuous flux, while $\numnonconsS{\Jan}_{(j,k)}$ is a generally non-symmetric two-point discretization of the non-conservative term. Appropriate choices of these numerical fluxes allow the construction of flexible split-form formulations of the nonlinear PDE terms, which mitigate aliasing errors and, in some cases, ensure provable entropy stability (see, e.g., \cite{Fisher2013,Fisher2013a,Carpenter2014,Gassner2013,Gassner2016,Renac2019}).

Depending on how the non-conservative term is discretized, the two-point flux function $\numnonconsS{\Jan}_{(j,k)}$ typically satisfies one of the following two consistency conditions:
\begin{enumerate}
\item $\numnonconsS{\Jan}_{(j,j)} = \stateG{\Phi}_j := \stateG{\phi}(\state{u}_j)\, \circ \,\state{r}_j$.
This form of consistency commonly arises in discretizations where the two-point non-conservative term can be expressed as the product of local and symmetric contributions. See, e.g., \cite{rueda2023entropy,rueda2025entropy}.
\item $\numnonconsS{\Jan}_{(j,j)} = \state{0}$.  
This condition is typical of discretizations based on either skew-symmetric two-point non-conservative fluxes, $\numnonconsS{\Jan}_{(i,j)} = -\numnonconsS{\Jan}_{(j,i)}$ (see, e.g., \cite{waruszewski2022entropy,wintermeyer2017entropy,ranocha2017shallow}), or Vol'pert-type fluctuations (see, e.g., \cite{vol1967spaces,Renac2019,coquel2021entropy}).  

\end{enumerate}

\subsection{Subcell limiting}

Following \cite{Pazner2020,RUEDARAMIREZ2022,rueda2024flux,rueda2023monolithic}, our goal is to express the DG discretization of a non-conservative system in the form of an equivalent flux-differencing formula,
\begin{equation}
    m_j \dot{\state{u}}^{\DG}_j = 
    \stateG{\Gamma}^{\DG}_{(j,j-1)}
    -\stateG{\Gamma}^{\DG}_{(j,j+1)}
    ,
    \quad
    j=0, \ldots, N.
\end{equation}
This formulation enables the combination of the so-called staggered fluxes $\stateG{\Gamma}^{\DG}_{(j,k)}$ with low-order fluxes from a finite volume (FV) method, thereby allowing the enforcement of physics admissibility and non-oscillatory properties.

A common strategy for constructing bounds-preserving schemes within a high-order DG framework is to blend DG and FV fluxes through a convex combination,  
\begin{equation}\label{eq:Subcell_blend2}  
    \stateG{\Gamma}_{(a,b)} = (1-\alpha_{(a,b)}) \stateG{\Gamma}^{\DG}_{(a,b)} + \alpha_{(a,b)} \stateG{\Gamma}^{\FV}_{(a,b)},  
\end{equation}  
which defines the hybrid DG/FV flux. Substituting this into the semi-discrete formulation yields the hybrid scheme  
\begin{equation} \label{eq:Subcell_blend}  
m_j \dot{\state{u}}_{j} =  
\left(  
  \stateG{\Gamma}_{(j,j-1)}  
- \stateG{\Gamma}_{(j,j+1)}  
\right),  
\end{equation}  
where the blending coefficient $\alpha_{(a,b)}$ is independently chosen for each interface $(a,b)$ between adjacent nodes. These coefficients can be adapted to enforce positivity, entropy conditions, or non-oscillatory behavior. 
These blending coefficients $\alpha_{(a,b)}$, also known as blending factors, determine the degree of contribution of FV relative to the high-order DG scheme.  

As discussed earlier, the flux-differencing formula for DG discretizations of non-conservative systems introduced in \cite{rueda2024flux} can be applied to well-known discretizations of the shallow-water non-conservative term, such as those of Wintermeyer et al.\ \cite{wintermeyer2017entropy} and Fjordholm et al.\ \cite{fjordholm2011well}, once these have been recast as the product of local and symmetric components.  
However, although the discretizations of \citet{wintermeyer2017entropy} and \citet{fjordholm2011well} are well-balanced for standard DG schemes, the well-balanced property is generally lost when node-wise subcell limiting is applied.  

The loss of well-balancing arises because the individual staggered fluxes in the formula of \citet{rueda2024flux} do not vanish for a well-balanced state when using the local-times-symmetric recast of the fluxes from \citet{wintermeyer2017entropy} and \citet{fjordholm2011well}. Instead, maintaining equilibrium requires that specific terms in $\stateG{\Gamma}_{(j,j-1)}$ cancel with corresponding terms in $\stateG{\Gamma}_{(j,j+1)}$.

To demonstrate the problem, we consider the flux-differencing formula from \cite{rueda2024flux} for a discretization that is individually well-balanced in both the DG and FV formulations. After reformulation in a local-times-symmetric form, the \textit{difference} between staggered fluxes that are evaluated at the lake-at-rest steady state (denoted as $\tilde{\stateG{\Gamma}}$) exactly cancel in each subcell
\begin{equation}\label{eq:wb_property_dg_fv}
	\tilde{\stateG{\Gamma}}_{(j,j-1)}^{\DG} - \tilde{\stateG{\Gamma}}_{(j,j+1)}^{\DG} = \tilde{\stateG{\Gamma}}_{(j,j-1)}^{\FV} - \tilde{\stateG{\Gamma}}_{(j,j+1)}^{\FV} = 0, \quad j=0,...,N.
\end{equation}
Now applying the hybrid scheme \eqref{eq:Subcell_blend} with blended hybrid fluxes \eqref{eq:Subcell_blend2} for the same steady-state conditions and using the well-balanced property \eqref{eq:wb_property_dg_fv} we obtain 
\begin{align}\label{eq:hybrid_flux_balance}
	\tilde{\stateG{\Gamma}}_{(j,j-1)} - \tilde{\stateG{\Gamma}}_{(j,j+1)} =& 
    \left( 1 - \alpha_{(j,j+1)} \right)\tilde{\stateG{\Gamma}}_{(j,j+1)}^{\DG} - 
    \left( 1 - \alpha_{(j,j-1)} \right)\tilde{\stateG{\Gamma}}_{(j,j-1)}^{\DG} 
    + \alpha_{(j,j-1)}\tilde{\stateG{\Gamma}}_{(j,j-1)}^{\FV} - \alpha_{(j,j+1)}\tilde{\stateG{\Gamma}}_{(j,j+1)}^{\FV},
\end{align}
where $j=0,...,N$.
In general, \eqref{eq:hybrid_flux_balance} is non-zero because unequal node-wise blending coefficients do not cancel between adjacent staggered fluxes.
Consequently, the only strategy available to date for achieving shock capturing and enforcing bounds-preserving properties in hybrid FV/DG discretizations of the shallow water system has been the less local element-wise limiting approach, as introduced in \cite{Hennemann2020,Rueda-Ramirez2020,Rueda-Ramirez2021}. In that case, an element-wise choice of the blending coefficient ($\alpha_{j,j-1} = \alpha_{j,j+1}$), together with the well-balanced property \eqref{eq:wb_property_dg_fv} guarantees exact cancellation of the hybrid staggered fluxes in \eqref{eq:hybrid_flux_balance}.

On the other hand, if well-balancedness is to hold for arbitrary nodal blending coefficients, each staggered DG flux must vanish individually at the node level. 
It is clear that the staggered fluxes introduced in \citet{rueda2024flux} are not applicable for this purpose, as the local-times-symmetric formulation contains no mechanism that enforces node-wise cancellation. Instead, we will demonstrate that a well-balanced method with node-wise subcell-limiting can be achieved by combining a novel flux-differencing formula with a specific reformulation of the shallow water equations.

\section{How to fix subcell limiting} \label{sec:fix-limiting}

As discussed above, enabling a nodal selection of the blending coefficients requires staggered DG fluxes that vanish individually at each interface for a well-balanced state. 
In other words, the well-balancedness must hold locally at the node level, without relying on cancellations between neighboring interfaces.

To achieve this property, we introduce a novel flux-differencing formulation based on a local-times-jump discretization of the non-conservative term. 
The key idea is to construct a non-conservative numerical contribution that is directly proportional to the jump of the total water height,
$$
H = h + b,
$$
where $h$ denotes the fluid depth and $b$ the bottom topography.

More precisely, we seek a formulation in which the numerical non-conservative term is proportional to
$$
\jump{H}_{(L,R)} := H_R - H_L.
$$
For a well-balanced state (e.g., lake-at-rest), the total height is constant and thus $\jump{H}_{(L,R)} = 0$. 
Consequently, each staggered flux vanishes individually at equilibrium, ensuring node-wise well-balancedness and making subcell limiting with arbitrary nodal blending coefficients admissible.

Obtaining such a structure requires a suitable reformulation of the shallow water equations. This reformulation is presented in Section~\ref{sec:reformulation_swe}. 
Based on it, we then derive a compatible flux-differencing formula in Section~\ref{sec:fluxdiff}, which guarantees the desired local cancellation property.

\subsection{Reformulation of the shallow water equations}\label{sec:reformulation_swe}

In order to construct a non-conservative term that explicitly involves the derivative of the total water height, $H = h + b$, we first rewrite the shallow water equations in a suitable form. 
The resulting formulation will later allow us to design a flux-differencing discretization in which the non-conservative contribution is proportional to the jump in $H$.

We consider the shallow water system \eqref{eq:swe_std} in one spatial dimension and rewrite it as
\begin{equation}\label{eq:swe}
    \bigpartialderiv{}{t}
    \begin{pmatrix}
        h \\
        h v
    \end{pmatrix}
    +
    \bigpartialderiv{}{x}
    \begin{pmatrix}
        h v \\
        h v^2
    \end{pmatrix}
    +
    g h \, 
    \bigpartialderiv{}{x}
    \begin{pmatrix}
        0 \\
        h + b
    \end{pmatrix}
    =
    \begin{pmatrix}
        0 \\
        0
    \end{pmatrix},
\end{equation}
where $h$ denotes the fluid depth, $v$ the velocity, $g$ the gravitational acceleration, and $b$ the bottom topography.

To obtain this representation, we employ a split form of the pressure term, which is commonly written in conservative form inside the flux divergence. 
In particular, we use the identity
$$
\bigpartialderiv{}{x} \left( \frac{1}{2} g h^2 \right)
=
g h \, \bigpartialderiv{h}{x},
$$
so that the pressure contribution can be expressed as a product of $g\,h$ and the gradient of $h$.
This manipulation enables us to combine the pressure gradient and the topography source term into a single term proportional to $\partial_x (h + b)$, i.e., the derivative of the total water height.

A closely related reformulation was previously introduced by \citet{ersing2025entropy} in the context of the multilayer shallow water equations, where it was used to construct entropy-stable and well-balanced DG discretizations.
For a single fluid layer in one spatial dimension, the entropy-conserving and well-balanced two-point fluxes proposed by \citet{ersing2025entropy} reduce to
\begin{equation}
\begin{aligned}\label{eq:ersing_fluxes}
    \state{f}^{*}_{(j,k)} = & 
    \begin{pmatrix}
        \avg{h v}_{(j,k)} \\
        \avg{h v}_{(j,k)} \avg{v}_{(j,k)}
    \end{pmatrix},
    \quad
    \numnonconsS{\Jan}_{(j,k)} = &
    \begin{pmatrix}
        0 \\
        \frac{1}{2} g h_j \jump{h + b}_{(j,k)}
    \end{pmatrix},
\end{aligned}
\end{equation}
where $\avg{\cdot}_{(j,k)}$ denotes the arithmetic average between the states at nodes $j$ and $k$.

When the two-point fluxes \eqref{eq:ersing_fluxes} are inserted into the split-form DGSEM formulation \eqref{eq:DGSEM}, the resulting semi-discrete scheme is both entropy stable and well-balanced. In particular, for a lake-at-rest equilibrium, characterized by constant total height and vanishing velocity, the jump $\jump{h+b}_{(j,k)}$ is zero for every two-point connection. 
Consequently, both the conservative and non-conservative contributions satisfy $\state{f}^{*}_{(j,k)} = \numnonconsS{\Jan}_{(j,k)} = \state{0}$ locally, i.e., at the level of each individual node pair.

\subsection{A new flux-differencing formula} \label{sec:fluxdiff}

In this section, we introduce a novel flux-differencing formula designed to vanish locally for equilibrium states of systems of balance laws.  
The new formula shares several features with the approach of \citet{rueda2024flux}, but requires the non-conservative two-point flux to take a specific form: it must be expressed as the product of a local contribution and the jump of a relevant physical quantity.  
To ensure equilibrium preservation (i.e., well-balancing), the latter term is required to vanish when the system is in an equilibrium state, as is the case for \eqref{eq:ersing_fluxes}.

Given the similarities between the new subcell limiting formula and the one proposed in \cite{rueda2024flux}, we highlight the new terms in \change{orange} to facilitate direct comparison.  

\begin{proposition}\label{prop:fluxdiff}
It is possible to rewrite \eqref{eq:DGSEM} as a flux-differencing formula,
\begin{equation} \label{eq:fluxdiff}
    m_j \dot{\state{u}}^{\DG}_j = 
    \stateG{\Gamma}^{\DG}_{(j,j-1)}
    -\stateG{\Gamma}^{\DG}_{(j,j+1)},
    \quad
    j=0, \ldots, N,
\end{equation}
where the indices $j=-1$ and $j=N+1$ refer to the outer states (across the left and right boundaries, respectively) and $\stateG{\Gamma}^{\DG}_{(j,k)}$ is the so-called staggered (or telescoping) ``flux'' between node $j$ and the \textbf{adjacent} node $k$,
if it is possible to write the volume numerical non-conservative term as a product of a local contribution and a jump term,
\begin{equation} \label{eq:condition}
    \numnonconsS{\Jan}_{(j,k)} := \Jan^{\mathrm{loc}}_j \circ \, \change{\Jan^{\sjump}_{(j,k)}},
\end{equation}
where $\Jan^{\mathrm{loc}}_j := \Jan^{\mathrm{loc}} (\state{u}_j)$ only depends on local quantities, and  $\Jan^{\sjump}_{(j,k)} = -\Jan^{\sjump}_{(k,j)}$ is defined as the jump of some quantity between nodes $j$ and k:
\begin{equation}\label{eq:skew_as_jump}
    \Jan^{\sjump}_{(j,k)} = \jump{\state{r}}_{(j,k)} = \state{r}_k - \state{r}_j.
\end{equation}

The staggered fluxes are then defined as
\begin{align}
\stateG{\Gamma}^{\DG}_{(0,-1)}  =& \ \numfluxb{f}_{(0,L)} + \numnonconsD{\Jan}_{(0,L)}, 
\label{eq:leftFlux}
\\
\stateG{\Gamma}^{\DG}_{(j,k)} =&  \sum_{l=0}^{\min(j,k)} \sum_{m=0}^N S_{lm} \state{f}^{*}_{(l,m)}+\Jan^{\mathrm{loc}}_j \circ \sum_{l=0}^{\min(j,k)} \sum_{m=0}^N S_{lm} \Jan^{\sjump}_{(l,m)} 
\change{+ 2 \Jan^{\mathrm{loc}}_j \circ \Jan^{\sjump}_{(j,0)}}, 
& \nonumber\\
&\forall (j,k) = (j,j \pm 1) \setminus \{(0,-1), (N,N+1) \},
\label{eq:internFlux}\\
\stateG{\Gamma}^{\DG}_{(N,N+1)} =& \ \numfluxb{f}_{(N,R)} +  \numnonconsD{\Jan}_{(N,R)}.
\label{eq:rightFlux}
\end{align}
\end{proposition}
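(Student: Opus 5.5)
The plan is to verify \eqref{eq:fluxdiff} directly, node by node, by inserting the staggered fluxes \eqref{eq:leftFlux}--\eqref{eq:rightFlux} and comparing with \eqref{eq:DGSEM}. In \eqref{eq:DGSEM}, after substituting \eqref{eq:condition}, the volume term at node $j$ reads
\[
\sum_m S_{jm}\state{f}^*_{(j,m)} + \Jan^{\sloc}_j\circ\sum_m S_{jm}\Jan^{\sjump}_{(j,m)},
\]
where the local factor can be pulled out of the sum over $m$ because it depends only on $j$. I will distinguish three cases: interior nodes $0<j<N$, the left node $j=0$, and the right node $j=N$.

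For $0<j<N$, the flux $\stateG{\Gamma}^{\DG}_{(j,j-1)}$ carries the partial sum over $l\le j-1$, and $\stateG{\Gamma}^{\DG}_{(j,j+1)}$ carries the partial sum over $l\le j$. Both fluxes use the same local factor $\Jan^{\sloc}_j$, since the subscript $j$ is the first index in each. Hence the orange terms $2\Jan^{\sloc}_j\circ\Jan^{\sjump}_{(j,0)}$ are identical and cancel. The telescoping difference of the two partial sums leaves exactly
\[
-\sum_m S_{jm}\left(\state{f}^*_{(j,m)}+\Jan^{\sloc}_j\circ\Jan^{\sjump}_{(j,m)}\right),
\]
which is minus the volume term. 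This matches \eqref{eq:DGSEM}, where the surface term vanishes for interior nodes.

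For $j=0$, \eqref{eq:leftFlux} gives the left surface contribution. The flux $\stateG{\Gamma}^{\DG}_{(0,1)}$ contains the single row $l=0$ of the sums, and its orange term vanishes because $\Jan^{\sjump}_{(0,0)}=\state{r}_0-\state{r}_0=\state{0}$. The signs then match $+\delta_{j0}(\numfluxb{f}_{(0,L)}+\numnonconsD{\Jan}_{(0,L)})$ minus the volume term.

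The main step is $j=N$. Here $\stateG{\Gamma}^{\DG}_{(N,N-1)}$ contains the sums over $l\le N-1$, and I need full-sum identities to convert these into minus the $l=N$ row.

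For the conservative part, $S$ is skew-symmetric and $\state{f}^*$ is symmetric, so $\sum_{l,m}S_{lm}\state{f}^*_{(l,m)}=\state{0}$. The partial sum over $l\le N-1$ therefore equals $-\sum_m S_{Nm}\state{f}^*_{(N,m)}$.

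For the jump part, I will write $\Jan^{\sjump}_{(l,m)}=\state{r}_m-\state{r}_l$ and use the row and column sums of $S=Q-Q^T$ obtained from \eqref{eq:sbp_sum_row}--\eqref{eq:sbp_sum_boundary2}:
\[
\sum_l S_{lm}=\delta_{mN}-\delta_{m0},\qquad \sum_m S_{lm}=-(\delta_{lN}-\delta_{l0}).
\]
These give
\[
\sum_{l,m}S_{lm}\Jan^{\sjump}_{(l,m)}=2(\state{r}_N-\state{r}_0).
\]
Consequently, the partial sum over $l\le N-1$ equals $2(\state{r}_N-\state{r}_0)-\sum_m S_{Nm}\Jan^{\sjump}_{(N,m)}$. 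The orange term $2\Jan^{\sloc}_N\circ\Jan^{\sjump}_{(N,0)}=2\Jan^{\sloc}_N\circ(\state{r}_0-\state{r}_N)$ cancels exactly the residual $2(\state{r}_N-\state{r}_0)$ after multiplication by $\Jan^{\sloc}_N$; this is the reason the orange correction appears in \eqref{eq:internFlux}.

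With \eqref{eq:rightFlux}, the difference $\stateG{\Gamma}^{\DG}_{(N,N-1)}-\stateG{\Gamma}^{\DG}_{(N,N+1)}$ then reproduces minus the volume term minus the right surface term, as required by \eqref{eq:DGSEM}. This completes the proof.
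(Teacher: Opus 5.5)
Your proposal is correct and takes the same route as the paper. It verifies the formula node by node, with the interior and $j=0$ cases telescoping directly. For $j=N$ it uses $\sum_{l,m} S_{lm}\state{f}^{*}_{(l,m)}=\state{0}$ (skew-symmetry of $\mat{S}$, symmetry of $\state{f}^*$) and $\sum_{l,m} S_{lm}\Jan^{\sjump}_{(l,m)} = 2(\state{r}_N-\state{r}_0)$; the orange correction $2\Jan^{\mathrm{loc}}_N\circ\Jan^{\sjump}_{(N,0)}$ cancels the latter exactly.

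You are slightly more explicit than the paper on two steps it leaves implicit. The orange term drops out of $\stateG{\Gamma}^{\DG}_{(0,1)}$ because $\Jan^{\sjump}_{(0,0)}=\state{0}$. At interior nodes it cancels identically between $\stateG{\Gamma}^{\DG}_{(j,j-1)}$ and $\stateG{\Gamma}^{\DG}_{(j,j+1)}$.
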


\begin{proof}
It suffices to evaluate \eqref{eq:fluxdiff} for the boundary nodes, $j=0$ and $j=N$, and for an internal node $j \notin \{0,N\}$.

For the left boundary, $j=0$, we obtain
\begin{align*}
    m_0 \dot{\state{u}}^{\DG}_0 &= 
    \stateG{\Gamma}^{\DG}_{(0,-1)}
    -\stateG{\Gamma}^{\DG}_{(0,1)} \\
    &=
    \numfluxb{f}_{(0,L)} + \numnonconsD{\Jan}_{(0,L)}
    - 
    \sum_{m=0}^N S_{0m} \state{f}^{*}_{(0,m)} 
    - \Jan^{\mathrm{loc}}_0 \circ     \sum_{m=0}^N S_{0m}     \Jan^{\sjump}_{(0,m)}
    \\
\mathrm{(using~eq.~\eqref{eq:condition})} \qquad
    &=
    \numfluxb{f}_{(0,L)} + \numnonconsD{\Jan}_{(0,L)}
    - 
    \sum_{m=0}^N S_{0m} \left( \state{f}^{*}_{(0,m)} + \numnonconsS{\Jan}_{(0,m)} \right),
\end{align*}
which is equivalent to \eqref{eq:DGSEM} for $j=0$.

For an arbitrary internal degree of freedom, $j \notin \{0,N\}$, we obtain
\begin{align*}
    m_j \dot{\state{u}}^{\DG}_j =\ &
    \stateG{\Gamma}^{\DG}_{(j,j-1)}
    -\stateG{\Gamma}^{\DG}_{(j,j+1)}
    \\
    =&
    \sum_{l=0}^{j-1} \sum_{m=0}^N S_{lm} \state{f}^{*}_{(l,m)} 
    + \Jan^{\mathrm{loc}}_j \circ \sum_{l=0}^{j-1} \sum_{m=0}^N S_{lm} \Jan^{\sjump}_{(l,m)}
    - \sum_{l=0}^{j} \sum_{m=0}^N S_{lm} \state{f}^{*}_{(l,m)} 
    - \Jan^{\mathrm{loc}}_j \circ \sum_{l=0}^{j} \sum_{m=0}^N S_{lm} \Jan^{\sjump}_{(l,m)}
    \\
    =&
    - \sum_{m=0}^N S_{jm} \state{f}^{*}_{(j,m)} 
    - \Jan^{\mathrm{loc}}_j \circ \sum_{m=0}^N S_{jm} \Jan^{\sjump}_{(j,m)}
    \\
\mathrm{(using~eq.~\eqref{eq:condition})} \qquad
    =&
    -
    \sum_{m=0}^N S_{jm} \left( \state{f}^{*}_{(j,m)} + \numnonconsS{\Jan}_{(j,m)} \right),
\end{align*}
which is the desired result.

For the right boundary, $j=N$, we obtain
\begin{align}
    m_N \dot{\state{u}}^{\DG}_N =& \ 
    \stateG{\Gamma}^{\DG}_{(N,N-1)}
    -\stateG{\Gamma}^{\DG}_{(N,N+1)} \\
    =& \ 
    \change{2 \Jan^{\mathrm{loc}}_N \circ \Jan^{\sjump}_{(N,0)}}\nonumber\\
    &+ \sum_{l=0}^{N-1} \sum_{m=0}^N S_{lm} \state{f}^{*}_{(l,m)} 
+ \Jan^{\mathrm{loc}}_N \circ \sum_{l=0}^{N-1} \sum_{m=0}^N S_{lm} \Jan^{\sjump}_{(l,m)}
    \nonumber\\
    &- \numfluxb{f}_{(N,R)}  
    - \numnonconsD{\Jan}_{(N,R)}
    \\
    =& \ 
    \change{2 \Jan^{\mathrm{loc}}_N \circ \Jan^{\sjump}_{(N,0)}}+
    \nonumber\\
    &\sum_{l=0}^{N} \sum_{m=0}^N S_{lm} \state{f}^{*}_{(l,m)} 
    + \Jan^{\mathrm{loc}}_N \circ \sum_{l=0}^{N} \sum_{m=0}^N S_{lm} \Jan^{\sjump}_{(l,m)}
    \nonumber\\
    &-\sum_{m=0}^N S_{Nm} \state{f}^{*}_{(N,m)} 
    - \Jan^{\mathrm{loc}}_N \circ \sum_{m=0}^N S_{Nm} \Jan^{\sjump}_{(N,m)}
    \nonumber\\
    &- \numfluxb{f}_{(N,R)}  
    - \numnonconsD{\Jan}_{(N,R)}
    \\
\mathrm{(Skew-symmetry~of~\mat{S}~\&~eq.~\eqref{eq:condition})} \qquad
    =& \ 
    \change{2 \Jan^{\mathrm{loc}}_N \circ \Jan^{\sjump}_{(N,0)}}\nonumber\\
    &+\frac{1}{2} \sum_{l=0}^{N} \sum_{m=0}^N (S_{lm} - S_{ml}) \state{f}^{*}_{(l,m)} 
    + \Jan^{\mathrm{loc}}_N \circ \sum_{l=0}^{N} \sum_{m=0}^N S_{lm} \Jan^{\sjump}_{(l,m)}
    \nonumber\\
    &-\sum_{m=0}^N S_{Nm} \left( \state{f}^{*}_{(N,m)} + \Jan^{\star}_{(N,m)} \right)
    - \numfluxb{f}_{(N,R)}  
    - \numnonconsD{\Jan}_{(N,R)}
    \label{eq:skew_symmetry_S}
    \\
\mathrm{(re-index~\&~symmetry~of~\state{f}^*)} \qquad
    =& \ \change{2 \Jan^{\mathrm{loc}}_N \circ \Jan^{\sjump}_{(N,0)}}
    + \Jan^{\mathrm{loc}}_N \circ \sum_{l=0}^{N} \sum_{m=0}^N S_{lm} \Jan^{\sjump}_{(l,m)}
    \nonumber\\
    &-\sum_{m=0}^N S_{Nm} \left( \state{f}^{*}_{(N,m)} + \Jan^{\star}_{(N,m)} \right)
    - \numfluxb{f}_{(N,R)}  
    - \numnonconsD{\Jan}_{(N,R)}, \label{eq:symmetry_f_vanishes}
\end{align}
which is again exactly equivalent to \eqref{eq:DGSEM} for $j=N$, as the first line of the last right-hand side of \eqref{eq:symmetry_f_vanishes} vanishes due to:
\begin{align}
    \sum_{l=0}^{N} \sum_{m=0}^N S_{lm} \Jan^{\sjump}_{(l,m)}
    =&
    \sum_{l=0}^{N} \sum_{m=0}^N S_{lm} \left( \state{r}_m - \state{r}_l \right)
    \nonumber\\
    =&
    \sum_{m=0}^{N} \state{r}_m  \sum_{l=0}^N S_{lm} 
    -
    \sum_{l=0}^{N} \state{r}_l  {\sum_{m=0}^N S_{lm}}
    \nonumber\\
\mathrm{(Definition~of~\mat{S})} \qquad
    =&
    \sum_{m=0}^{N} \state{r}_m \left( 2\sum_{l=0}^N Q_{lm}
    - \sum_{l=0}^N B_{lm} \right)
    -
    \sum_{l=0}^{N} \state{r}_l  \left({2\sum_{m=0}^N Q_{lm}}
    -  {\sum_{m=0}^N B_{lm}}
    \right)
    \nonumber\\
\mathrm{(eqs.~\eqref{eq:sbp_sum_row}~\&~\eqref{eq:sbp_sum_boundary2})} \qquad
    =&
    \sum_{m=0}^{N} \state{r}_m \left( 2 
    \underbrace{\sum_{l=0}^N Q_{lm}}_{:= \delta_{mN} - \delta_{m0}}
    - \underbrace{\sum_{l=0}^N B_{lm}}_{:= \delta_{mN} - \delta_{m0}} \right)
    -
    \sum_{l=0}^{N} \state{r}_l  \left({2\cancel{\sum_{m=0}^N Q_{lm}}}
    -  \underbrace{\sum_{m=0}^N B_{lm}}_{:= \delta_{lN} - \delta_{l0}}
    \right)
    \nonumber\\
    =& \ 
    2 \left(\state{r}_N - \state{r}_0 \right)
    \nonumber\\
    =& \ 
    2 \Jan^{\sjump}_{(0,N)}.
    \nonumber\\
    =&
    -2 \Jan^{\sjump}_{(N,0)}.
    \label{eq:sum_SPhijump}
\end{align}

\end{proof}

\begin{corollary}\label{cor:nodewise_balance}
    The main advantage of the staggered DG fluxes defined in \eqref{eq:leftFlux}-\eqref{eq:rightFlux} is that, besides being consistent with the high-order DGSEM formulation \eqref{eq:DGSEM}, they vanish for equilibrium states satisfying 
$$
\state{f}^{*}_{(j,k)} =\Jan^{\sjump}_{(j,k)} = \state{0}.
$$
Consequently, every staggered contribution in \eqref{eq:fluxdiff} is zero at equilibrium, and the discrete scheme preserves such states exactly.
\end{corollary}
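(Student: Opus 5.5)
The plan is to read off the interior staggered fluxes \eqref{eq:internFlux} term by term, then handle the two boundary fluxes \eqref{eq:leftFlux} and \eqref{eq:rightFlux} separately. Every interior staggered flux $\stateG{\Gamma}^{\DG}_{(j,k)}$ with $(j,k)=(j,j\pm1)$ is a sum of three pieces: a double sum of $S_{lm}\state{f}^{*}_{(l,m)}$; the local factor $\Jan^{\mathrm{loc}}_j$ multiplied (Hadamard) by a double sum of $S_{lm}\Jan^{\sjump}_{(l,m)}$; and the correction term $2\Jan^{\mathrm{loc}}_j\circ\Jan^{\sjump}_{(j,0)}$.

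Assume the equilibrium hypothesis $\state{f}^{*}_{(l,m)}=\Jan^{\sjump}_{(l,m)}=\state{0}$ for all pairs $(l,m)$ in the element. Then the first double sum vanishes termwise. The second double sum also vanishes termwise, and since $\Jan^{\mathrm{loc}}_j$ is finite, its Hadamard product with $\state{0}$ is $\state{0}$. The correction term is zero because it is proportional to the single jump $\Jan^{\sjump}_{(j,0)}$. Hence each interior staggered flux vanishes individually, for every $j$ and without any cancellation between neighbouring interfaces. This is precisely what the local-times-jump structure \eqref{eq:condition} was designed to give, in contrast with the local-times-symmetric recast discussed around \eqref{eq:hybrid_flux_balance}.

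For the boundary staggered fluxes, I would use the surface numerical flux and the non-conservative surface term. At a lake-at-rest state these also vanish for the fluxes of interest, e.g.\ \eqref{eq:ersing_fluxes} together with a compatible well-balanced interface flux. The reason is that the conservative part reduces to $(hv,\,hv\,v)=\state{0}$ when $v=0$, and the non-conservative surface term is again proportional to $\jump{h+b}_{(N,R)}=0$. I would state this as the natural interface analogue of the volume hypothesis, read into ``equilibrium states'': the surface pair $(\numfluxb{f},\numnonconsD{\Jan})$ is assumed to satisfy the same vanishing property across element interfaces. Of all the steps, this requires the most care, since the corollary's hypothesis is phrased only for the volume two-point quantities.

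Finally, I would insert these zeros into the hybrid scheme \eqref{eq:Subcell_blend}. Assuming the FV fluxes also vanish individually at equilibrium (a standard property of well-balanced FV fluxes in this local-jump form), every blended flux \eqref{eq:Subcell_blend2} equals $(1-\alpha)\state{0}+\alpha\state{0}=\state{0}$ for arbitrary $\alpha_{(a,b)}$. Therefore $m_j\dot{\state{u}}_j=0$ for all $j$. By Proposition~\ref{prop:fluxdiff} the DG case coincides with \eqref{eq:DGSEM}, so the pure DG scheme preserves the state exactly as well.
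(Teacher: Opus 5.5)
Your proposal is correct and follows the same route as the paper. The paper justifies the corollary simply by inspecting \eqref{eq:internFlux}: each of its three pieces is proportional to either $\state{f}^{*}$ or $\Jan^{\sjump}$, so it vanishes node pair by node pair with no cancellation between interfaces. Your explicit remark that the boundary fluxes \eqref{eq:leftFlux}, \eqref{eq:rightFlux} and the FV fluxes need the analogous interface property makes precise a point the paper leaves tacit; the paper's surface fluxes, built from the same two-point terms plus dissipation proportional to the jump in entropy variables, do satisfy it at lake-at-rest.
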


The following remarks are in order.
\begin{remark}
In complete analogy to the staggered fluxes proposed by \citet{rueda2024flux}, in the purely conservative case, i.e., when $\noncon = \stateG{\Phi} = \state{0}$, the expressions \eqref{eq:leftFlux}--\eqref{eq:rightFlux} reduce to the telescoping fluxes introduced by \citet{Fisher2013a}. These fluxes are symmetric in their indices,
$$
\stateG{\Gamma}^{\DG}_{(j,k)} = \stateG{\Gamma}^{\DG}_{(k,j)},
$$
and therefore define a locally conservative discretization at the node level.
\end{remark}

\begin{remark}\label{remark:alternative}
There exist alternative flux-differencing formulations for non-conservative terms in local-times-jump form which, when inserted into \eqref{eq:fluxdiff}, also recover the high-order DGSEM discretization \eqref{eq:DGSEM}. 
For instance, consider the staggered DG fluxes
    \begin{align}
\stateG{\Gamma}^{a}_{(0,-1)}  =& \ \numfluxb{f}_{(0,L)} + \numnonconsD{\Jan}_{(0,L)},
\label{eq:leftFlux_alternative}
\\
\stateG{\Gamma}^{a}_{(j,k)} =& \sum_{l=0}^{\min(j,k)} \sum_{m=0}^N S_{lm} \state{f}^{*}_{(l,m)}+\Jan^{\mathrm{loc}}_j \circ \sum_{l=0}^{\min(j,k)} \sum_{m=0}^N S_{lm} \Jan^{\sjump}_{(l,m)}, 
& \nonumber
\\
& \forall (j,k) \setminus \{(0,-1), (N,N+1), (N,N-1) \},
\\
\stateG{\Gamma}^{a}_{(N,N-1)} =&  
     \sum_{l=0}^{N-1} \sum_{m=0}^N S_{lm} \state{f}^{*}_{(l,m)}
    +\Jan^{\mathrm{loc}}_N \circ \sum_{l=0}^{N-1} \sum_{m=0}^N S_{lm} \Jan^{\sjump}_{(l,m)} 
\change{+ 2 \Jan^{\mathrm{loc}}_N \circ \Jan^{\sjump}_{(N,0)}}, 
\\
\stateG{\Gamma}^{a}_{(N,N+1)} =& \ \numfluxb{f}_{(N,R)} +  \numnonconsD{\Jan}_{(N,R)}.
\label{eq:rightFlux_alternative}
\end{align}
Repeating the arguments in the proof of Proposition~\ref{prop:fluxdiff} shows that these fluxes also yield the high-order DGSEM formulation \eqref{eq:DGSEM} when substituted into \eqref{eq:fluxdiff}. 

However, as demonstrated in Section~\ref{sec:Application}, this alternative choice leads to asymmetric numerical solutions, even for symmetric initial and boundary data, and may produce spurious over- and undershoots. A more detailed discussion is provided in Section~\ref{sec:blast} and Remark~\ref{remark:criteria}.
\end{remark}

\begin{remark}\label{remark:criteria}
    Extensive numerical experiments suggest that suitable staggered DG fluxes should satisfy a property that we refer to as \emph{pairwise symmetry at interfaces}. 
    Specifically, for every pair of nodes $(j,k)$, the staggered fluxes $\stateG{\Gamma}_{(j,k)}$ and $\stateG{\Gamma}_{(k,j)}$ should be constructed from the same algebraic expression, differing only through the interchange of the node indices. 
    In particular, no additional correction terms should appear in one orientation but not the other.

    This property ensures consistency between the staggered flux evaluated from node $j$ to node $k$ and that evaluated from node $k$ to node $j$, thereby preventing orientation-dependent behavior at shared interfaces.
    Such symmetry appears to be essential for obtaining robust and non-oscillatory numerical solutions.

    It is straightforward to verify that the staggered fluxes defined in \eqref{eq:leftFlux}--\eqref{eq:rightFlux} satisfy pairwise symmetry at interfaces.
    In contrast, the alternative formulation introduced in Remark~\ref{remark:alternative}, given by \eqref{eq:leftFlux_alternative}--\eqref{eq:rightFlux_alternative}, violates this property because $\stateG{\Gamma}^a_{(N-1,N)}$ and $\stateG{\Gamma}^a_{(N,N-1)}$ are computed using different algebraic expressions.
\end{remark}
\begin{remark}
    The staggered fluxes introduced in this work satisfy a property that we term \emph{invariance under index relabeling}.
    Specifically, consider reversing the nodal ordering within an element from $\{0,\ldots,N\}$ to $\{N,\ldots,0\}$ (see Figure~\ref{fig:labeling}). 
    Under this relabeling, the staggered DG fluxes remain unchanged.
    In other words, their values are independent of the orientation of the local node numbering.
    This property ensures that the resulting multidimensional discretization is independent of the particular indexing induced by mesh generation, thereby eliminating orientation-dependent artifacts.

    To verify this property for the fluxes defined in \eqref{eq:leftFlux}--\eqref{eq:rightFlux}, we rewrite \eqref{eq:internFlux} using a right-to-left indexing convention. 
    Figure~\ref{fig:labeling} illustrates the corresponding relabeling,
$$
i \leftarrow (N-j), \qquad
q \leftarrow (N-k), \qquad
p \leftarrow (N-l), \qquad
n \leftarrow (N-m),
$$
which induces a transformed storage of the fluxes and non-conservative terms. 
We then manipulate the expression for $\Gamma_{(i,q)}$ under right-to-left indexing and recover the staggered flux definition associated with left-to-right indexing.

To this end, all quantities except for the skew-symmetric differentiation matrix $\mat{S}$ are reindexed, yielding
    \begin{figure}
        \centering
        \begin{tikzpicture}[
    scale=1.2,
    >=latex,
    font=\small,
    every node/.style={inner sep=1pt}
]

\def\L{4.5}      
\def\gap{1.8}    
\def\y{0}

\pgfmathsetmacro{\shift}{\L+\gap}

\def\xA{0.00}
\def\xB{0.78}
\def\xC{2.25}
\def\xD{3.72}
\def\xE{4.50}

\draw[line width=0.9pt] (0,\y) -- (\L,\y);

\foreach \x/\label in {
    \xA/0,
    \xB/1,
    \xC/2,
    \xD/\ldots,
    \xE/N
}{
    \filldraw[black] (\x,\y) circle (2pt);
    \node[below=4pt] at (\x,\y) {\label};
}

\node[align=center] at (2.25,1.25)
{\textbf{Left to right ($L \rightarrow R$)}\\
$j,k,l,m$ indices};

\begin{scope}[shift={(\shift,0)}]

    \draw[line width=0.9pt] (0,\y) -- (\L,\y);

    \foreach \x/\label in {
        \xA/N,
        \xB/\ldots,
        \xC/2,
        \xD/1,
        \xE/0
    }{
        \filldraw[black] (\x,\y) circle (2pt);
        \node[below=4pt] at (\x,\y) {\label};
    }

    \node[align=center] at (2.25,1.25)
    {\textbf{Right to left ($R \rightarrow L$)}\\
    $i,q,p,n$ indices};

\end{scope}

\draw[densely dashed, gray]
(\shift-0.9,-0.6) -- (\shift-0.9,1.8);

\end{tikzpicture}
        \caption{Two different labeling possibilities for the Legendre-Gauss-Lobatto nodes in one dimension.}
        \label{fig:labeling}
    \end{figure}
    \begin{align}
        \stateG{\Gamma}^{\DG,R\rightarrow L}_{(i,q)} =&  
        -\sum_{p=0}^{\min(i,q)} \sum_{n=0}^N S_{pn} \state{f}^{*}_{(p,n)}
        -\Jan^{\mathrm{loc}}_i \circ \sum_{p=0}^{\min(i,q)} \sum_{n=0}^N S_{pn} \Jan^{\sjump}_{(p,n)} 
        - 2 \Jan^{\mathrm{loc}}_i \circ \Jan^{\sjump}_{(i,0)}
        \label{eq:RtoLindexing}
        \\
        \mathrm{(Convert~to~}L\rightarrow R\mathrm{~indexing)} \qquad
        =&
        -\sum_{l=N}^{\max(j,k)} \sum_{m=N}^0 S_{N-l,N-m} \state{f}^{*}_{(l,m)}
        -\Jan^{\mathrm{loc}}_j \circ \sum_{l=N}^{\max(j,k)} \sum_{m=N}^0 S_{N-l,N-m} \Jan^{\sjump}_{(l,m)} 
        \nonumber \\
        & \ - 2 \Jan^{\mathrm{loc}}_j \circ \Jan^{\sjump}_{(j,N)},
        \\
        \mathrm{(Skew\hspace{-0.1cm}-\hspace{-0.1cm}centrosym.~of~\mat{S}~ \eqref{eq:skew_centrosymmetry})} \qquad
        =&
        \sum_{l=\max(j,k)}^{N} \sum_{m=0}^N S_{lm} \state{f}^{*}_{(l,m)}
        +\Jan^{\mathrm{loc}}_j \circ \sum_{l=\max(j,k)}^{N} \sum_{m=0}^N S_{lm} \Jan^{\sjump}_{(l,m)} 
        - 2 \Jan^{\mathrm{loc}}_j \circ \Jan^{\sjump}_{(j,N)},
        \\
        \mathrm{(Complete~sum~\&~\eqref{eq:skew_symmetry_S},\eqref{eq:symmetry_f_vanishes})} \qquad
        =&  
        -\sum_{l=0}^{\min(j,k)} \sum_{m=0}^N S_{lm} \state{f}^{*}_{(l,m)}
        -\Jan^{\mathrm{loc}}_j \circ \sum_{l=0}^{\min(j,k)} \sum_{m=0}^N S_{lm} \Jan^{\sjump}_{(l,m)}  
        - 2 \Jan^{\mathrm{loc}}_j \circ \Jan^{\sjump}_{(j,N)}
        \nonumber\\
        &+\underbrace{\sum_{l=0}^{N} \sum_{m=0}^N S_{lm} \state{f}^{*}_{(l,m)}}_{=0}
        + \ \Jan^{\mathrm{loc}}_j \circ \sum_{l=0}^{N} \sum_{m=0}^N S_{lm} \Jan^{\sjump}_{(l,m)},
        \\
        \mathrm{(Eq.~\eqref{eq:sum_SPhijump})} \qquad
        =&  
        -\sum_{l=0}^{\min(j,k)} \sum_{m=0}^N S_{lm} \state{f}^{*}_{(l,m)}
        -\Jan^{\mathrm{loc}}_j \circ \sum_{l=0}^{\min(j,k)} \sum_{m=0}^N S_{lm} \Jan^{\sjump}_{(l,m)}  
        - 2 \Jan^{\mathrm{loc}}_j \circ \Jan^{\sjump}_{(j,N)}
        \nonumber\\
        &
        - 2 \Jan^{\mathrm{loc}}_j \circ  \Jan^{\sjump}_{(N,0)},
        \\
        =&  
        -\sum_{l=0}^{\min(j,k)} \sum_{m=0}^N S_{lm} \state{f}^{*}_{(l,m)}
        -\Jan^{\mathrm{loc}}_j \circ \sum_{l=0}^{\min(j,k)} \sum_{m=0}^N S_{lm} \Jan^{\sjump}_{(l,m)}  
        - 2 \Jan^{\mathrm{loc}}_j \circ  \Jan^{\sjump}_{(j,0)},
        \\
        \mathrm{(Eq.~\eqref{eq:internFlux})} \qquad
        =& \ 
        -\stateG{\Gamma}^{\DG}_{(j,k)}.
    \end{align}
    In \eqref{eq:RtoLindexing}, all terms are multiplied with a minus sign because the fluxes and non-conservative terms are evaluated in a reference space direction that is opposite to the physical coordinate direction.
    When the right-to-left staggered fluxes are inserted into the the corresponding  right-to-left flux-differencing formula, the DGSEM discretization is recovered.
    
    Both the staggered fluxes introduced in \eqref{eq:leftFlux}--\eqref{eq:rightFlux} and the alternative formulation proposed in Remark~\ref{remark:alternative}, given by \eqref{eq:leftFlux_alternative}--\eqref{eq:rightFlux_alternative}, satisfy invariance under index relabeling.

\end{remark}

\section{Numerical Results}\label{sec:Application}

We apply the newly derived node-wise limiting strategy to the two-dimensional shallow water equations with non-constant bottom topography.
The spatial discretization is based on a high-order split-form DGSEM equipped with subcell finite volume limiting capabilities.
Details of the two-dimensional extension of the general numerical framework, along with the specific numerical fluxes and non-conservative term discretizations employed in the present subcell strategy, are provided in Appendices~\ref{app:2d} and \ref{app:swe_2d}, respectively.

We have made available this variant of the DGSEM and stable limiting in the open-source solvers Trixi.jl
\cite{ranocha2022adaptive,schlottkelakemper2021purely} and TrixiShallowWater.jl \cite{winters2025trixi}.
For time integration we use CFL-based time stepping with the three-stage, third-order explicit strong stability preserving (SSP) Runge-Kutta method of Shu and Osher \cite{shu1988efficient}.
The unstructured quadrilateral meshes were constructed
with gmsh \cite{geuzaine2009gmsh} and HOHQMesh \cite{kopriva2024hohqmesh:joss,kopriva2024hohqmeshjl}.
We use ParaView \cite{ahrens2005paraview} 
and Makie.jl \cite{Makie2021} to visualize the results.
The source code needed to reproduce the numerical experiments is
available online in our reproducibility repository~\cite{rueda2026numericalRepro}.

\subsection{Convergence test}
First, we demonstrate that the novel flux-differencing formula \eqref{eq:fluxdiff} preserves the spatial order of accuracy of the corresponding DGSEM formulation \eqref{eq:DGSEM}. 
To this end we apply the method of manufactured solutions, where the smooth and exact reference solution
\begin{equation}
    \begin{aligned}
        H_{\text{MS}}(\vec{x},t) &= 4 + 0.2\cos\left(\pi x + t\right) + 0.2\cos\left(\pi y + t\right),\\
        v_{1,\text{MS}}(\vec{x}, t) &= v_{2,\text{MS}}(\vec{x}, t) =  0.5 ,\\
        b(\vec{x})_{\text{MS}} &= 1 + 0.2\cos\left(\pi x \right) + 0.2\cos\left(\pi y \right),
    \end{aligned}
\end{equation}
is prescribed and substituted into the system \eqref{eq:swe2d} to derive a corresponding source term that is computed symbolically.

The flux-differencing DGSEM formulation (see \eqref{eq:fluxdiff} for the one-dimensional case and \eqref{eq:fluxdiff2d} for the two-dimensional case) is evolved in time over the domain $\Omega = [-1,1]^2$ using a sequence of curvilinear meshes. 
These meshes are generated by applying the mapping
\begin{equation}
    \vec{X} (\xi, \eta) = \begin{pmatrix}
        \xi + 0.1\,\sin\left(\pi \eta\right)\cos\left(0.5\pi \xi\right) \\
        \eta + 0.1\,\sin\left(\pi \xi\right)\cos\left(0.5\pi \eta\right)
    \end{pmatrix}
\end{equation}
to a Cartesian mesh defined on our computational domain $\Omega$, followed by interpolation of the mapped node locations using a polynomial mapping of degree~$3$. The mesh sequence consists of Cartesian grids with $N_{\mathrm{elem}}=\{16,\,64,\,256,\,1024\}$ elements.

The numerical solutions are computed using a polynomial degree $N=3$ and a fixed time step $\Delta t = 5 \cdot 10^{-4}$. 
We impose periodic boundary conditions and set the gravitational acceleration to $g = 9.81$.

Table~\ref{tab:convergence:results} reports the $L^2$-errors between the numerical and exact solutions at the final time $t = 0.1$, together with the corresponding experimental orders of convergence (EOC) for all solution variables.
The results confirm that the proposed flux-differencing formulation achieves the expected fourth-order accuracy of the DGSEM scheme (i.e., order $N+1$), thereby validating the accuracy and consistency of the formulations in \eqref{eq:fluxdiff} and \eqref{eq:fluxdiff2d}.

\begin{table}[htb]
    \caption{Convergence results obtained for polynomial degree $3$ at final time $t=0.1$ with fixed time step $\Delta t = 5\cdot{10}^{-4}$}
    \label{tab:convergence:results}
    \centering
    \begin{tabular}{c|cc|cc|cc}
        \toprule
        $N_{elem}$ & $\|h - h_{\text{MS}}\|_{L^2}$ & EOC($h$) & $\|hv_1 - hv_{1,\text{MS}}\|_{L^2}$ & EOC($hv_1$) & $\|hv_2 - hv_{2,\text{MS}}\|_{L^2}$ & EOC($hv_2$) \\
        \midrule
        $16$ 
        & $8.73\cdot10^{-4}$ & -- 
        & $3.02\cdot10^{-3}$ & -- 
        & $3.02\cdot10^{-3}$ & -- \\
        
        $64$
        & $5.59\cdot10^{-5}$ & 3.97 
        & $1.98\cdot10^{-4}$ & 3.93 
        & $1.98\cdot10^{-4}$ & 3.93 \\
        
        $256$
        & $3.38\cdot10^{-6}$ & 4.05 
        & $1.21\cdot10^{-5}$ & 4.04 
        & $1.21\cdot10^{-5}$ & 4.04 \\
        
        $1024$ 
        & $2.12\cdot10^{-7}$ & 4.00 
        & $6.72\cdot10^{-7}$ & 4.17 
        & $6.72\cdot10^{-7}$ & 4.17 \\
        \bottomrule
    \end{tabular}
\end{table}

\subsection{Well-balancedness test}
Next, we verify the well-balanced property of the proposed flux-differencing formulation introduced in Section~\ref{sec:fluxdiff} within the hybrid DG/FV framework, combined with the reformulated fluxes of \citet{ersing2025entropy}. To this end, we consider the following initial condition:
\begin{equation}
    H(\vec{x},0) = 0.45, \quad \vec{v}(\vec{x},0) = \vec{0}, \quad b(\vec{x}) = \begin{cases}
        0.2\left(1 + \cos \left(2.5\pi \|\vec{x}\|_2\right)\right) 
    & \mathrm{if} \|\vec{x}\|_2 \leq 0.4, \\
        0 & \mathrm{otherwise},
    \end{cases}
\end{equation}
which corresponds to a lake-at-rest equilibrium over a smooth bottom topography on the domain $\Omega = [-1,1]^2$.

We impose slip-wall boundary conditions and set the gravitational acceleration to $g=9.81$. 
Numerical results are computed on the curvilinear mesh shown in Figure~\ref{fig:wb:mesh}, consisting of 96 elements. 
Both the geometric mapping of the element edges and the DG approximation space employ polynomial degree $N=3$. 
We set CFL$=0.9$ and integrate until the final time $t=10$.

To rigorously assess well-balancedness in the hybrid DG/FV setting, we prescribe random node-wise blending coefficients, shown in Figure~\ref{fig:wb:alphas}, and compute the interface blending coefficients between two nodes as the maximum between the nodal coefficients,
\begin{equation}\label{eq:interface_alpha}
    \alpha_{(L,R)} = \max(\alpha_L, \alpha_R).
\end{equation}
The blending coefficients are kept constant over the entire simulation.

\begin{figure}[]
    \centering
    \begin{subfigure}{0.32\textwidth}
        \centering
        \includegraphics[width=\linewidth, trim= 4cm 0cm 7cm 0cm , clip]{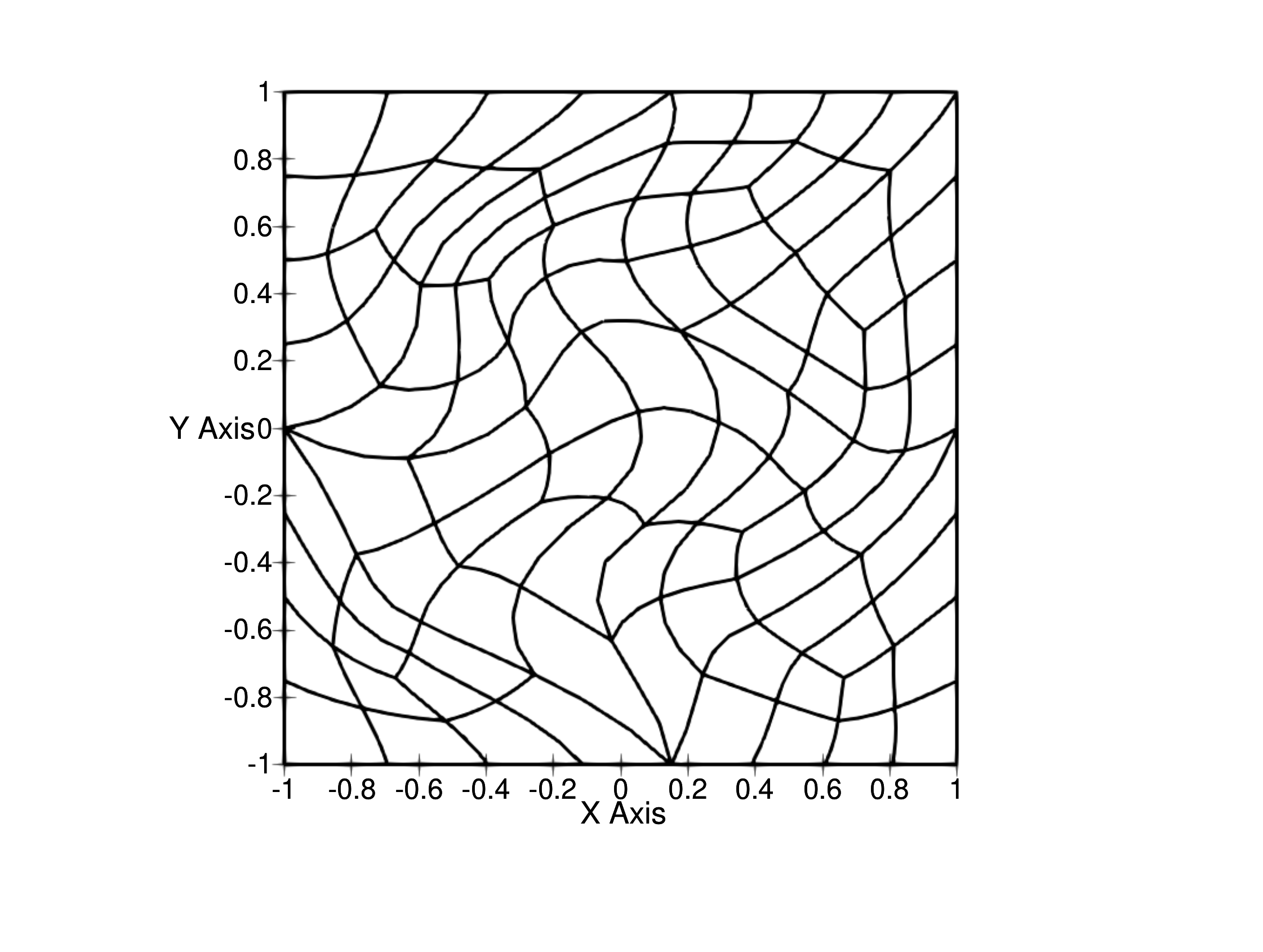}
        \caption{Curvilinear mesh.}
        \label{fig:wb:mesh}
    \end{subfigure}
    \begin{subfigure}{0.32\textwidth}
        \centering
        \includegraphics[width=\linewidth, trim= 4cm 0cm 7cm 0cm , clip]{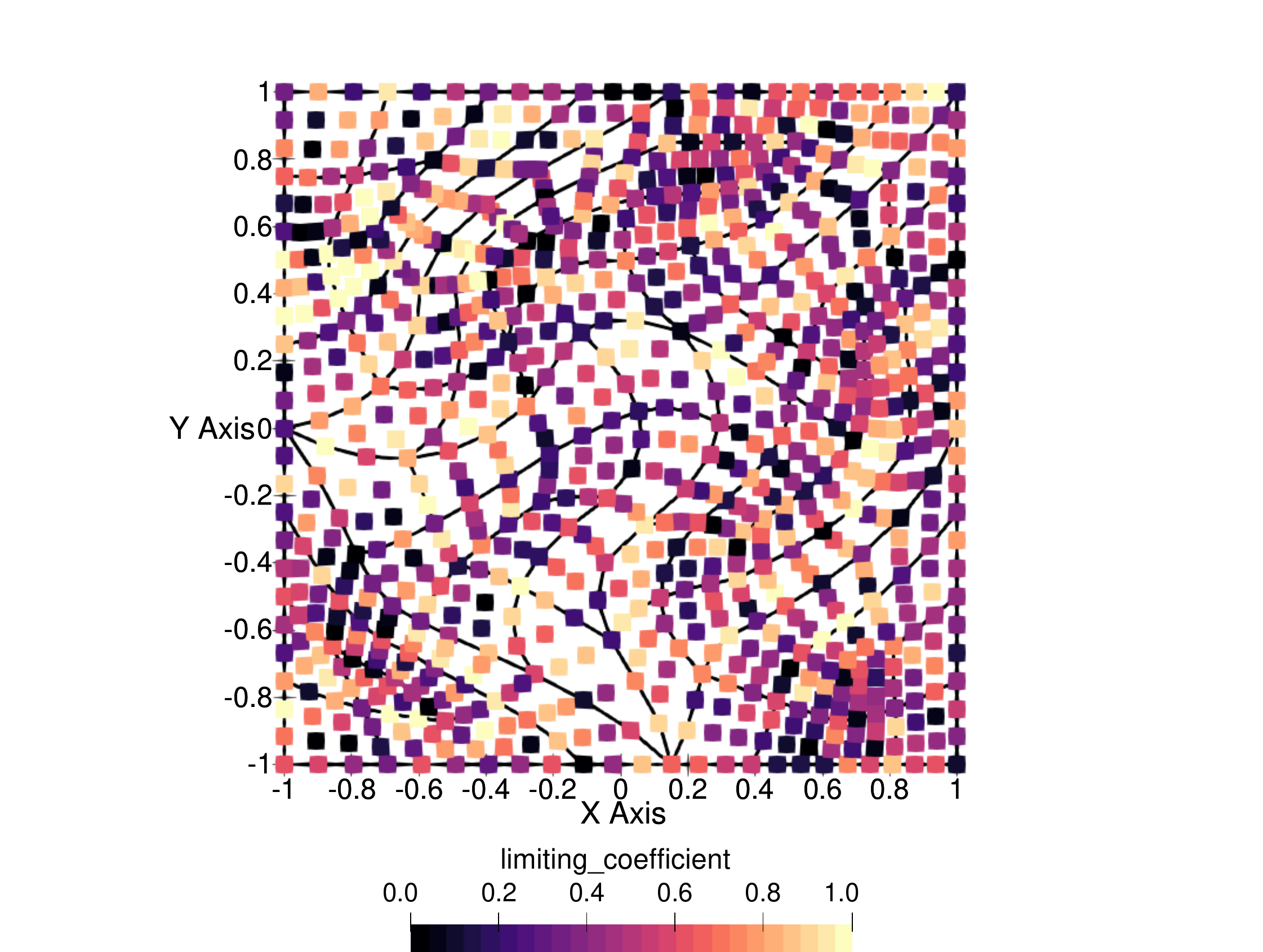}
        \caption{Blending coefficients.}
        \label{fig:wb:alphas}
    \end{subfigure}
    \caption{Curvilinear mesh (a) and random node-wise blending coefficients (b) used for the well-balancedness test.}
    \label{fig:wb:setup}
\end{figure}

We compare the following three numerical schemes:
\begin{enumerate}
    \item[(a)] The well-balanced scheme introduced in Section~\ref{sec:fix-limiting}, which combines the novel flux-differencing formulation with the reformulated fluxes of \citet{ersing2025entropy} (here denoted as \emph{Ersing-jump}). The corresponding two-dimensional fluxes and non-conservative terms are detailed in Appendix~\ref{app:swe_2d}, but are reproduced here for ease of comparison:
\begin{equation}\label{eq:ersing_fluxes_2d}
    \begin{aligned}
   {\state{f}^{1*}} (\state{u}_L, \state{u}_R) = & 
    \begin{pmatrix}
        \avg{h v_1} \\[0.1cm]
        \avg{h v_1} \avg{v_1}\\[0.1cm]
        \avg{h v_1} \avg{v_2}
    \end{pmatrix},
    \quad
     \numnonconsSxi{\tilde{\Jan}} \left(\state{u}_L, \state{u}_R, \left(J\vec{a}^1\right)_L, \left(J\vec{a}^1\right)_R\right) = \begin{pmatrix}
        0 \\[0.1cm] \frac{1}{2}g h_L\left(Ja^1_1\right)_L \\[0.1cm] 
        \frac{1}{2}gh_L\left(Ja^1_2\right)_L
    \end{pmatrix}
    \circ
    \begin{pmatrix}
        0 \\[0.1cm] \jump{h + b} \\[0.1cm] \jump{h + b}
    \end{pmatrix},\\[0.1cm]
    {\state{f}^{2*}} (\state{u}_L, \state{u}_R) = & 
    \begin{pmatrix}
        \avg{h v_2} \\[0.1cm]
        \avg{h v_1}\avg{v_2}\\[0.1cm]
        \avg{h v_2}\avg{v_2}
    \end{pmatrix},
    \quad
    \numnonconsSeta{\tilde{\Jan}} \left(\state{u}_L, \state{u}_R, \left(J\vec{a}^2\right)_L, \left(J\vec{a}^2\right)_R\right) = \begin{pmatrix}
        0 \\[0.1cm] \frac{1}{2}g h_L\left(Ja^2_1\right)_L \\[0.1cm] 
        \frac{1}{2}gh_L\left(Ja^2_2\right)_L
    \end{pmatrix}
    \circ
    \begin{pmatrix}
        0 \\[0.1cm] \jump{h + b} \\[0.1cm] \jump{h + b}
    \end{pmatrix}.
    \end{aligned}
\end{equation}

\item[(b)]  A modified version of the fluxes and non-conservative terms of \citet{wintermeyer2017entropy}, in which the non-conservative contribution is reformulated in local-times-symmetric form (denoted as \emph{Wintermeyer-jump}):
\begin{equation}\label{eq:wintermeyer_local_jump}
    \begin{aligned}
    {\state{f}^{1*}} (\state{u}_L, \state{u}_R) = & 
    \begin{pmatrix}
        \avg{h v_1} \\[0.1cm]
        \avg{h v_1} \avg{v_1}+ \frac{g}{2}h_Lh_R\\[0.1cm]
        \avg{h v_1} \avg{v_2}
    \end{pmatrix},
    \quad
    \numnonconsSxi{\tilde{\Jan}} \left(\state{u}_L, \state{u}_R, \left(J\vec{a}^1\right)_L, \left(J\vec{a}^1\right)_R\right) = \begin{pmatrix}
        0 \\[0.1cm] \frac{1}{2}gh_L \left(Ja^1_1\right)_L \\[0.1cm] \frac{1}{2}gh_L \left(Ja^1_2\right)_L
    \end{pmatrix}
    \circ
    \begin{pmatrix}
        0 \\[0.1cm] \jump{b} \\[0.1cm] \jump{b}
    \end{pmatrix},\\[0.1cm]
    {\state{f}^{2*}} (\state{u}_L, \state{u}_R) = & 
    \begin{pmatrix}
        \avg{h v_2} \\[0.1cm]
        \avg{h v_1} \avg{v_2}\\[0.1cm]
        \avg{h v_2} \avg{v_2} + \frac{g}{2}h_Lh_R
    \end{pmatrix},
    \quad
    \numnonconsSeta{\tilde{\Jan}} \left(\state{u}_L, \state{u}_R, \left(J\vec{a}^2\right)_L, \left(J\vec{a}^2\right)_R\right) = \begin{pmatrix}
        0 \\[0.1cm] \frac{1}{2}gh_L \left(Ja^2_1\right)_L \\[0.1cm] \frac{1}{2}gh_L \left(Ja^2_2\right)_L
    \end{pmatrix}
    \circ
    \begin{pmatrix}
        0 \\[0.1cm] \jump{b} \\[0.1cm] \jump{b}
    \end{pmatrix}.
    \end{aligned}
\end{equation}
Since these fluxes are expressed in local-times-jump form, they are paired with the novel flux-differencing formulation introduced in Section~\ref{sec:fix-limiting}.

\item[(c)] A modified version of the fluxes and non-conservative terms of \citet{wintermeyer2017entropy}, in which the non-conservative contribution is reformulated in local-times-symmetric form (denoted as \emph{Wintermeyer-symmetric}):
\begin{equation}\label{eq:wintermeyer_local_symmetric}
    \begin{aligned}
   {\state{f}^{1*}} (\state{u}_L, \state{u}_R) = & 
    \begin{pmatrix}
        \avg{h v_1} \\[0.1cm]
        \avg{h v_1} \avg{v_1}+ \frac{g}{2}h_Lh_R\\[0.1cm]
        \avg{h v_1} \avg{v_2}
    \end{pmatrix},
    \quad
     \numnonconsSxi{\tilde{\Jan}} \left(\state{u}_L, \state{u}_R, \left(J\vec{a}^1\right)_L, \left(J\vec{a}^1\right)_R\right) = \begin{pmatrix}
        0 \\[0.1cm] gh_L\\[0.1cm] gh_L
    \end{pmatrix}
    \circ
    \begin{pmatrix}
        0 \\[0.1cm] \avg{Ja_1^1}\avg{b} \\[0.1cm] \avg{Ja_2^1}\avg{b}
    \end{pmatrix},\\[0.1cm]
    {\state{f}^{2*}} (\state{u}_L, \state{u}_R) = & 
    \begin{pmatrix}
        \avg{h v_2} \\[0.1cm]
        \avg{h v_1} \avg{v_2}\\[0.1cm]
        \avg{h v_2} \avg{v_2} + \frac{g}{2}h_Lh_R
    \end{pmatrix},
    \quad
    \numnonconsSeta{\tilde{\Jan}} \left(\state{u}_L, \state{u}_R, \left(J\vec{a}^2\right)_L, \left(J\vec{a}^2\right)_R\right) = \begin{pmatrix}
        0 \\[0.1cm] gh_L\\[0.1cm] gh_L
    \end{pmatrix}
    \circ
    \begin{pmatrix}
        0 \\[0.1cm] \avg{Ja_1^2}\avg{b} \\[0.1cm] \avg{Ja_2^2}\avg{b}
    \end{pmatrix}.
    \end{aligned}
\end{equation}
Because these terms are formulated in local-times-symmetric form, they are discretized using the flux-differencing formulation of \citet{rueda2024flux}.
\end{enumerate}

For the surface numerical fluxes and non-conservative interface terms (used both by the DGSEM at element boundaries and by the low-order FV scheme at neighboring nodal interfaces) we include a local Lax--Friedrichs-type dissipation proportional to the jump in entropy variables, as described in Appendix~\ref{app:swe_2d}.

Figure~\ref{fig:wb:error} presents the lake-at-rest error, $H(\vec{x}, t) - H(\vec{x}, 0)$, at time $t=10$ over the domain $\Omega$ for all three configurations. 
The results shown in Figure~\ref{fig:wb:ersing} show that the proposed flux-differencing formulation, when combined with the reformulated fluxes in \eqref{eq:ersing_fluxes_2d}, preserves the lake-at-rest equilibrium to machine precision. 
This confirms that the scheme satisfies the required discrete balance conditions in the hybrid DG/FV setting.

In contrast, combining the novel flux-differencing formulation with the local-times-jump reformulation \eqref{eq:wintermeyer_local_jump} produces spurious oscillations, as illustrated in Figure~\ref{fig:wb:wintermeyer:jump}. 
These errors arise because the flux contributions do not vanish node-wise, as required for Corollary~\ref{cor:nodewise_balance}.

The third configuration, shown in Figure~\ref{fig:wb:wintermeyer:symmetric}, employs the local-times-symmetric flux-differencing formulation of \citet{rueda2024flux} together with the local-times-symmetric reformulation \eqref{eq:wintermeyer_local_symmetric}. 
As expected, this approach also fails to preserve the steady state because it lacks a mechanism to maintain the equilibrium under node-wise blending between the high- and low-order discretizations.

Overall, these results demonstrate that achieving well-balancedness in the hybrid DG/FV framework requires not only a compatible discretization of the fluxes and non-conservative terms, but also a flux-differencing formulation specifically designed to preserve node-wise equilibrium under blending. 
The proposed approach satisfies these requirements and maintains the steady-state to machine precision.

\begin{figure}[]
    \begin{subfigure}{0.32\textwidth}
        \centering
        \includegraphics[width=\linewidth, trim= 4cm 0cm 7cm 0cm , clip]{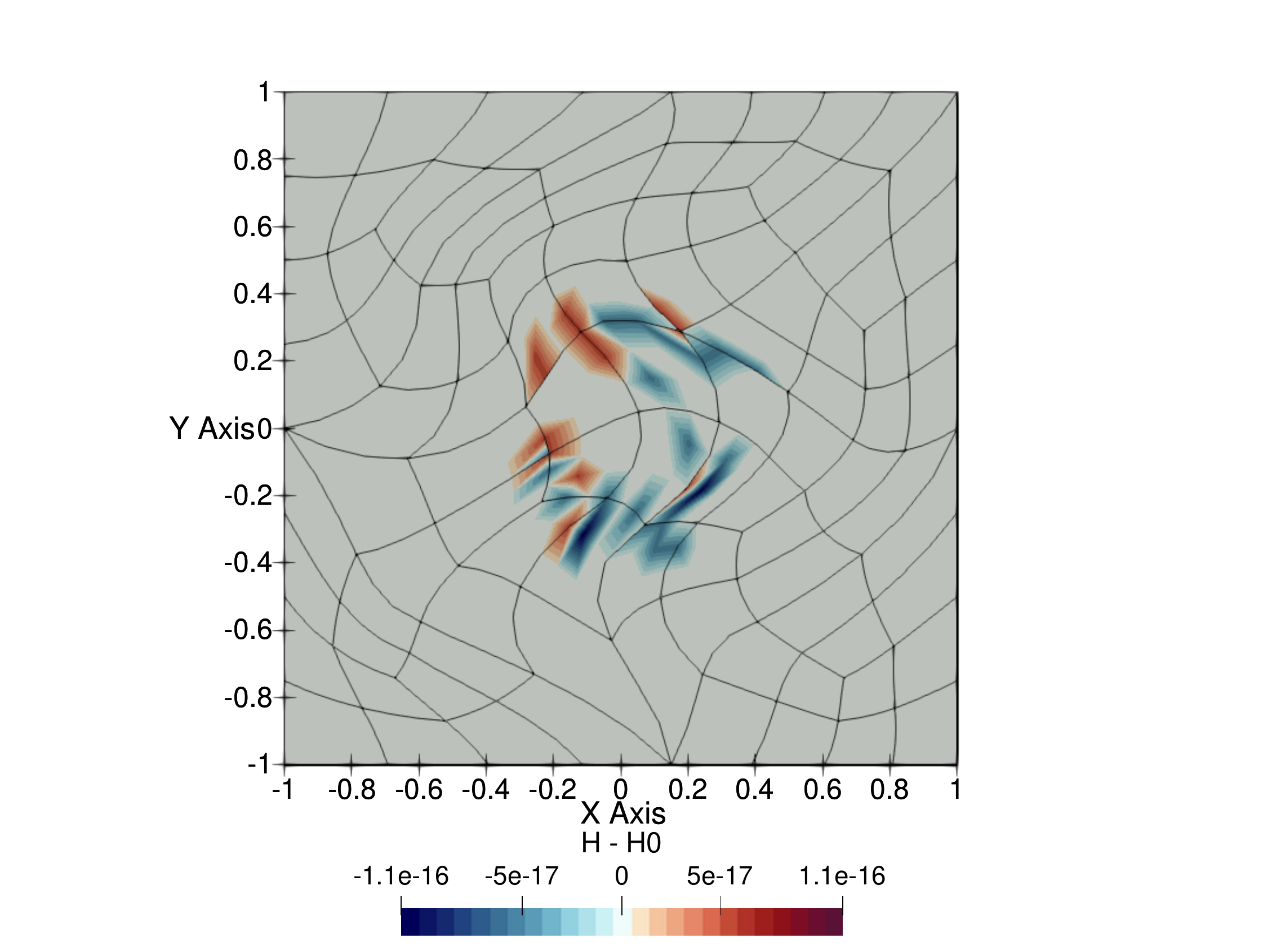}
        \caption{Ersing-jump \eqref{eq:ersing_fluxes_2d}.}
        \label{fig:wb:ersing}
    \end{subfigure}
    \begin{subfigure}{0.32\textwidth}
        \centering
        \includegraphics[width=\linewidth, trim= 4cm 0cm 7cm 0cm , clip]{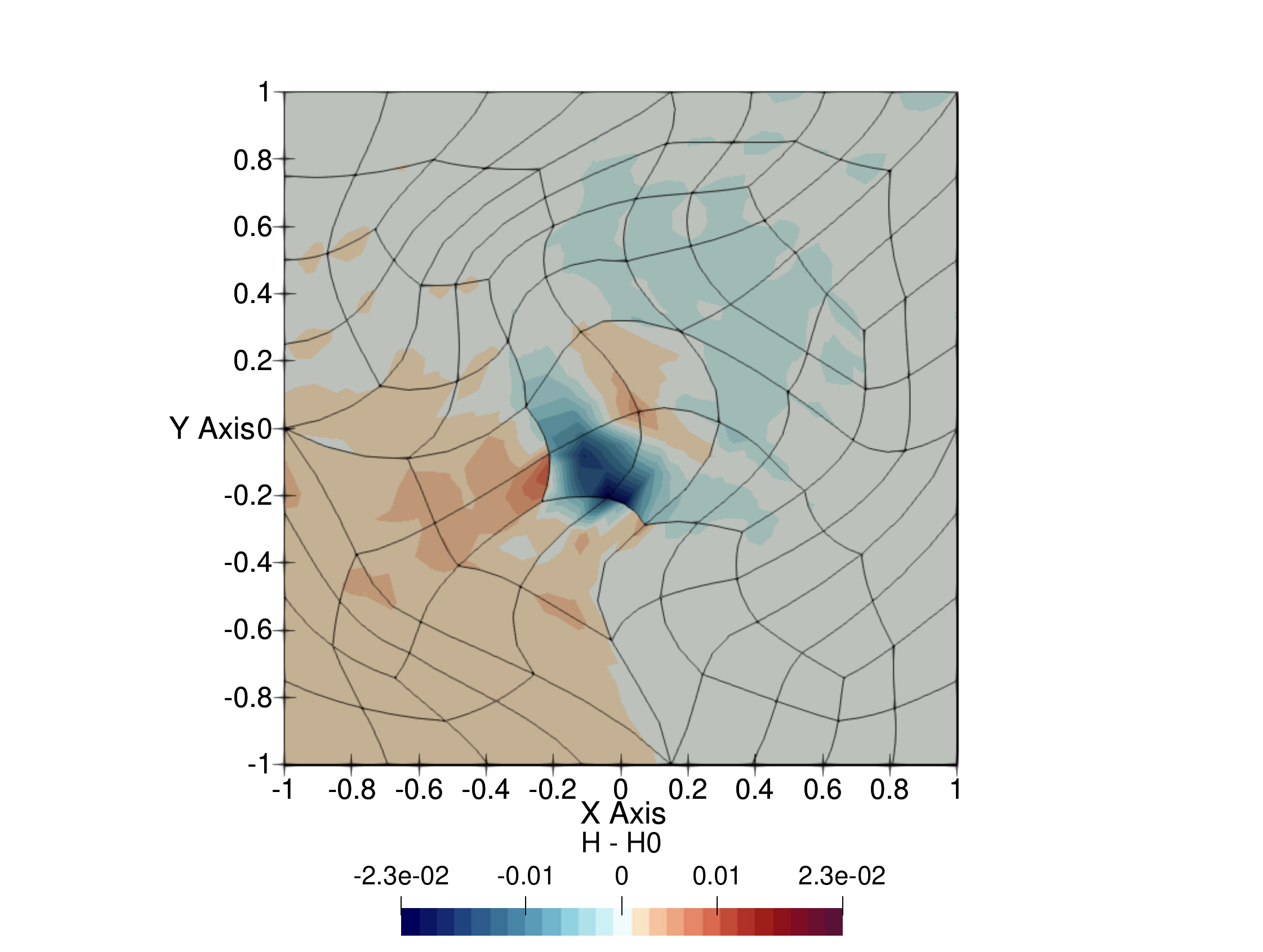}
        \caption{Wintermeyer-jump \eqref{eq:wintermeyer_local_jump}.}
        \label{fig:wb:wintermeyer:jump}
    \end{subfigure}
    \begin{subfigure}{0.32\textwidth}
        \centering
        \includegraphics[width=\linewidth, trim= 4cm 0cm 7cm 0cm , clip]{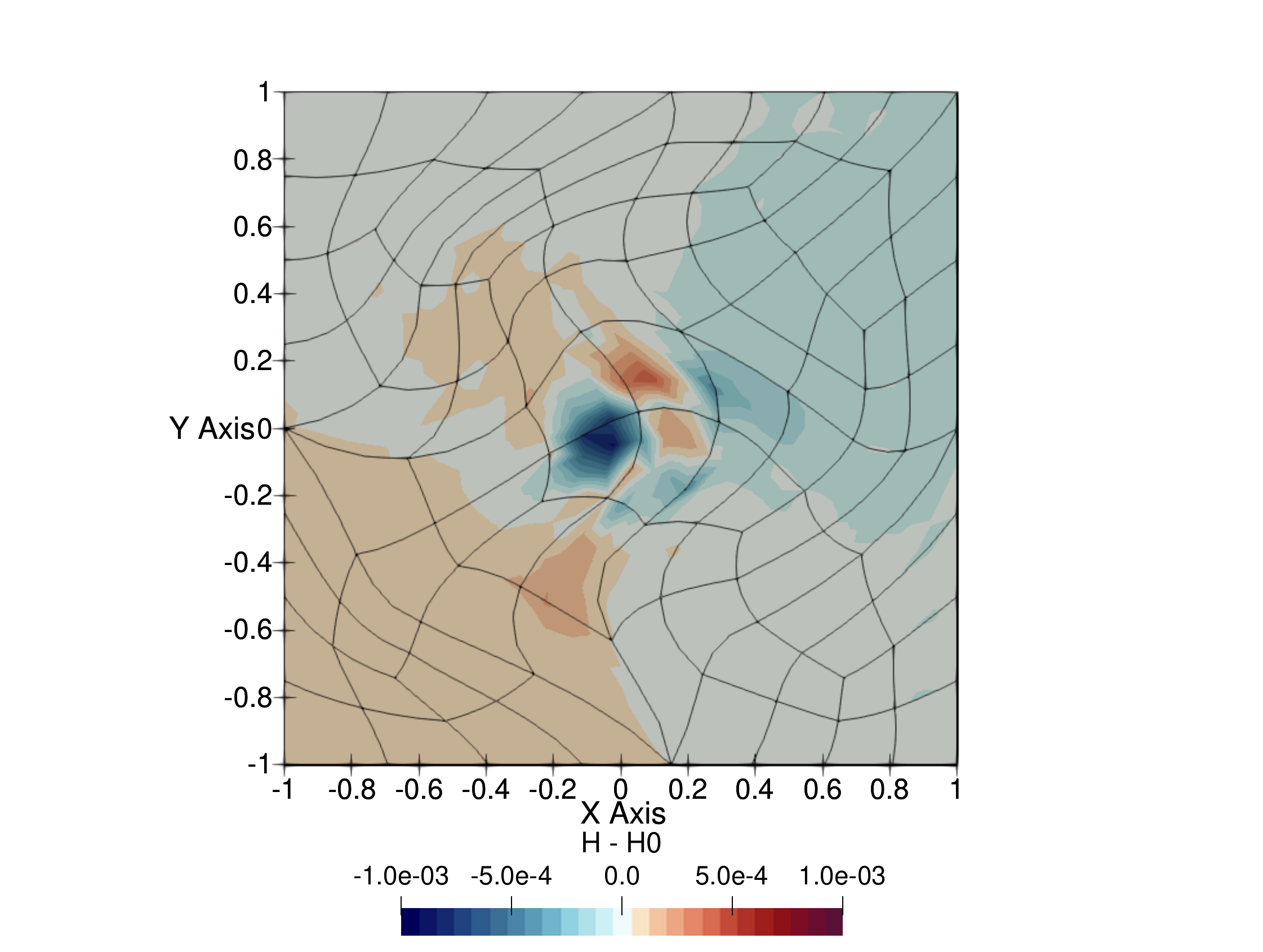}
        \caption{Wintermeyer-symmetric \eqref{eq:wintermeyer_local_symmetric}.}
    \label{fig:wb:wintermeyer:symmetric}
    \end{subfigure}
    \caption{Contour plots showing the lake-at-rest error $H(\vec{x},t) - H(\vec{x},0)$ at time $t=10$ for three different configurations of fluxes and flux-differencing formulas. The color range is adjusted to the solution range in each subplot.}
    \label{fig:wb:error}
\end{figure}

\subsection{Circular dam break} \label{sec:blast}

In this section, we consider a circular dam-break test to highlight the differences among the two flux-differencing formulas that we have considered in this work and to assess the shock-capturing performance of the proposed approach. 
In particular, we compare results obtained with the following flux-differencing formulations, which are consistent with the DGSEM discretization for non-conservative terms expressed in local-times-jump form:
\begin{enumerate}
    \item[(a)] The staggered fluxes proposed in this work: \eqref{eq:leftFlux}--\eqref{eq:rightFlux}.
    
    \item[(b)] The alternative staggered fluxes introduced in Remark~\ref{remark:alternative}, which do not satisfy the pairwise symmetry at interfaces introduced in Remark~\ref{remark:criteria}: \eqref{eq:leftFlux_alternative}--\eqref{eq:rightFlux_alternative}.
\end{enumerate}

We use a coarse Cartesian mesh consisting of $32 \times 32$ elements over the domain $[0,4]^2$, with polynomial degree $N=4$. The initial condition is given by
\begin{equation*}
    H(\vec{x},0) = 
    \begin{cases}
        4,
    & \mathrm{if}~\|\vec{x}\|_2 \le 0.5, \\
        2, & \mathrm{otherwise},
    \end{cases}
    \quad
    \vec{v}_r(\vec{x},0) = 
    \begin{cases}
        0.1882,
    & \mathrm{if}~\|\vec{x}\|_2 \le 0.5, \\
        0, & \mathrm{otherwise},
    \end{cases},
    \quad
    b(\vec{x}) = 0.2 + 0.2 \cos\!\left(\pi (x+y)\right),
\end{equation*}
where $v_r = \vec{v}\cdot\vec{x}/\norm{\vec{x}}_2$ denotes the radial velocity. The simulation uses the two-point fluxes of \citet{ersing2025entropy} together with the modified local-times-jump formulation of the non-conservative terms by \citet{ersing2025entropy} (see Appendix~\ref{app:swe_2d}). 
We set the gravitational acceleration to $g=1$, CFL$=0.4$, and integrate until the final time $t=2$.

For this example, the nodal limiting coefficients are computed to enforce a non-oscillatory behavior of the total water height, $H := h+b$. To this end, we first define local lower and upper bounds for the total water height using the prediction of the low-order FV solver at the next Runge--Kutta stage:
\begin{equation} \label{eq:bounds}
    H^{\min}_{ij} := \min_{k \in \NN (ij)} H^{\FV}_{k},
    \quad
    H^{\max}_{ij} :=
    \max_{k \in \NN (ij)} H^{\FV}_{k},
\end{equation}
where $\NN(ij)$ denotes the low-order stencil associated with node $ij$, namely
\[
\NN(ij) = \{(i-1)j,\,(i+1)j,\,i(j-1),\,i(j+1),\,ij\}.
\]

After computing these bounds, we determine the individual nodal limiting coefficients using a Flux-Corrected Transport (FCT) algorithm based on a Zalesak-type limiter (see, e.g., \cite{zalesak1979fully,kuzmin2010failsafe,kuzmin2012,lohmann2017,RUEDARAMIREZ2022}), ensuring preservation of the bounds in \eqref{eq:bounds}.
The interface blending coefficients between two nodes as the maximum between the nodal coefficients 
\eqref{eq:interface_alpha}.

Figure~\ref{fig:blast_formulas} shows the bottom topography and total water height after 20 time steps for the two staggered flux formulations considered here. 
Among these, the proposed staggered fluxes \eqref{eq:leftFlux}--\eqref{eq:rightFlux} yield the best overall performance, as illustrated in Figure~\ref{fig:blast_newformula}. The alternative formulation, \eqref{eq:leftFlux_alternative}--\eqref{eq:rightFlux_alternative}, which violates the symmetry condition of Remark~\ref{remark:criteria}, produces asymmetric solutions accompanied by over- and undershoots (Figure~\ref{fig:blast_alternative}).

Although the FCT limiting procedure enforces the local bounds defined in \eqref{eq:bounds} and both staggered flux formulations are equivalent to the high-order DGSEM discretization, these numerical experiments suggest that the symmetry condition stated in Remark~\ref{remark:criteria} is essential for obtaining high-quality solutions.

\begin{figure}
    \centering
    \begin{subfigure}[t]{0.465\textwidth}
        \centering
        \includegraphics[width=\textwidth, trim=570 0 570 0, clip]{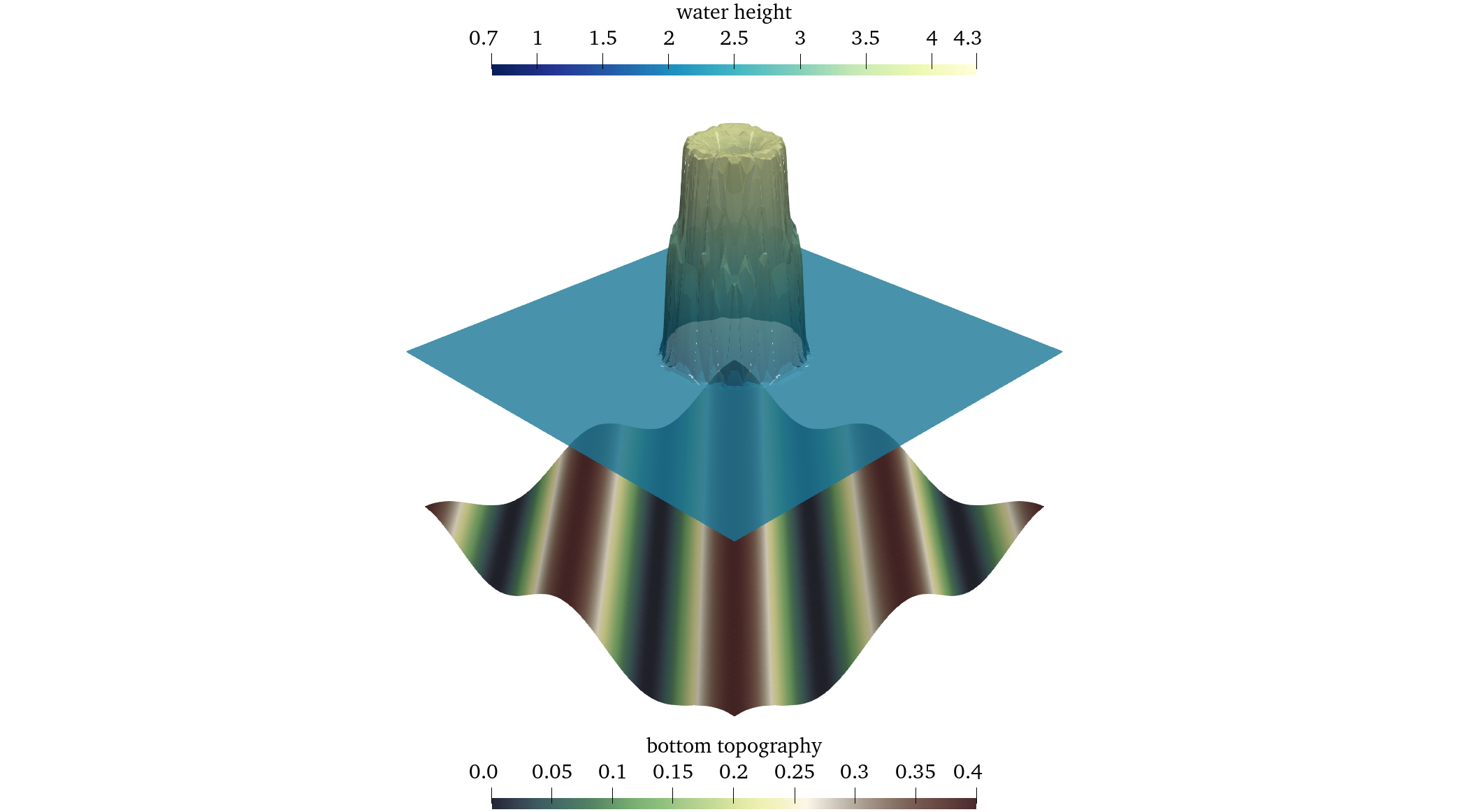}
        \caption{New formula \eqref{eq:leftFlux}--\eqref{eq:rightFlux}.}
        \label{fig:blast_newformula}
    \end{subfigure}
    \qquad
    \begin{subfigure}[t]{0.465\textwidth}
        \centering
        \includegraphics[width=\textwidth, trim=570 0 570 0, clip]{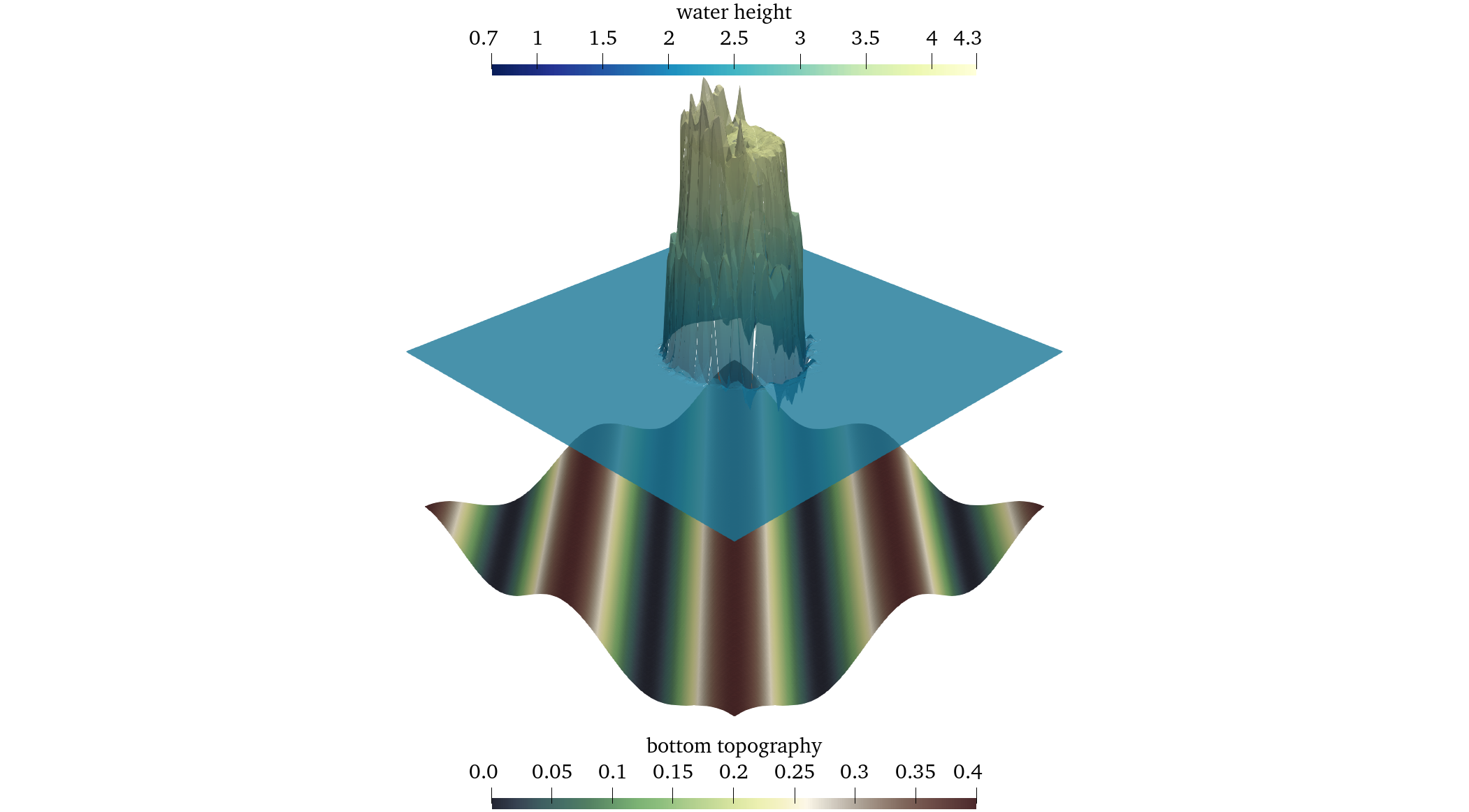}
        \caption{Alternative formula \eqref{eq:leftFlux_alternative}--\eqref{eq:rightFlux_alternative}.}
        \label{fig:blast_alternative}
    \end{subfigure}
    \caption{Three-dimensional visualization of the circular dam break simulation after $20$ time steps for the two different staggered flux formulas considered in this work.}
    \label{fig:blast_formulas}
\end{figure}

\subsection{Dam break past an oblique object}

As a final test, we consider a two-dimensional dam break flow past an isolated obstacle.
This dam break configuration assesses the behavior of the high-order DG approximation in the presence of multiple shock waves, a strong unsteady hydraulic jump, and small-scale eddies that form in wakes zones.
These phenomena are induced due to a dam break from a reservoir over an initially (shallow) wet floodplain that interacts with an oblique building structure. 

This test case exercises the wet/dry transition capabilities of the scheme when the flow interacts with the slanted bottom topography at the top and bottom portions of the channel.
Due to the discontinuous and complex solution structures that emerge, it provides an excellent configuration to compare the resolution capabilities of the novel node-wise limiting compared against the element-wise limiting strategy described in Hennemann et al.~\cite{Hennemann2020}.
To handle the wet/dry transitions we employ the hydrostatic reconstruction technique described in Ersing et al.~\cite{ersing2025entropy}.
As in the previous sections, the node-wise limiting strategy employs a bounds-preserving Zalesak-type limiter \cite{zalesak1979fully,kuzmin2010failsafe,kuzmin2012,lohmann2017,RUEDARAMIREZ2022} to compute the nodal limiting coefficients. 
The interface limiting coefficient is then defined as the maximum of the coefficients associated with the two neighboring nodes, as in \eqref{eq:interface_alpha}. 
For the element-limiting strategy, we adopt a modal energy indicator proposed in \cite{Hennemann2020,Persson2006} calculated from the water height $h$.

The test case domain, shown in Figure~\ref{fig:setup}, is a long channel with a smooth bed, and upstream walls with a gate initially separating a still reservoir of height $0.4$ m with a shallower region of water height of $0.02$ m.
Outflow boundary conditions are used at the right part of the domain. Wall boundary conditions are used at all other domain edges, the dam pylons, and the oblique obstacle.
An experiment of such a 2D dam break flow against an isolated obstacle was carried out by \cite{soares2007experimental} and the results of the experiment at six gauges points, G1--G6, are available for comparison purposes.
The specific locations of each gauge point are given in Table~\ref{tab:2Dposition}.
These gauge points are further illustrated in the domain Figure~\ref{fig:setup} as well as the discretized domain that uses $462$ unstructured quadrilateral elements given in Figure~\ref{fig:mesh}.
\begin{table}[h!]
\centering
    \caption{Position of gauges for 2D dam break flow against an isolated obstacle.}
    \label{tab:2Dposition}
    \begin{tabular}{lcccccc}
    \toprule
    Gauge & G1 & G2 & G3 & G4 & G5 & G6 \\
    \midrule
    $x \, (m)$ & 10.35 & 10.35 & 11.70 & 11.70 & 12.90 & 5.83 \\
    $y \, (m)$ & 2.95 & 1.20 & 2.95 & 1.00 & 2.10 & 2.90 \\
    \bottomrule
\end{tabular}
\end{table}
\begin{figure}
    \centering
    \begin{subfigure}[t]{\textwidth}
        \centering
        \includegraphics[width=\linewidth]{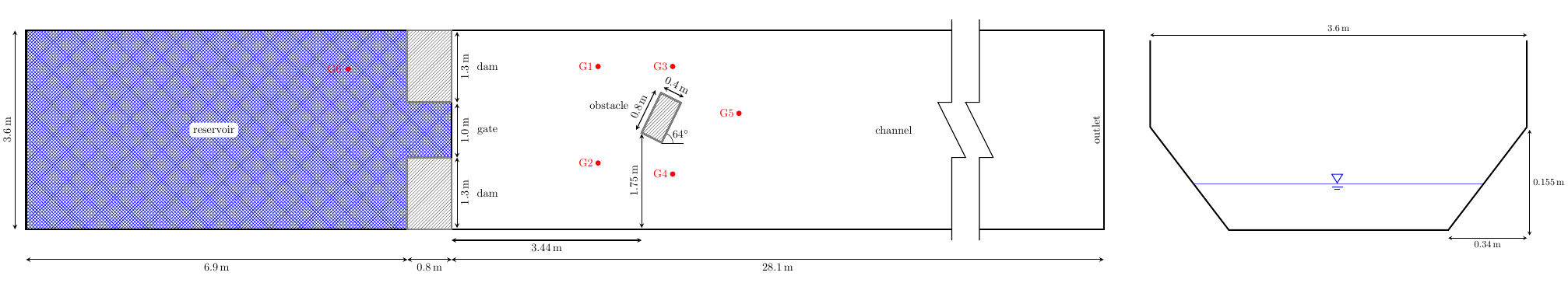}
        \caption{Channel diagram with gauge locations (left)  and channel cross-section (right).}
        \label{fig:setup}
    \end{subfigure}
    \vspace{1em} 
    %
    \begin{subfigure}[t]{\textwidth}
        \centering
        \includegraphics[width=\linewidth]{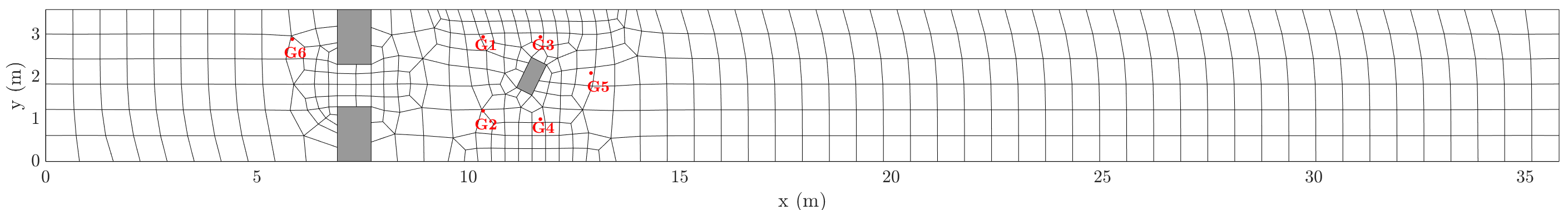}
        \caption{Unstructured quadrilateral mesh with gauge locations.}
        \label{fig:mesh}
    \end{subfigure}
    \caption{(top) Physical domain and cross section for channel flow past an isolated obstacle. (bottom) Decomposition of physical domain into non-overlapping, linear quadrilateral elements.}
\end{figure}

The trapezoidal bottom topography for the channel everywhere except at the dam pylons is represented with the function
\begin{equation}
    \label{eq:channelBottom}
    b(x, y) = \frac{m}{2}\left(|y - 0.34| + |y - 3.26| - 2.92\right),\quad\text{with}\quad m = \frac{0.155}{0.34}.
\end{equation}
To account for bottom friction and allow comparison with experimental data, we introduce an additional source term for the Manning friction with Manning's coefficient set to $n=0.01\,{s}\,{m^{-1/3}}$ \cite{soares2007experimental}.
As the computation contains wet/dry transitions, additional treatment is necessary to ensure robustness and avoid artificially large velocity components that may arise when dividing by water heights near zero.
Therefore, as described in \cite{ersing2025entropy}, we either apply the desingularization formula from \cite{chertock2015well} to adjust the momentum at each node or set the momentum to zero at dry nodes 
\begin{equation}
	hv_{1,2} = 
	\begin{cases}
		\frac{2h^2hv_{1,2}}{h^2 + \max(h^2, \tau_{vel})} & \text{if } h > 5\epsilon_{num},\\[0.1cm]
		0	& \text{otherwise},
	\end{cases}
\end{equation}
where $\epsilon_{num} = 10^{-13}$ is the tolerance for dry states and the threshold $\tau_{vel} = 10^{-8}$ for this problem setup.
This procedure recovers the exact momentum in wet regions and ensures a robust velocity computation and vanishing velocity at dry nodes.
In the presence of partially wet elements, where at least one DG node within an element has a water height below the threshold $10^{-4}$, we adopt an approach from \cite{ersing2025entropy,bonev2018discontinuous} to ensure positivity and well-balancedness for arbitrary wet/dry transition locations.
For this, we modify the limiting coefficient and set $\alpha = 1$ everywhere in said element to fallback to a pure finite volume method in partially dry elements. 
See \cite[Sec. 3.6]{ersing2025entropy} for complete details.

We simulate the dam break past an isolated obstacle up to a final time of $t = 30\ s$ on the discrete domain given by the mesh in Figure~\ref{fig:mesh} using polynomials of degree $N=3$ and $N=5$ in each spatial direction.
We apply the well-balanced node-wise limiting strategy and compare the results against the element-wise limiting strategy provided by~\citet{Hennemann2020}.
For both strategies, we use CFL-based time stepping with $\textrm{CFL}=0.225$.

For this test case, as the gate is swiftly opened, a shock wave propagates and collides with the obstacle forming a reflected
hydraulic jump in the neighborhood of G2, Figure~\ref{fig:setup}.
This collision also generates two shock waves that move in different directions (one towards G3 and one towards G4).
Downstream of the obstacle, a wake zone emerges surrounded by recurrent wave crossings that produce small wake eddies in the region near G5.
This flow behavior is observable in the experimental data from Soares-Fraz{\~a}o and Zech~\cite{soares2007experimental} or in the numerical results of Ginting~\cite{ginting2019central}.

We provide snapshots of the complex solution behavior at $t=10$ in Figure~\ref{fig:N3} for the $N=3$ and in Figure~\ref{fig:N5} for the $N=5$ run, respectively.
For both configurations, we see that the unsteady hydraulic bore that forms in front of the obstacle is captured well.
For the higher resolution $N=5$ simulation we see that the ``X'' flow pattern near the gate is better resolved, especially when using the node-wise limiter.
The results in Figs.~\ref{fig:N3} and \ref{fig:N5} demonstrate that the targeted limiting provided by the node-wise limiter helps preserve complex flow features, whereas the element-wise limiter tends to suppress fine features due to excessive dissipation.
It is interesting to note that the node-wise limiting, although local, does not necessarily activate at the shock location.
Instead, it might activate at the opposite end of an element in order to keep the solution non-oscillatory.
This is especially evident when comparing the $N=3$ variants, where the shock-bore interaction in the neighborhood of G2 is nearly absent for the element-wise simulation.

\begin{figure}
    \begin{subfigure}{\textwidth}
        \centering
        \includegraphics[width=\textwidth, trim=0 0 0 0, clip]{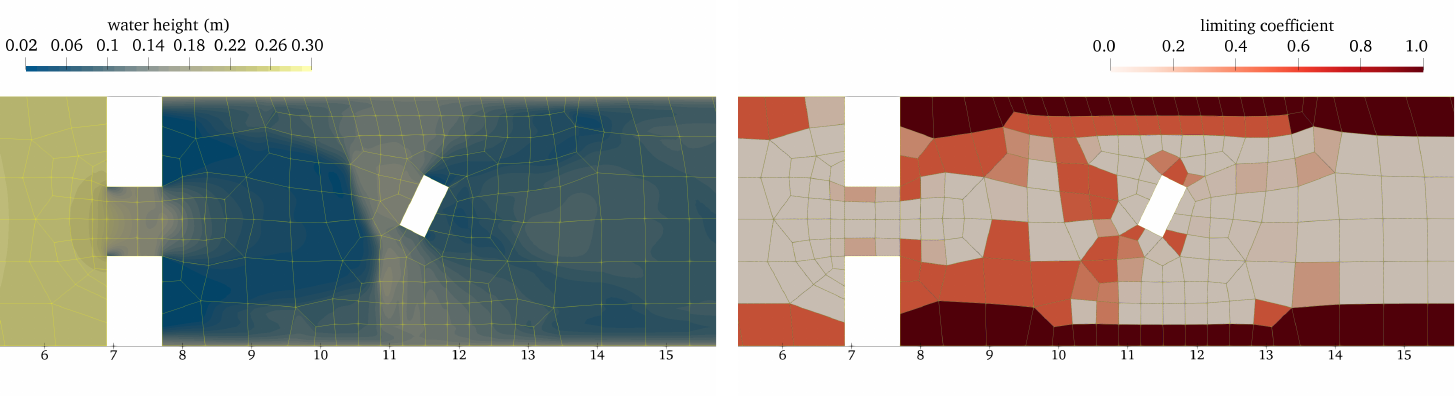}
        \caption{$N=3$, element-wise limiting}
    \end{subfigure}
    
    \begin{subfigure}{\textwidth}
        \centering
        \includegraphics[width=\textwidth, trim=0 0 0 0, clip]{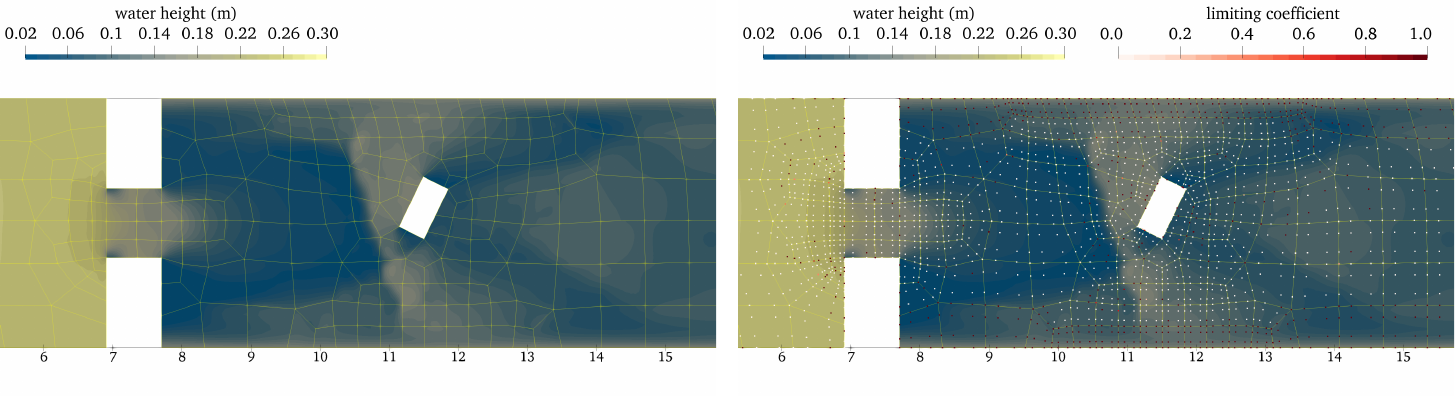}
        \caption{$N=3$, node-wise limiting}
    \end{subfigure}
    \caption{Solution (left) and limiting coefficients (right) for the dam break flow past an obstacle at $t=10$ with polynomial degree $N=3$ in each spatial direction in each element. The node-wise limiting concentrates dissipation near complex flow features.}
    \label{fig:N3}
\end{figure}

\begin{figure}
    \begin{subfigure}{\textwidth}
        \centering
        \includegraphics[width=\textwidth, trim=0 0 0 0, clip]{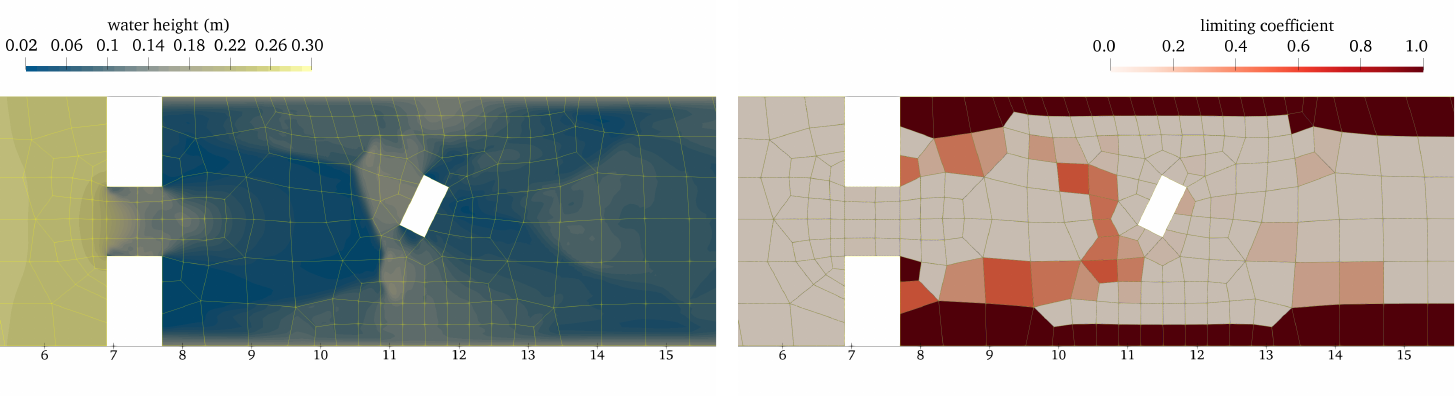}
        \caption{$N=5$, element-wise limiting}
    \end{subfigure}
    
    \begin{subfigure}{\textwidth}
        \centering
        \includegraphics[width=\textwidth, trim=0 0 0 0, clip]{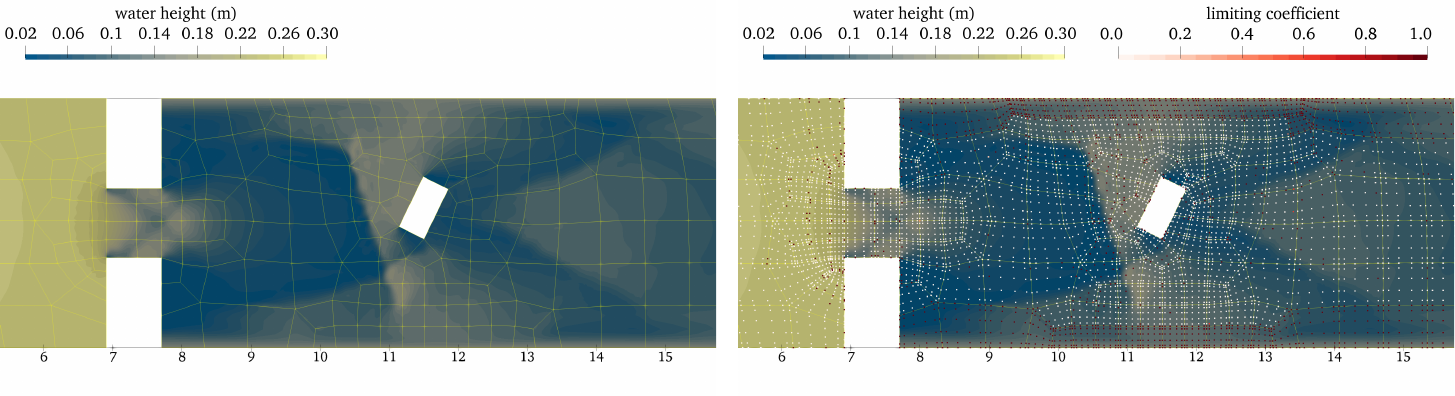}
        \caption{$N=5$, node-wise limiting}
    \end{subfigure}
    \caption{Solution (left) and limiting coefficients (right) for the dam break flow past an obstacle at $t=10$ with polynomial degree $N=5$ in each spatial direction in each element. The node-wise limiting concentrates dissipation near complex flow features.}
    \label{fig:N5}
\end{figure}

To further assess the solution quality, we compare the $N=3$ and $N=5$ element-wise and node-wise limiting results against the experimental data from \cite{soares2007experimental}.
We present the results for the water height at G2, G4, G5, and G6 in Figure~\ref{fig:gauges}.
All the results compare well with those presented in the literature for this test case \cite{cea2015simple,hou20132d,ginting2019central,ayog2021second}.
For all gauges we see that the node-wise limiter better captures the flow behavior compared to the element-wise limiting solution.
We consider G2, G4 and G5 located around the obstacle where complex energetic flow features occur.
Particularly for G2, the results in Figure~\ref{fig:gauges} capture the first incoming wave properly at around $1.5\ s$ for both resolutions.
However, at the transition time between $t=13\ s$ to $t=19\ s$, as the unsteady bore propagates to the left, the node-wise limiting captures the flow behavior better at either resolution.
At G4, all runs detect the maximum bore at $2\ s$ accurately; however, between $2$ and $5\ s$ it predicts the water height slightly higher.
The behavior afterward is similar for both resolutions.
The water height recorded at point G5 differs slightly between the resolutions.
The higher resolution $N=5$ is slightly better at capturing the eddy behavior in the wake region behind the obstacle.
For the gauge G6 within the reservoir, both resolutions accurately capture the ``draining'' of the water.

\begin{figure}[]
    \begin{subfigure}{0.495\textwidth}
        \centering
        \includegraphics[width=\linewidth, trim= 0 0 0 0, clip]{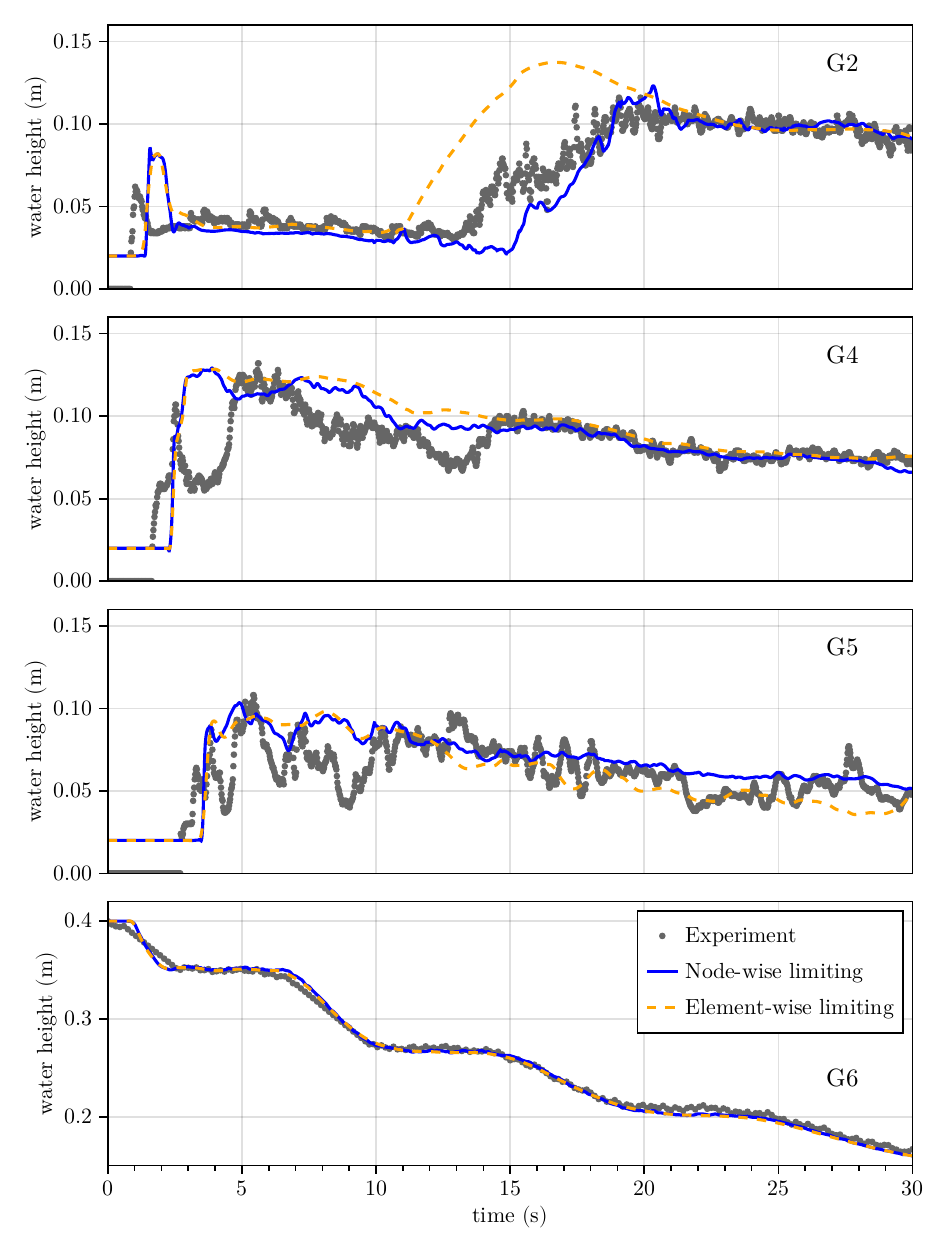}
        \caption{$N=3$}
    \end{subfigure}
    \begin{subfigure}{0.495\textwidth}
        \centering
        \includegraphics[width=\linewidth, trim= 0 0 0 0, clip]{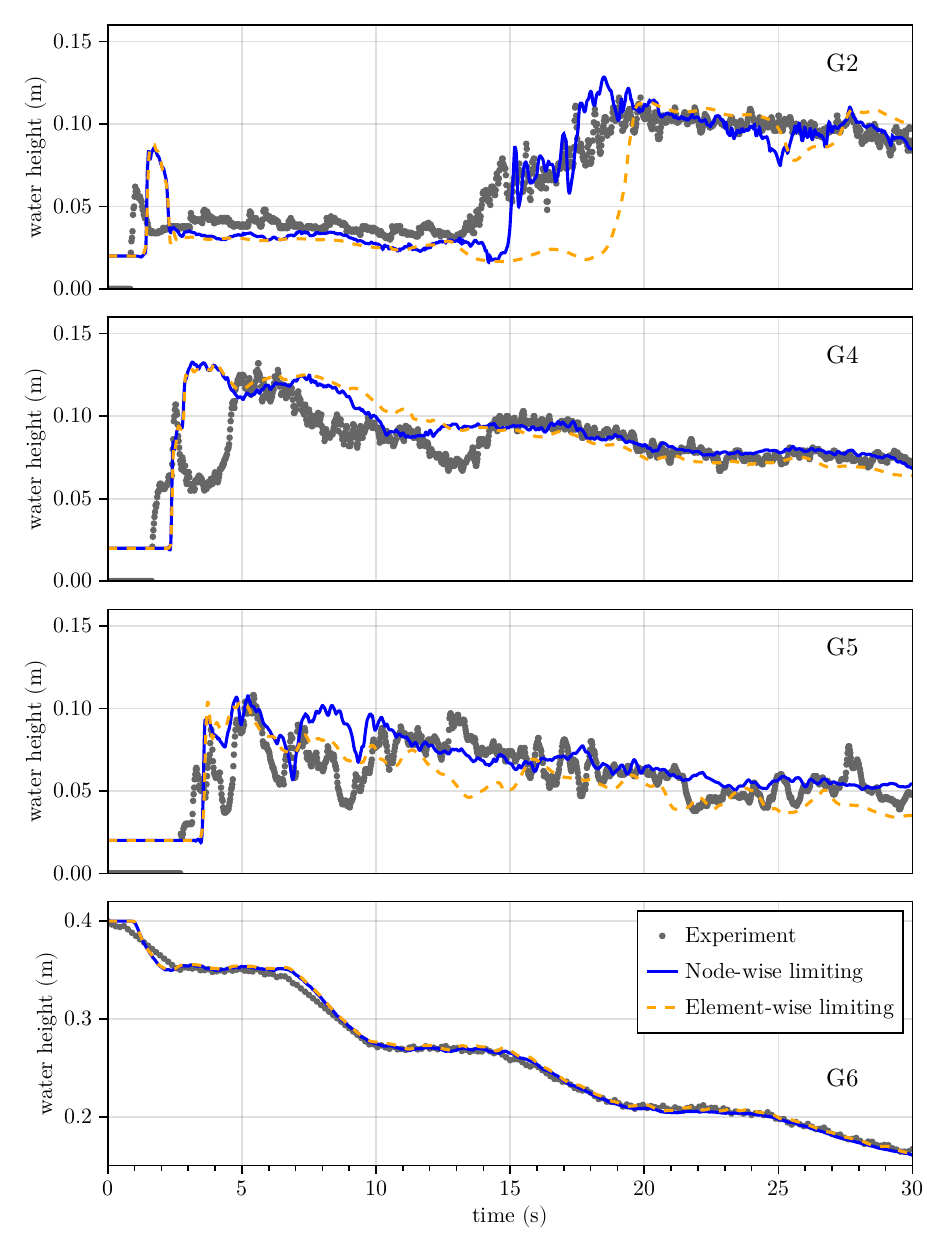}
        \caption{$N=5$}
    \end{subfigure}
    \caption{Water height histories at gauges G2, G4, G5, and G6.}
    \label{fig:gauges}
\end{figure}

These results serve to demonstrate that the moderately resolved simulations with $7,932$ DOFs per equation ($N=3$) or $16,632$ DOFs per equation ($N=5$) yield acceptable results for this dam break past an obstacle test case compared against experimental~\cite{soares2007experimental} or high resolution $2^{\textrm{nd}}$ order benchmark data~\cite{hou20132d,ginting2019central}.
The results above further demonstrate that the targeted dissipation offered by the novel, well-balanced node-wise limiting strategy provides superior results compared to the element-wise strategy, for this complex test case with hydraulic bores, shocks, and wet/dry interactions.

\section{Conclusions} \label{sec:Conclusions}

In this work, we have developed a novel hybrid DG/FV framework for non-conservative systems that enables robust node-wise subcell limiting while exactly preserving well-balanced steady states. 
The central challenge addressed in this paper is that standard flux-differencing formulations generally destroy well-balancedness under nodal blending, even when both the underlying DG and FV discretizations are individually well-balanced.

To overcome this limitation, we propose a methodology consisting of two key steps.
First, we introduce a suitable reformulation of the non-conservative system in which the source term is expressed in terms of the gradient or divergence of a quantity that vanishes at equilibrium. 
This reformulation enables the construction of compatible two-point fluxes based on the jump of that quantity. 
Second, we develop a new flux-differencing formulation in which the non-conservative contributions are written in a local-times-jump form. 
This structure ensures that each staggered DG flux vanishes individually at equilibrium, thereby guaranteeing well-balancedness at the node level.

As a representative case study, we consider the shallow water equations. 
In this setting, the proposed reformulation expresses the source term in terms of the gradient of the total water height. 
This enables the construction of two-point fluxes whose jump contributions vanish for equilibrium states. 
Combined with the novel flux-differencing formulation, the resulting hybrid DG/FV scheme is capable of capturing shocks while remaining both bounds-preserving and exactly well-balanced under arbitrary node-wise selections of the limiting coefficients.

The proposed method retains the high-order accuracy of the underlying DGSEM discretization, as confirmed by convergence studies, while achieving exact preservation of lake-at-rest steady states under node-wise limiting. 
Numerical experiments demonstrate that the scheme significantly improves robustness and stability compared to existing approaches, particularly in challenging scenarios involving wet/dry fronts, shocks, and complex flow interactions. 
Furthermore, the node-wise limiting strategy introduces more targeted dissipation, leading to improved resolution of fine-scale flow features when compared to traditional element-wise limiting techniques.

Although the present work focuses on the shallow water equations, the proposed framework is applicable to a broader class of balance laws, provided that the non-conservative terms admit a compatible local-times-jump representation.

\section*{Acknowledgments}

Andrés M. Rueda-Ramírez gratefully acknowledges funding from the Spanish Ministry of Science, Innovation, and Universities through the ``Beatriz Galindo'' grant (BG23-00062).
This project has received funding from the European Research Council (ERC) under the European Union's Horizon Europe research and innovation programme (grant agreement No. 101167322 - TRANSDIFFUSE).
Gregor J. Gassner and Andrés M. Rueda-Ramírez acknowledge funding through the German Federal Ministry for Education and Research (BMBF) project ``ICON-DG'' (01LK2315B) of the ``WarmWorld Smarter'' program.


\printcredits

\bibliographystyle{model1-num-names}

\bibliography{Biblio.bib}

\begin{thebibliography}{63}
\expandafter\ifx\csname natexlab\endcsname\relax\def\natexlab#1{#1}\fi
\providecommand{\bibinfo}[2]{#2}
\ifx\xfnm\relax \def\xfnm[#1]{\unskip,\space#1}\fi
\bibitem[{Wang et~al.(2013)Wang, Fidkowski, Abgrall, Bassi, Caraeni, Cary, Deconinck, Hartmann, Hillewaert, Huynh, Kroll, May, Persson, van Leer, and Visbal}]{Wang2013High}
\bibinfo{author}{Z.~J. Wang}, \bibinfo{author}{K.~Fidkowski}, \bibinfo{author}{R.~Abgrall}, \bibinfo{author}{F.~Bassi}, \bibinfo{author}{D.~Caraeni}, \bibinfo{author}{A.~Cary}, \bibinfo{author}{H.~Deconinck}, \bibinfo{author}{R.~Hartmann}, \bibinfo{author}{K.~Hillewaert}, \bibinfo{author}{H.~T. Huynh}, \bibinfo{author}{N.~Kroll}, \bibinfo{author}{G.~May}, \bibinfo{author}{P.-O. Persson}, \bibinfo{author}{B.~van Leer}, \bibinfo{author}{M.~Visbal},
\newblock \bibinfo{title}{{High-order CFD methods: current status and perspective}},
\newblock \bibinfo{journal}{International Journal for Numerical Methods in Fluids} \bibinfo{volume}{72} (\bibinfo{year}{2013}) \bibinfo{pages}{811--845}.
\bibitem[{Cockburn et~al.(2000)Cockburn, Karniadakis, and Shu}]{Cockburn2000}
\bibinfo{author}{B.~Cockburn}, \bibinfo{author}{G.~E. Karniadakis}, \bibinfo{author}{C.-W. Shu},
\newblock \bibinfo{title}{{The Development of Discontinuous Galerkin Methods}},
\newblock \bibinfo{journal}{Discontinuous Galerkin Methods} \bibinfo{volume}{11} (\bibinfo{year}{2000}) \bibinfo{pages}{3--50}.
\bibitem[{Hindenlang et~al.(2012)Hindenlang, Gassner, Altmann, Beck, Staudenmaier, and Munz}]{Hindenlang2012}
\bibinfo{author}{F.~Hindenlang}, \bibinfo{author}{G.~J. Gassner}, \bibinfo{author}{C.~Altmann}, \bibinfo{author}{A.~Beck}, \bibinfo{author}{M.~Staudenmaier}, \bibinfo{author}{C.~D. Munz},
\newblock \bibinfo{title}{{Explicit discontinuous Galerkin methods for unsteady problems}},
\newblock \bibinfo{journal}{Computers and Fluids} \bibinfo{volume}{61} (\bibinfo{year}{2012}) \bibinfo{pages}{86--93}.
\bibitem[{Ranocha et~al.(2021)Ranocha, Schlottke-Lakemper, Chan, Rueda-Ram{\'\i}rez, Winters, Hindenlang, and Gassner}]{ranocha2021efficient}
\bibinfo{author}{H.~Ranocha}, \bibinfo{author}{M.~Schlottke-Lakemper}, \bibinfo{author}{J.~Chan}, \bibinfo{author}{A.~M. Rueda-Ram{\'\i}rez}, \bibinfo{author}{A.~R. Winters}, \bibinfo{author}{F.~Hindenlang}, \bibinfo{author}{G.~J. Gassner},
\newblock \bibinfo{title}{Efficient implementation of modern entropy stable and kinetic energy preserving discontinuous {G}alerkin methods for conservation laws},
\newblock \bibinfo{journal}{arXiv preprint arXiv:2112.10517}  (\bibinfo{year}{2021}).
\bibitem[{Fisher and Carpenter(2013)}]{Fisher2013a}
\bibinfo{author}{T.~C. Fisher}, \bibinfo{author}{M.~H. Carpenter},
\newblock \bibinfo{title}{{High-order entropy stable finite difference schemes for nonlinear conservation laws: Finite domains}},
\newblock \bibinfo{journal}{Journal of Computational Physics} \bibinfo{volume}{252} (\bibinfo{year}{2013}) \bibinfo{pages}{518--557}.
\bibitem[{Carpenter et~al.(2014)Carpenter, Fisher, Nielsen, and Frankel}]{Carpenter2014}
\bibinfo{author}{M.~H. Carpenter}, \bibinfo{author}{T.~C. Fisher}, \bibinfo{author}{E.~J. Nielsen}, \bibinfo{author}{S.~H. Frankel},
\newblock \bibinfo{title}{{Entropy stable spectral collocation schemes for the Navier-Stokes Equations: Discontinuous interfaces}},
\newblock \bibinfo{journal}{SIAM Journal on Scientific Computing} \bibinfo{volume}{36} (\bibinfo{year}{2014}) \bibinfo{pages}{B835--B867}.
\bibitem[{Gassner(2013)}]{Gassner2013}
\bibinfo{author}{G.~J. Gassner},
\newblock \bibinfo{title}{{A Skew-Symmetric Discontinuous Galerkin Spectral Element Discretization and Its Relation to SBP-SAT Finite Difference Methods}},
\newblock \bibinfo{journal}{SIAM Journal on Scientific Computing} \bibinfo{volume}{35} (\bibinfo{year}{2013}) \bibinfo{pages}{A1233--A1253}.
\bibitem[{Gassner et~al.(2016)Gassner, Winters, and Kopriva}]{Gassner2016}
\bibinfo{author}{G.~J. Gassner}, \bibinfo{author}{A.~R. Winters}, \bibinfo{author}{D.~A. Kopriva},
\newblock \bibinfo{title}{{Split form nodal discontinuous Galerkin schemes with summation-by-parts property for the compressible Euler equations}},
\newblock \bibinfo{journal}{Journal of Computational Physics} \bibinfo{volume}{327} (\bibinfo{year}{2016}) \bibinfo{pages}{39--66}.
\bibitem[{Bohm et~al.(2018)Bohm, Winters, Gassner, Derigs, Hindenlang, and Saur}]{Bohm2018}
\bibinfo{author}{M.~Bohm}, \bibinfo{author}{A.~R. Winters}, \bibinfo{author}{G.~J. Gassner}, \bibinfo{author}{D.~Derigs}, \bibinfo{author}{F.~Hindenlang}, \bibinfo{author}{J.~Saur},
\newblock \bibinfo{title}{{An entropy stable nodal discontinuous Galerkin method for the resistive MHD equations. Part I: Theory and numerical verification}},
\newblock \bibinfo{journal}{Journal of Computational Physics} \bibinfo{volume}{1} (\bibinfo{year}{2018}) \bibinfo{pages}{1--35}.
\bibitem[{Wintermeyer et~al.(2017)Wintermeyer, Winters, Gassner, and Kopriva}]{wintermeyer2017entropy}
\bibinfo{author}{N.~Wintermeyer}, \bibinfo{author}{A.~R. Winters}, \bibinfo{author}{G.~J. Gassner}, \bibinfo{author}{D.~A. Kopriva},
\newblock \bibinfo{title}{An entropy stable nodal discontinuous {G}alerkin method for the two dimensional shallow water equations on unstructured curvilinear meshes with discontinuous bathymetry},
\newblock \bibinfo{journal}{Journal of Computational Physics} \bibinfo{volume}{340} (\bibinfo{year}{2017}) \bibinfo{pages}{200--242}.
\bibitem[{Rueda-Ram{\'\i}rez et~al.(2023)Rueda-Ram{\'\i}rez, Hindenlang, Chan, and Gassner}]{rueda2023entropy}
\bibinfo{author}{A.~M. Rueda-Ram{\'\i}rez}, \bibinfo{author}{F.~J. Hindenlang}, \bibinfo{author}{J.~Chan}, \bibinfo{author}{G.~J. Gassner},
\newblock \bibinfo{title}{Entropy-stable {G}auss collocation methods for ideal magneto-hydrodynamics},
\newblock \bibinfo{journal}{Journal of Computational Physics} \bibinfo{volume}{475} (\bibinfo{year}{2023}) \bibinfo{pages}{111851}.
\bibitem[{Rueda-Ram{\'\i}rez et~al.(2025)Rueda-Ram{\'\i}rez, Sikstel, and Gassner}]{rueda2025entropy}
\bibinfo{author}{A.~M. Rueda-Ram{\'\i}rez}, \bibinfo{author}{A.~Sikstel}, \bibinfo{author}{G.~J. Gassner},
\newblock \bibinfo{title}{An entropy-stable discontinuous {G}alerkin discretization of the ideal multi-ion magnetohydrodynamics system},
\newblock \bibinfo{journal}{Journal of Computational Physics} \bibinfo{volume}{523} (\bibinfo{year}{2025}) \bibinfo{pages}{113655}.
\bibitem[{Coquel et~al.(2021)Coquel, Marmignon, Rai, and Renac}]{coquel2021entropy}
\bibinfo{author}{F.~Coquel}, \bibinfo{author}{C.~Marmignon}, \bibinfo{author}{P.~Rai}, \bibinfo{author}{F.~Renac},
\newblock \bibinfo{title}{An entropy stable high-order discontinuous {G}alerkin spectral element method for the {B}aer-{N}unziato two-phase flow model},
\newblock \bibinfo{journal}{Journal of Computational Physics} \bibinfo{volume}{431} (\bibinfo{year}{2021}) \bibinfo{pages}{110135}.
\bibitem[{Waruszewski et~al.(2022)Waruszewski, Kozdon, Wilcox, Gibson, and Giraldo}]{waruszewski2022entropy}
\bibinfo{author}{M.~Waruszewski}, \bibinfo{author}{J.~E. Kozdon}, \bibinfo{author}{L.~C. Wilcox}, \bibinfo{author}{T.~H. Gibson}, \bibinfo{author}{F.~X. Giraldo},
\newblock \bibinfo{title}{Entropy stable discontinuous {G}alerkin methods for balance laws in non-conservative form: {A}pplications to the {E}uler equations with gravity},
\newblock \bibinfo{journal}{Journal of Computational Physics} \bibinfo{volume}{468} (\bibinfo{year}{2022}) \bibinfo{pages}{111507}.
\bibitem[{Hennemann et~al.(2020)Hennemann, Rueda-Ram{\'{i}}rez, Hindenlang, and Gassner}]{Hennemann2020}
\bibinfo{author}{S.~Hennemann}, \bibinfo{author}{A.~M. Rueda-Ram{\'{i}}rez}, \bibinfo{author}{F.~J. Hindenlang}, \bibinfo{author}{G.~J. Gassner},
\newblock \bibinfo{title}{{A provably entropy stable subcell shock capturing approach for high order split form DG for the compressible Euler equations}},
\newblock \bibinfo{journal}{Journal of Computational Physics}  (\bibinfo{year}{2020}) \bibinfo{pages}{109935}.
\bibitem[{Rueda-Ram{\'{i}}rez et~al.(2021)Rueda-Ram{\'{i}}rez, Hennemann, Hindenlang, Winters, and Gassner}]{Rueda-Ramirez2020}
\bibinfo{author}{A.~M. Rueda-Ram{\'{i}}rez}, \bibinfo{author}{S.~Hennemann}, \bibinfo{author}{F.~J. Hindenlang}, \bibinfo{author}{A.~R. Winters}, \bibinfo{author}{G.~J. Gassner}, \bibinfo{title}{{An entropy stable nodal discontinuous Galerkin method for the resistive MHD equations. Part II: Subcell finite volume shock capturing}}, volume \bibinfo{volume}{444}, \bibinfo{year}{2021}.
\bibitem[{Rueda-Ramírez et~al.(2022)Rueda-Ramírez, Pazner, and Gassner}]{RUEDARAMIREZ2022}
\bibinfo{author}{A.~M. Rueda-Ramírez}, \bibinfo{author}{W.~Pazner}, \bibinfo{author}{G.~J. Gassner},
\newblock \bibinfo{title}{Subcell limiting strategies for discontinuous {G}alerkin spectral element methods},
\newblock \bibinfo{journal}{Computers \& Fluids} \bibinfo{volume}{247} (\bibinfo{year}{2022}) \bibinfo{pages}{105627}.
\bibitem[{Rueda-Ram{\'\i}rez et~al.(2023)Rueda-Ram{\'\i}rez, Bolm, Kuzmin, and Gassner}]{rueda2023monolithic}
\bibinfo{author}{A.~M. Rueda-Ram{\'\i}rez}, \bibinfo{author}{B.~Bolm}, \bibinfo{author}{D.~Kuzmin}, \bibinfo{author}{G.~J. Gassner},
\newblock \bibinfo{title}{Monolithic convex limiting for {L}egendre-{G}auss-{L}obatto discontinuous {G}alerkin spectral element methods},
\newblock \bibinfo{journal}{arXiv preprint arXiv:2303.00374}  (\bibinfo{year}{2023}).
\bibitem[{Mateo-Gab{\'\i}n et~al.(2023)Mateo-Gab{\'\i}n, Rueda-Ram{\'\i}rez, Valero, and Rubio}]{mateo2023flux}
\bibinfo{author}{A.~Mateo-Gab{\'\i}n}, \bibinfo{author}{A.~M. Rueda-Ram{\'\i}rez}, \bibinfo{author}{E.~Valero}, \bibinfo{author}{G.~Rubio},
\newblock \bibinfo{title}{A flux-differencing formulation with {G}auss nodes},
\newblock \bibinfo{journal}{Journal of Computational Physics} \bibinfo{volume}{489} (\bibinfo{year}{2023}) \bibinfo{pages}{112298}.
\bibitem[{Kuzmin(2020)}]{kuzmin2020monolithic}
\bibinfo{author}{D.~Kuzmin},
\newblock \bibinfo{title}{Monolithic convex limiting for continuous finite element discretizations of hyperbolic conservation laws},
\newblock \bibinfo{journal}{Computer Methods in Applied Mechanics and Engineering} \bibinfo{volume}{361} (\bibinfo{year}{2020}) \bibinfo{pages}{112804}.
\bibitem[{Guermond et~al.(2019)Guermond, Popov, and Tomas}]{guermond2019invariant}
\bibinfo{author}{J.-L. Guermond}, \bibinfo{author}{B.~Popov}, \bibinfo{author}{I.~Tomas},
\newblock \bibinfo{title}{Invariant domain preserving discretization-independent schemes and convex limiting for hyperbolic systems},
\newblock \bibinfo{journal}{Computer Methods in Applied Mechanics and Engineering} \bibinfo{volume}{347} (\bibinfo{year}{2019}) \bibinfo{pages}{143--175}.
\bibitem[{Pazner(2020)}]{Pazner2020}
\bibinfo{author}{W.~Pazner},
\newblock \bibinfo{title}{{Sparse Invariant Domain Preserving Discontinuous Galerkin Methods With Subcell Convex Limiting}},
\newblock \bibinfo{journal}{arXiv}  (\bibinfo{year}{2020}).
\bibitem[{Lin and Chan(2024)}]{lin2024high}
\bibinfo{author}{Y.~Lin}, \bibinfo{author}{J.~Chan},
\newblock \bibinfo{title}{High order entropy stable discontinuous {G}alerkin spectral element methods through subcell limiting},
\newblock \bibinfo{journal}{Journal of Computational Physics} \bibinfo{volume}{498} (\bibinfo{year}{2024}) \bibinfo{pages}{112677}.
\bibitem[{Ranocha(2017)}]{ranocha2017shallow}
\bibinfo{author}{H.~Ranocha},
\newblock \bibinfo{title}{Shallow water equations: {S}plit-form, entropy stable, well-balanced, and positivity preserving numerical methods},
\newblock \bibinfo{journal}{GEM-International Journal on Geomathematics} \bibinfo{volume}{8} (\bibinfo{year}{2017}) \bibinfo{pages}{85--133}.
\bibitem[{Winters et~al.(2025)Winters, Ersing, Ranocha, and Schlottke-Lakemper}]{winters2025trixi}
\bibinfo{author}{A.~R. Winters}, \bibinfo{author}{P.~Ersing}, \bibinfo{author}{H.~Ranocha}, \bibinfo{author}{M.~Schlottke-Lakemper}, \bibinfo{title}{{TrixiShallowWater.jl}: {S}hallow water simulations with {T}rixi.jl}, \bibinfo{howpublished}{\url{https://github.com/trixi-framework/TrixiShallowWater.jl}}, \bibinfo{year}{2025}.
\bibitem[{Ersing et~al.(2025)Ersing, Goldberg, and Winters}]{ersing2025entropy}
\bibinfo{author}{P.~Ersing}, \bibinfo{author}{S.~Goldberg}, \bibinfo{author}{A.~R. Winters},
\newblock \bibinfo{title}{Entropy stable hydrostatic reconstruction schemes for shallow water systems},
\newblock \bibinfo{journal}{Journal of Computational Physics}  (\bibinfo{year}{2025}) \bibinfo{pages}{113802}.
\bibitem[{Fjordholm et~al.(2011)Fjordholm, Mishra, and Tadmor}]{fjordholm2011well}
\bibinfo{author}{U.~S. Fjordholm}, \bibinfo{author}{S.~Mishra}, \bibinfo{author}{E.~Tadmor},
\newblock \bibinfo{title}{Well-balanced and energy stable schemes for the shallow water equations with discontinuous topography},
\newblock \bibinfo{journal}{Journal of Computational Physics} \bibinfo{volume}{230} (\bibinfo{year}{2011}) \bibinfo{pages}{5587--5609}.
\bibitem[{Rueda-Ram{\'\i}rez and Gassner(2024)}]{rueda2024flux}
\bibinfo{author}{A.~M. Rueda-Ram{\'\i}rez}, \bibinfo{author}{G.~J. Gassner},
\newblock \bibinfo{title}{A flux-differencing formula for split-form summation by parts discretizations of non-conservative systems: {A}pplications to subcell limiting for magneto-hydrodynamics},
\newblock \bibinfo{journal}{Journal of Computational Physics} \bibinfo{volume}{496} (\bibinfo{year}{2024}) \bibinfo{pages}{112607}.
\bibitem[{Careaga et~al.(2026)Careaga, Ersing, Koellermeier, and Winters}]{careaga2026entropy}
\bibinfo{author}{J.~Careaga}, \bibinfo{author}{P.~Ersing}, \bibinfo{author}{J.~Koellermeier}, \bibinfo{author}{A.~R. Winters},
\newblock \bibinfo{title}{Entropy analysis and entropy stable {DG} methods for the shallow water moment equations},
\newblock \bibinfo{journal}{arXiv preprint arXiv:2602.06513}  (\bibinfo{year}{2026}).
\bibitem[{Ranocha et~al.(2022)Ranocha, Schlottke-Lakemper, Winters, Faulhaber, Chan, and Gassner}]{ranocha2022adaptive}
\bibinfo{author}{H.~Ranocha}, \bibinfo{author}{M.~Schlottke-Lakemper}, \bibinfo{author}{A.~R. Winters}, \bibinfo{author}{E.~Faulhaber}, \bibinfo{author}{J.~Chan}, \bibinfo{author}{G.~J. Gassner},
\newblock \bibinfo{title}{Adaptive numerical simulations with {T}rixi.jl: {A} case study of {J}ulia for scientific computing},
\newblock \bibinfo{journal}{Proceedings of the JuliaCon Conferences} \bibinfo{volume}{1} (\bibinfo{year}{2022}) \bibinfo{pages}{77}.
\bibitem[{Schlottke-Lakemper et~al.(2021)Schlottke-Lakemper, Winters, Ranocha, and Gassner}]{schlottkelakemper2021purely}
\bibinfo{author}{M.~Schlottke-Lakemper}, \bibinfo{author}{A.~R. Winters}, \bibinfo{author}{H.~Ranocha}, \bibinfo{author}{G.~J. Gassner},
\newblock \bibinfo{title}{A purely hyperbolic discontinuous {G}alerkin approach for self-gravitating gas dynamics},
\newblock \bibinfo{journal}{Journal of Computational Physics} \bibinfo{volume}{442} (\bibinfo{year}{2021}) \bibinfo{pages}{110467}.
\bibitem[{Schlottke-Lakemper et~al.(2025)Schlottke-Lakemper, Gassner, Ranocha, Winters, Chan, and Rueda-Ramírez}]{schlottkelakemper2025trixi}
\bibinfo{author}{M.~Schlottke-Lakemper}, \bibinfo{author}{G.~J. Gassner}, \bibinfo{author}{H.~Ranocha}, \bibinfo{author}{A.~R. Winters}, \bibinfo{author}{J.~Chan}, \bibinfo{author}{A.~Rueda-Ramírez}, \bibinfo{title}{{T}rixi.jl: {A}daptive high-order numerical simulations of hyperbolic {PDE}s in {J}ulia}, \bibinfo{howpublished}{\url{https://github.com/trixi-framework/Trixi.jl}}, \bibinfo{year}{2025}.
\bibitem[{Powell et~al.(1999)Powell, Roe, Linde, Gombosi, and {De Zeeuw}}]{Powell2001}
\bibinfo{author}{K.~G. Powell}, \bibinfo{author}{P.~L. Roe}, \bibinfo{author}{T.~J. Linde}, \bibinfo{author}{T.~I. Gombosi}, \bibinfo{author}{D.~L. {De Zeeuw}},
\newblock \bibinfo{title}{{A Solution-Adaptive Upwind Scheme for Ideal Magnetohydrodynamics}},
\newblock \bibinfo{journal}{Journal of Computational Physics} \bibinfo{volume}{154} (\bibinfo{year}{1999}) \bibinfo{pages}{284--309}.
\bibitem[{Dedner et~al.(2002)Dedner, Kemm, Kr{\"{o}}ner, Munz, Schnitzer, and Wesenberg}]{Dedner2002}
\bibinfo{author}{A.~Dedner}, \bibinfo{author}{F.~Kemm}, \bibinfo{author}{D.~Kr{\"{o}}ner}, \bibinfo{author}{C.~D. Munz}, \bibinfo{author}{T.~Schnitzer}, \bibinfo{author}{M.~Wesenberg},
\newblock \bibinfo{title}{{Hyperbolic divergence cleaning for the MHD equations}},
\newblock \bibinfo{journal}{Journal of Computational Physics} \bibinfo{volume}{175} (\bibinfo{year}{2002}) \bibinfo{pages}{645--673}.
\bibitem[{Derigs et~al.(2017)Derigs, Winters, Gassner, and Walch}]{Derigs2017}
\bibinfo{author}{D.~Derigs}, \bibinfo{author}{A.~R. Winters}, \bibinfo{author}{G.~J. Gassner}, \bibinfo{author}{S.~Walch},
\newblock \bibinfo{title}{{A novel averaging technique for discrete entropy-stable dissipation operators for ideal MHD}},
\newblock \bibinfo{journal}{Journal of Computational Physics} \bibinfo{volume}{330} (\bibinfo{year}{2017}) \bibinfo{pages}{624--632}.
\bibitem[{Baer and Nunziato(1986)}]{baer1986two}
\bibinfo{author}{M.~R. Baer}, \bibinfo{author}{J.~W. Nunziato},
\newblock \bibinfo{title}{A two-phase mixture theory for the deflagration-to-detonation transition ({DDT}) in reactive granular materials},
\newblock \bibinfo{journal}{International journal of multiphase flow} \bibinfo{volume}{12} (\bibinfo{year}{1986}) \bibinfo{pages}{861--889}.
\bibitem[{Chandrashekar and Zenk(2017)}]{chandrashekar2017well}
\bibinfo{author}{P.~Chandrashekar}, \bibinfo{author}{M.~Zenk},
\newblock \bibinfo{title}{Well-balanced nodal discontinuous {G}alerkin method for {E}uler equations with gravity},
\newblock \bibinfo{journal}{Journal of Scientific Computing} \bibinfo{volume}{71} (\bibinfo{year}{2017}) \bibinfo{pages}{1062--1093}.
\bibitem[{Chen et~al.(1996)Chen, Yu, and Wang}]{chen1996reducing}
\bibinfo{author}{W.~Chen}, \bibinfo{author}{Y.~Yu}, \bibinfo{author}{X.~Wang},
\newblock \bibinfo{title}{Reducing the computational requirements of the differential quadrature method},
\newblock \bibinfo{journal}{Numerical Methods for Partial Differential Equations: An International Journal} \bibinfo{volume}{12} (\bibinfo{year}{1996}) \bibinfo{pages}{565--577}.
\bibitem[{Fisher et~al.(2013)Fisher, Carpenter, Nordstr{\"{o}}m, Yamaleev, and Swanson}]{Fisher2013}
\bibinfo{author}{T.~C. Fisher}, \bibinfo{author}{M.~H. Carpenter}, \bibinfo{author}{J.~Nordstr{\"{o}}m}, \bibinfo{author}{N.~K. Yamaleev}, \bibinfo{author}{C.~Swanson},
\newblock \bibinfo{title}{{Discretely conservative finite-difference formulations for nonlinear conservation laws in split form: Theory and boundary conditions}},
\newblock \bibinfo{journal}{Journal of Computational Physics} \bibinfo{volume}{234} (\bibinfo{year}{2013}) \bibinfo{pages}{353--375}.
\bibitem[{Renac(2019)}]{Renac2019}
\bibinfo{author}{F.~Renac},
\newblock \bibinfo{title}{{Entropy stable DGSEM for nonlinear hyperbolic systems in nonconservative form with application to two-phase flows}},
\newblock \bibinfo{journal}{Journal of Computational Physics} \bibinfo{volume}{382} (\bibinfo{year}{2019}) \bibinfo{pages}{1--26}.
\bibitem[{Vol'pert(1967)}]{vol1967spaces}
\bibinfo{author}{A.~I. Vol'pert},
\newblock \bibinfo{title}{The spaces {BV} and quasilinear equations},
\newblock \bibinfo{journal}{Mathematics of the USSR-Sbornik} \bibinfo{volume}{2} (\bibinfo{year}{1967}) \bibinfo{pages}{225}.
\bibitem[{Rueda-Ram{\'{i}}rez and Gassner(2021)}]{Rueda-Ramirez2021}
\bibinfo{author}{A.~M. Rueda-Ram{\'{i}}rez}, \bibinfo{author}{G.~J. Gassner},
\newblock \bibinfo{title}{{A Subcell Finite Volume Positivity-Preserving Limiter for DGSEM Discretizations of the Euler Equations}},
\newblock in: \bibinfo{booktitle}{WCCM-ECCOMAS2020}, pp. \bibinfo{pages}{1--12}.
\bibitem[{Shu and Osher(1988)}]{shu1988efficient}
\bibinfo{author}{C.-W. Shu}, \bibinfo{author}{S.~Osher},
\newblock \bibinfo{title}{Efficient implementation of essentially non-oscillatory shock-capturing schemes},
\newblock \bibinfo{journal}{Journal of computational physics} \bibinfo{volume}{77} (\bibinfo{year}{1988}) \bibinfo{pages}{439--471}.
\bibitem[{Geuzaine and Remacle(2009)}]{geuzaine2009gmsh}
\bibinfo{author}{C.~Geuzaine}, \bibinfo{author}{J.-F. Remacle},
\newblock \bibinfo{title}{Gmsh: {A} {3-D} finite element mesh generator with built-in pre-and post-processing facilities},
\newblock \bibinfo{journal}{International journal for numerical methods in engineering} \bibinfo{volume}{79} (\bibinfo{year}{2009}) \bibinfo{pages}{1309--1331}.
\bibitem[{Kopriva et~al.(2024{\natexlab{a}})Kopriva, Winters, Schlottke-Lakemper, Schoonover, and Ranocha}]{kopriva2024hohqmesh:joss}
\bibinfo{author}{D.~A. Kopriva}, \bibinfo{author}{A.~R. Winters}, \bibinfo{author}{M.~Schlottke-Lakemper}, \bibinfo{author}{J.~A. Schoonover}, \bibinfo{author}{H.~Ranocha},
\newblock \bibinfo{title}{{HOHQM}esh: An all quadrilateral/hexahedral unstructured mesh generator for high order elements},
\newblock \bibinfo{journal}{Journal of Open Source Software} \bibinfo{volume}{9} (\bibinfo{year}{2024}{\natexlab{a}}) \bibinfo{pages}{7476}.
\bibitem[{Kopriva et~al.(2024{\natexlab{b}})Kopriva, Winters, Schlottke-Lakemper, and Ranocha}]{kopriva2024hohqmeshjl}
\bibinfo{author}{D.~A. Kopriva}, \bibinfo{author}{A.~R. Winters}, \bibinfo{author}{M.~Schlottke-Lakemper}, \bibinfo{author}{H.~Ranocha}, \bibinfo{title}{{HOHQM}esh.jl: A {J}ulia frontend to the {F}ortran-based {HOHQM}esh mesh generator for high order elements}, \bibinfo{howpublished}{\url{https://github.com/trixi-framework/HOHQMesh.jl}}, \bibinfo{year}{2024}{\natexlab{b}}.
\bibitem[{Ahrens et~al.(2005)Ahrens, Geveci, and Law}]{ahrens2005paraview}
\bibinfo{author}{J.~Ahrens}, \bibinfo{author}{B.~Geveci}, \bibinfo{author}{C.~Law},
\newblock \bibinfo{title}{{ParaView}: {A}n end-user tool for large-data visualization},
\newblock in: \bibinfo{booktitle}{The Visualization Handbook}, \bibinfo{publisher}{Elsevier}, \bibinfo{year}{2005}, pp. \bibinfo{pages}{717--731}.
\bibitem[{Danisch and Krumbiegel(2021)}]{Makie2021}
\bibinfo{author}{S.~Danisch}, \bibinfo{author}{J.~Krumbiegel},
\newblock \bibinfo{title}{{Makie.jl}: {F}lexible high-performance data visualization for {Julia}},
\newblock \bibinfo{journal}{Journal of Open Source Software} \bibinfo{volume}{6} (\bibinfo{year}{2021}) \bibinfo{pages}{3349}.
\bibitem[{Rueda-Ram\'{i}rez et~al.(2026)Rueda-Ram\'{i}rez, Ersing, Winters, and Gassner}]{rueda2026numericalRepro}
\bibinfo{author}{A.~M. Rueda-Ram\'{i}rez}, \bibinfo{author}{P.~Ersing}, \bibinfo{author}{A.~R. Winters}, \bibinfo{author}{G.~J. Gassner}, \bibinfo{title}{Reproducibility repository for "{W}ell-balanced subcell limiting for discontinuous {G}alerkin discretizations of the shallow-water equations"}, \bibinfo{howpublished}{\url{https://doi.org/10.5281/zenodo.19913123}}, \bibinfo{year}{2026}.
\bibitem[{Zalesak(1979)}]{zalesak1979fully}
\bibinfo{author}{S.~T. Zalesak},
\newblock \bibinfo{title}{Fully multidimensional flux-corrected transport algorithms for fluids},
\newblock \bibinfo{journal}{Journal of Computational Physics} \bibinfo{volume}{31} (\bibinfo{year}{1979}) \bibinfo{pages}{335--362}.
\bibitem[{Kuzmin et~al.(2010)Kuzmin, M{\"o}ller, Shadid, and Shashkov}]{kuzmin2010failsafe}
\bibinfo{author}{D.~Kuzmin}, \bibinfo{author}{M.~M{\"o}ller}, \bibinfo{author}{J.~N. Shadid}, \bibinfo{author}{M.~Shashkov},
\newblock \bibinfo{title}{Failsafe flux limiting and constrained data projections for equations of gas dynamics},
\newblock \bibinfo{journal}{Journal of Computational physics} \bibinfo{volume}{229} (\bibinfo{year}{2010}) \bibinfo{pages}{8766--8779}.
\bibitem[{Kuzmin et~al.(2012)Kuzmin, Löhner, and Turek}]{kuzmin2012}
\bibinfo{editor}{D.~Kuzmin}, \bibinfo{editor}{R.~Löhner}, \bibinfo{editor}{S.~Turek} (Eds.), \bibinfo{title}{Flux-Corrected Transport: {P}rinciples, Algorithms, and Applications}, \bibinfo{publisher}{Springer}, \bibinfo{address}{Dordrecht}, \bibinfo{edition}{2} edition, \bibinfo{year}{2012}.
\bibitem[{Lohmann et~al.(2017)Lohmann, Kuzmin, Shadid, and Mabuza}]{lohmann2017}
\bibinfo{author}{C.~Lohmann}, \bibinfo{author}{D.~Kuzmin}, \bibinfo{author}{J.~N. Shadid}, \bibinfo{author}{S.~Mabuza},
\newblock \bibinfo{title}{Flux-corrected transport algorithms for continuous {G}alerkin methods based on high order {B}ernstein finite elements},
\newblock \bibinfo{journal}{Journal of Computational Physics} \bibinfo{volume}{344} (\bibinfo{year}{2017}) \bibinfo{pages}{151--186}.
\bibitem[{Persson and Peraire(2006)}]{Persson2006}
\bibinfo{author}{P.-O. Persson}, \bibinfo{author}{J.~Peraire},
\newblock \bibinfo{title}{{Sub-Cell Shock Capturing for Discontinuous Galerkin Methods}},
\newblock \bibinfo{journal}{44th AIAA Aerospace Sciences Meeting and Exhibit}  (\bibinfo{year}{2006}) \bibinfo{pages}{1--13}.
\bibitem[{Soares-Fraz{\~a}o and Zech(2007)}]{soares2007experimental}
\bibinfo{author}{S.~Soares-Fraz{\~a}o}, \bibinfo{author}{Y.~Zech},
\newblock \bibinfo{title}{Experimental study of dam-break flow against an isolated obstacle},
\newblock \bibinfo{journal}{Journal of Hydraulic Research} \bibinfo{volume}{45} (\bibinfo{year}{2007}) \bibinfo{pages}{27--36}.
\bibitem[{Chertock et~al.(2015)Chertock, Cui, Kurganov, and Wu}]{chertock2015well}
\bibinfo{author}{A.~Chertock}, \bibinfo{author}{S.~Cui}, \bibinfo{author}{A.~Kurganov}, \bibinfo{author}{T.~Wu},
\newblock \bibinfo{title}{Well-balanced positivity preserving central-upwind scheme for the shallow water system with friction terms},
\newblock \bibinfo{journal}{International Journal for numerical methods in fluids} \bibinfo{volume}{78} (\bibinfo{year}{2015}) \bibinfo{pages}{355--383}.
\bibitem[{Bonev et~al.(2018)Bonev, Hesthaven, Giraldo, and Kopera}]{bonev2018discontinuous}
\bibinfo{author}{B.~Bonev}, \bibinfo{author}{J.~S. Hesthaven}, \bibinfo{author}{F.~X. Giraldo}, \bibinfo{author}{M.~A. Kopera},
\newblock \bibinfo{title}{Discontinuous {G}alerkin scheme for the spherical shallow water equations with applications to tsunami modeling and prediction},
\newblock \bibinfo{journal}{Journal of Computational Physics} \bibinfo{volume}{362} (\bibinfo{year}{2018}) \bibinfo{pages}{425--448}.
\bibitem[{Ginting(2019)}]{ginting2019central}
\bibinfo{author}{B.~M. Ginting},
\newblock \bibinfo{title}{Central-upwind scheme for {2D} turbulent shallow flows using high-resolution meshes with scalable wall functions},
\newblock \bibinfo{journal}{Computers \& Fluids} \bibinfo{volume}{179} (\bibinfo{year}{2019}) \bibinfo{pages}{394--421}.
\bibitem[{Cea and Blad{\'e}(2015)}]{cea2015simple}
\bibinfo{author}{L.~Cea}, \bibinfo{author}{E.~Blad{\'e}},
\newblock \bibinfo{title}{A simple and efficient unstructured finite volume scheme for solving the shallow water equations in overland flow applications},
\newblock \bibinfo{journal}{Water resources research} \bibinfo{volume}{51} (\bibinfo{year}{2015}) \bibinfo{pages}{5464--5486}.
\bibitem[{Hou et~al.(2013)Hou, Liang, Simons, and Hinkelmann}]{hou20132d}
\bibinfo{author}{J.~Hou}, \bibinfo{author}{Q.~Liang}, \bibinfo{author}{F.~Simons}, \bibinfo{author}{R.~Hinkelmann},
\newblock \bibinfo{title}{A 2{D} well-balanced shallow flow model for unstructured grids with novel slope source term treatment},
\newblock \bibinfo{journal}{Advances in Water Resources} \bibinfo{volume}{52} (\bibinfo{year}{2013}) \bibinfo{pages}{107--131}.
\bibitem[{Ayog et~al.(2021)Ayog, Kesserwani, Shaw, Sharifian, and Bau}]{ayog2021second}
\bibinfo{author}{J.~L. Ayog}, \bibinfo{author}{G.~Kesserwani}, \bibinfo{author}{J.~Shaw}, \bibinfo{author}{M.~K. Sharifian}, \bibinfo{author}{D.~Bau},
\newblock \bibinfo{title}{Second-order discontinuous {G}alerkin flood model: {C}omparison with industry-standard finite volume models},
\newblock \bibinfo{journal}{Journal of Hydrology} \bibinfo{volume}{594} (\bibinfo{year}{2021}) \bibinfo{pages}{125924}.
\bibitem[{Kuzmin et~al.(2022)Kuzmin, Hajduk, and Rupp}]{kuzmin2022limiter}
\bibinfo{author}{D.~Kuzmin}, \bibinfo{author}{H.~Hajduk}, \bibinfo{author}{A.~Rupp},
\newblock \bibinfo{title}{Limiter-based entropy stabilization of semi-discrete and fully discrete schemes for nonlinear hyperbolic problems},
\newblock \bibinfo{journal}{Computer Methods in Applied Mechanics and Engineering} \bibinfo{volume}{389} (\bibinfo{year}{2022}) \bibinfo{pages}{114428}.
\bibitem[{Christner and Chan(2025)}]{christner2025entropy}
\bibinfo{author}{B.~Christner}, \bibinfo{author}{J.~Chan},
\newblock \bibinfo{title}{Entropy stable finite difference methods via entropy correction artificial viscosity and knapsack limiting},
\newblock \bibinfo{journal}{arXiv preprint arXiv:2508.21226}  (\bibinfo{year}{2025}).

\end{thebibliography}

\section*{Appendices}
\appendix

We describe a generic 2D version of the node-wise subcell limiting scheme in Appendix~\ref{app:2d}. Specific details regarding the 2D shallow water equations are provided in Appendix~\ref{app:swe_2d}. Finally, we describe the entropic properties of the node-wise subcell limiting formulation in Appendix~\ref{app:entProps}.

\section{Extension to 2D discretizations and unstructured curvilinear meshes}\label{app:2d}

The extension to two-dimensional discretizations and unstructured curvilinear meshes follows analogously to \cite[Appendix B]{rueda2024flux}. 
A two-dimensional extension of system \eqref{eq:noncons-system} is given by
\begin{equation} \label{eq:noncons-system_2D}
\partial_t \mathbf{u} 
+ \Nabla \cdot \blocktensor{f} (\mathbf{u}) 
+ \noncon(\mathbf{u}, \Nabla \mathbf{u})
= \state{0},
\end{equation}
where we introduced the block vector notation $\blocktensor{f} = \left(\state{f}_1, \state{f}_2\right)^T$ for fluxes in the $x$ and $y$ directions.

Using a tensor-product construction of \eqref{eq:DGSEM}, we obtain a high-order DGSEM discretization on two-dimensional curvilinear meshes. 
The resulting scheme reads \cite{rueda2023entropy}
\begin{align}\label{eq:dgsem_2d}
J_{ij} \omega_{ij} \dot{\state{u}}^{\DG}_{ij} 
&+ 
\omega_{j} \left[
  \sum_{m=0}^N S_{im} 
    \left(
    \tilde{\state{f}}^{1*}_{(i,m)j}
    +\numnonconsSxi{\tilde{\Jan}}_{(i,m)j}
    \right)
 + \delta_{iN} \left(
  \numfluxb{f}_{(N,R)j}
    +\numnonconsD{\Jan}_{(N,R)j}
  \right)
 - \delta_{i0} \left(
  \numfluxb{f}_{(0,L)j}
    +\numnonconsD{\Jan}_{(0,L)j}
  \right)
\right]
\nonumber\\
&+ 
\omega_{i} \left[
  \sum_{m=0}^N S_{jm}
    \left(
    \tilde{\state{f}}^{2*}_{i(j,m)}
    +\numnonconsSeta{\tilde{\Jan}}_{i(j,m)}
    \right)
 + \delta_{jN} \left(
  \numfluxb{f}_{i(N,R)}
    +\numnonconsD{\Jan}_{i(N,R)}
  \right)
 - \delta_{j0} \left(
  \numfluxb{f}_{i(0,L)}
    +\numnonconsD{\Jan}_{i(0,L)}
  \right)
\right]= \state{0}.
\end{align}

Here, $\tilde{\state{f}}^{k*}$ denotes the two-point contravariant numerical volume flux in the $k$-th direction, computed as the product of a two-point flux and the averaged contravariant metric terms ($\vec{a}^k \approx \Nabla \xi^k$),
\begin{align}
\tilde{\state{f}}^{1*}_{(i,m)j} &:= \blocktensor{f}^{*}(\state{u}_{ij}, \state{u}_{mj}) \cdot \avg{J\vec{a}^1}_{(i,m)j}, ~~~~~
\tilde{\state{f}}^{2*}_{i(j,m)} := \blocktensor{f}^{*}(\state{u}_{ij}, \state{u}_{im}) \cdot \avg{J\vec{a}^2}_{i(j,m)},
\end{align}
while $\numfluxb{f}$ is the corresponding surface contribution
\begin{equation}
    \begin{aligned}
    \hat{\state{f}}_{(0,L)j} &:= \hat{\blocktensor{\state{f}}}(\state{u}_{0j}, \state{u}_{Lj}) \cdot \left(J\vec{a}^1\right)_{0j},
    \quad
    \hat{\state{f}}_{(N,R)j} := \hat{\blocktensor{\state{f}}} (\state{u}_{Nj}, \state{u}_{Rj}) \cdot \left(J\vec{a}^1\right)_{Nj},
    \\
    \hat{\state{f}}_{i(0,L)} &:= \hat{\blocktensor{\state{f}}} (\state{u}_{i0}, \state{u}_{iL}) \cdot \left(J\vec{a}^2\right)_{i0},
    \quad
    \hat{\state{f}}_{i(N,R)} := \hat{\blocktensor{\state{f}}} (\state{u}_{iN}, \state{u}_{iR}) \cdot \left(J\vec{a}^2\right)_{iN}.
    \end{aligned}
\end{equation}

Analogously, the quantities $\tilde{\Jan}^{k\star}$ denote two-point contravariant non-conservative volume terms, while $\numnonconsD{\Jan}$ is the non-conservative surface contribution.
Following the one-dimensional analysis (see Proposition \ref{prop:fluxdiff}), if the non-conservative terms admit a factorization into local and jump contributions, with non-conservative volume terms
\begin{equation}
\begin{aligned}
\tilde{\Jan}^{1\star}_{(i,m)j} :=\ & \tilde{\Jan}^{1\star} \left(\state{u}_{ij}, \state{u}_{mj}, (J\vec{a}^1)_{ij}, (J\vec{a}^1)_{mj}\right) = \tilde{\Jan}^{1,\mathrm{loc}}_{ij} \circ \tilde{\Jan}^{1,\mathrm{jump}}_{(i,m)j},
\\
\tilde{\Jan}^{2\star}_{i(j,m)} :=\ & \tilde{\Jan}^{2\star} \left(\state{u}_{ij}, \state{u}_{im}, (J\vec{a}^2)_{ij}, (J\vec{a}^2)_{im}\right) = \tilde{\Jan}^{2,\mathrm{loc}}_{ij} \circ \tilde{\Jan}^{2,\mathrm{jump}}_{i(j,m)},
\end{aligned}
\end{equation}
and corresponding surface terms
\begin{equation}
\begin{aligned}
\numnonconsDxi{\Jan}_{(0,L)j} :=\ &\tilde{\Jan}^{1\star} \left(\state{u}_{0j}, \state{u}_{Lj}, (J\vec{a}^1)_{0j}, (J\vec{a}^1)_{0j}\right),
\quad
\numnonconsDxi{\Jan}_{(N,R)j} :=\ \tilde{\Jan}^{1\star} \left(\state{u}_{Nj}, \state{u}_{Rj}, (J\vec{a}^1)_{Nj}, (J\vec{a}^1)_{Nj}\right),\\
\numnonconsDeta{\Jan}_{i(0,L)} :=\ &\tilde{\Jan}^{2\star} \left(\state{u}_{i0}, \state{u}_{iL}, (J\vec{a}^2)_{i0}, (J\vec{a}^2)_{i0}\right),
\quad
\numnonconsDeta{\Jan}_{i(N,R)} :=\ \tilde{\Jan}^{2\star} \left(\state{u}_{iN}, \state{u}_{iR}, (J\vec{a}^2)_{iN}, (J\vec{a}^2)_{iN}\right),
\end{aligned}
\end{equation}
then \eqref{eq:dgsem_2d} can be rewritten in flux-differencing form,
\begin{equation} \label{eq:fluxdiff2d}
    J_{ij} \dot{\state{u}}^{\DG}_{ij} = 
        \frac{1}{\omega_i}
        \left(
        \stateG{\Gamma}^{1,\DG}_{(i,i-1)j}
        -\stateG{\Gamma}^{1,\DG}_{(i,i+1)j}
        \right)
        +
        \frac{1}{\omega_j}
        \left(
        \stateG{\Gamma}^{2,\DG}_{i(j,j-1)}
        -\stateG{\Gamma}^{2,\DG}_{i(j,j+1)}
        \right)
    ,
    \qquad
    i,j=0, \ldots, N.
\end{equation}
The staggered flux terms have the following explicit expressions:
\begin{align}
\stateG{\Gamma}^{1,\DG}_{(0,-1)j}  =& \  \numfluxb{f}_{(0,L)j} + \numnonconsD{\tilde{\Jan}}_{(0,L)j}, 
\label{eq:leftFlux_xi}
\\
\stateG{\Gamma}^{1,\DG}_{(i,k)j} =& \   \sum_{l=0}^{\min(i,k)} \sum_{m=0}^N S_{lm} \tilde{\state{f}}^{1*}_{(l,m)j}+\tilde{\Jan}^{1,\sloc}_{ij} \circ \sum_{l=0}^{\min(i,k)} \sum_{m=0}^N S_{lm} \tilde{\Jan}^{1,\sjump}_{(l,m)j} 
+ 2 \tilde{\Jan}^{1,\sloc}_{ij} \circ \tilde{\Jan}^{1,\sjump}_{(i,0)j}, 
& \nonumber\\
&\forall (i, k) = (i,i \pm 1) \setminus \{(0,-1), (N,N+1) \},
\label{eq:internFlux_xi}\\
\stateG{\Gamma}^{1,\DG}_{(N,N+1)j} =& \  \numfluxb{f}_{(N,R)j} +  \numnonconsD{\tilde{\Jan}}_{(N,R)j},
\label{eq:rightFlux_xi}
\\
\stateG{\Gamma}^{2,\DG}_{i(0,-1)}  =& \  \numfluxb{f}_{i(0,L)} + \numnonconsD{\tilde{\Jan}}_{i(0,L)}, 
\label{eq:leftFlux_eta}
\\
\stateG{\Gamma}^{2,\DG}_{i(j,k)} =& \   \sum_{l=0}^{\min(j,k)} \sum_{m=0}^N S_{lm} \tilde{\state{f}}^{2*}_{i(l,m)}+\tilde{\Jan}^{2,\sloc}_{ij} \circ \sum_{l=0}^{\min(j,k)} \sum_{m=0}^N S_{lm} \tilde{\Jan}^{2,\sjump}_{i(l,m)} 
+ 2 \tilde{\Jan}^{2,\sloc}_{ij} \circ \tilde{\Jan}^{2,\sjump}_{i(j,0)}, 
& \nonumber\\
&\forall (j, k) = (j,j \pm 1) \setminus \{(0,-1), (N,N+1) \},
\label{eq:internFlux_eta}\\
\stateG{\Gamma}^{2,\DG}_{i(N,N+1)} =& \  \numfluxb{f}_{i(N,R)} +  \numnonconsD{\tilde{\Jan}}_{i(N,R)}.
\label{eq:rightFlux_eta}
\end{align}

\section{Well-balanced Formulation for the 2D-Shallow Water Equations}\label{app:swe_2d}

As discussed in Corollary~\ref{cor:nodewise_balance}, the novel flux-differencing formula in local-jump formulation introduced in Section~\ref{sec:fluxdiff} is high-order and equilibrium preserving, provided that fluxes and non-conservative terms vanish node-wise at the equilibrium state.
In the one-dimensional setting this can be used to construct an equilibrium preserving scheme for the shallow water equations if the two-point fluxes \eqref{eq:ersing_fluxes} are used.

In the following, we show how this formulation can be adapted for the 2D discretization on unstructured curvilinear meshes described in Appendix~{\ref{app:2d}}. We consider the following formulation of the two-dimensional shallow-water equations, in which the pressure contribution is separated from the flux and incorporated into the non-conservative term:
\begin{equation}\label{eq:swe2d}
    \partial_t
	\begin{pmatrix}
		h \\ h v_1 \\ h v_2 
	\end{pmatrix}
	+
    \partial_x
    \underbrace{
	\begin{pmatrix}
		hv_1 \\
		hv_1^2\\
		hv_1v_2\\
	\end{pmatrix}}_{\state{f}^1}
	+
    \partial_y
    \underbrace{
	\begin{pmatrix}
		hv_2 \\
		hv_1v_2 \\
		hv_2^2\\
	\end{pmatrix}}_{\state{f}^2}
	+
    \underbrace{
    \begin{pmatrix}
		0 \\ 
		gh \partial_x (b+h)\\
		gh \partial_y (b+h)\\
	\end{pmatrix}}_{\stateG{\Upsilon}}
    = \state{0}
\end{equation}

Analogously to the one-dimensional setting, we use the two-point fluxes of \citet{ersing2025entropy} for the volume numerical fluxes:
\begin{align}\label{eq:ersing_fluxes_2d_again}
   \blocktensor{f}^{*}(\state{u}_{L}, \state{u}_{R})  =
   \begin{pmatrix}
       \state{f}^{1*}(\state{u}_{L}, \state{u}_{R})
       \\[0.1cm]
       \state{f}^{2*}(\state{u}_{L}, \state{u}_{R})
   \end{pmatrix} =
   \left(
    \begin{pmatrix}
        \avg{h v_1} \\[0.1cm]
        \avg{h v_1} \avg{v_1}\\[0.1cm]
        \avg{h v_1} \avg{v_2}
    \end{pmatrix},
    \begin{pmatrix}
        \avg{h v_2} \\[0.1cm]
        \avg{h v_1} \avg{v_2}\\[0.1cm]
        \avg{h v_2} \avg{v_2}
    \end{pmatrix}
    \right)^T,
\end{align}
and the following modification of the contravariant non-conservative two-point terms by \citet{ersing2025entropy}:
\begin{align}\label{eq:ersing_fluxes_noncons_2d}
    \numnonconsSxi{\tilde{\Jan}}_{(i,m)j} = 
     \underbrace{
     \begin{pmatrix}
        0 \\  \frac{g}{2}h_{ij}(Ja^1_1)_{ij} \\[0.1cm]  \frac{g}{2}h_{ij}(Ja^1_2)_{ij} 
    \end{pmatrix}
    }
    _{\tilde{\Jan}^{1,\sloc}_{ij}}
    \circ
    \underbrace{
    \begin{pmatrix}
        0 \\
        \jump{h + b}_{(i,m)j}\\
        \jump{h + b}_{(i,m)j}
    \end{pmatrix}
    }_{\tilde{\Jan}^{1,\sjump}_{(i,m)j}},
    \quad
    \numnonconsSeta{\tilde{\Jan}}_{i(j,m)} = 
    \underbrace{
     \begin{pmatrix}
        0 \\  \frac{g}{2}h_{ij}(Ja^2_1)_{ij} \\[0.1cm]  \frac{g}{2}h_{ij}(Ja^2_2)_{ij} 
    \end{pmatrix}
    }_{\tilde{\Jan}^{2,\sloc}_{ij}}
    \circ
    \underbrace{
    \begin{pmatrix}
    0 \\
    \jump{h + b}_{i(j,m)} \\
    \jump{h + b}_{i(j,m)}
    \end{pmatrix}
    }_{\tilde{\Jan}^{2,\sjump}_{i(j,m)}}.
\end{align}

Note that the proposed non-conservative terms are not algebraically identical to those in \citet{ersing2025entropy}, since the metric terms are incorporated as a local term rather than an average.
This modification is necessary to obtain a factorization into local and jump contributions. 
In general, the metric term can also be incorporated into the jump term. However, on curvilinear meshes this would not yield a well-balanced scheme, as the jump term does not vanish for the lake-at-rest steady state due to the spatially varying metric terms.

For the numerical interface fluxes, we use the same formulation as for the numerical volume fluxes \eqref{eq:ersing_fluxes_2d_again} together with a modified local Lax-Friedrichs dissipation term written in entropy variables that is well-balanced for discontinuous bottom topography.
The resulting numerical interface fluxes in direction $k$ is given by
\begin{align}
    \numfluxb{f}_{(i,k)j} =& \  \hat{\blocktensor{\state{f}}}(\state{u}_{ij}, \state{u}_{kj}) \cdot \left(J\vec{a}^1\right)_{ij} 
    - \frac{\sigma_{iN}}{2} \big|\lambda^{\max}_{(i,k)j}\big| \,\norm{(J\vec{a}^1)_{ij}}_2 \state{H}_{(i,k)j}\jump{\state{v}}_{(i,k)j},
    \\
    \numfluxb{f}_{i(j,k)} =& \ \hat{\blocktensor{\state{f}}}(\state{u}_{ij}, \state{u}_{ik}) \cdot \left(J\vec{a}^2\right)_{ij} 
    - \frac{\sigma_{jN}}{2} \big|\lambda^{\max}_{i(j,k)}\big| \, \norm{(J\vec{a}^2)_{ij}}_2 \state{H}_{i(j,k)}\jump{\state{v}}_{i(j,k)},
\end{align}
where $\lambda^{\max}_{(\cdot,\cdot)}$ denotes the maximum eigenvalue in direction between two states, $\state{v}(\state{u}) := (g(h+b)-0.5\|\vec{v}\|_2^2,\ v_1,\ v_2 )^T$ are the entropy variables, $\sigma_{ij}:=2\delta_{ij}-1$ is an auxiliary variable that ensures that the dissipation is applied in a conservative manner, and
\begin{equation}
    \state{H}_{(L,R)} := \state{H}(\state{u}_L, \state{u}_R) = \frac{1}{g}\begin{pmatrix}
        1 & \avg{v_1} & \avg{v_2} \\
        \avg{v_1} & g\avg{h} + \avg{v_1}^2 & \avg{v_1}\avg{v_2} \\
        \avg{v_2} & \avg{v_1}\avg{v_2} & g\avg{h} + \avg{v_2}^2
    \end{pmatrix},
\end{equation}
is a symmetric positive definite matrix evaluated between two states, which is a discrete version of the inverse Hessian of the total energy function for the shallow water equations.
The matrix $\state{H}$ accounts for the variable change $\jump{\state{u}} \simeq \state{H}{\jump{\state{v}}}$, where the relation is exact for constant bottom topography \cite{ranocha2017shallow}.

\section{Entropy Analysis}\label{app:entProps}
To analyze the entropic properties of the well-balanced, node-wise flux-differencing formulation it is convenient to rewrite the contravariant non-conservative two-point term. We factor the contravariant local term into non-conservative contributions and contravariant metric terms and use that the jump contribution is dimension-independent such that it can be written as a scalar to obtain
\begin{equation}
    \tilde{\Jan}^{1\star}_{(i,m)j} := \left(\blocktensor{\Jan}^{1,\sloc}_{ij} \cdot (J\vec{a}^1)_{ij}\right) \Phi^{1,\sjump}_{(i,m)j}, \quad
    \tilde{\Jan}^{2\star}_{i(j,m)} := \left(\blocktensor{\Jan}^{2,\sloc}_{ij} \cdot (J\vec{a}^2)_{ij}\right) \Phi^{2,\sjump}_{i(j,m)}, 
\end{equation}
where 
\begin{equation}
    \blocktensor{\Jan}^{1,\sloc}_{ij} = \blocktensor{\Jan}^{2,\sloc}_{ij} =
    \left(\begin{pmatrix} 0 \\ \frac{1}{2} g h_{ij} \\ 0 \end{pmatrix},
    \begin{pmatrix} 0 \\ 0 \\ \frac{1}{2} g h_{ij} \end{pmatrix}\right)^T,
    \quad
    \Phi^{1,\sjump}_{(\cdot, \cdot)} = \Phi^{2,\sjump}_{(\cdot, \cdot)} = \jump{h + b}_{(\cdot, \cdot)}. 
\end{equation}
The original fluxes and non-conservative terms of \citet{ersing2025entropy}, incorporating the averaged contravariant metric terms, are entropy conservative, i.e., they satisfy a generalized Tadmor condition:
\begin{equation}\label{eq:ent_prod_ersing}
    r_{(i,m)j} = \jump{\entVar}_{(i,m)j}^T 
    \tilde{\state{f}}^{1*}_{(i,m)j}
    +
    \entVar_{mj}^T \tilde{\Jan}^{1\star E}_{(m,i)j}
    -
    \entVar_{ij}^T \tilde{\Jan}^{1\star E}_{(i,m)j}
    - \avg{J\vec{a}^1}_{(i,m)jk}\cdot \jump{\blocktensor{\psi}}_{(i,m)jk} = 0,
\end{equation}
where $\entVar = \left(g(h+b)-0.5\|\vec{v}\|^2, v_1, v_2\right)^T$ are the entropy variables \cite{wintermeyer2017entropy, ranocha2017shallow, ersing2025entropy}.


To assess entropy conservation for the modified fluxes \eqref{eq:ersing_fluxes_2d_again}, \eqref{eq:ersing_fluxes_noncons_2d}, we analyze the corresponding entropy production terms. After many algebraic manipulation, the most important of which highlighted in orange below, we obtain
\begin{align*}
    r_{(i,m)j} =& \jump{\entVar}_{(i,m)j}^T 
    \tilde{\state{f}}^{1*}_{(i,m)j}
    +
    \entVar_{mj}^T \numnonconsSxi{\tilde{\Jan}}_{(m,i)j}
    -
    \entVar_{ij}^T \numnonconsSxi{\tilde{\Jan}}_{(i,m)j}
    - \avg{J\vec{a}^1}_{(i,m)jk}\cdot \jump{\blocktensor{\psi}}_{(i,m)jk}
    \\
    =& \jump{\entVar}_{(i,m)j}^T 
    \blocktensor{f}^{*}(\state{u}_{ij}, \state{u}_{mj}) \cdot \avg{J\vec{a}^1}_{(i,m)j}
    - \avg{J\vec{a}^1}_{(i,m)j}\cdot \jump{\blocktensor{\psi}}_{(i,m)j}\nonumber
    \\& \;
    +\entVar_{mj}^T \left(\blocktensor{\Jan}^{1,\sloc}_{mj} \cdot (J\vec{a}^1)_{mj}\right) \Phi^{1,\sjump}_{(m,i)j}
    -
    \entVar_{ij}^T \left(\blocktensor{\Jan}^{1,\sloc}_{ij} \cdot (J\vec{a}^1)_{ij}\right) \Phi^{1,\sjump}_{(i,m)j}
    \\
    =&
    \jump{\entVar}_{(i,m)j}^T 
    \blocktensor{f}^{*}(\state{u}_{ij}, \state{u}_{mj}) \cdot \avg{J\vec{a}^1}_{(i,m)j}
    - \avg{J\vec{a}^1}_{(i,m)j}\cdot \jump{\blocktensor{\psi}}_{(i,m)j}\nonumber
    \\& \;
    \change{-}\,\entVar_{mj}^T \left(\blocktensor{\Jan}^{1,\sloc}_{mj} \cdot (J\vec{a}^1)_{mj}\right) \change{\Phi^{1,\sjump}_{(i,m)j}}
    -
    \entVar_{ij}^T \left(\blocktensor{\Jan}^{1,\sloc}_{ij} \cdot (J\vec{a}^1)_{ij}\right) \Phi^{1,\sjump}_{(i,m)j}
    \\
    =&
    \jump{\entVar}_{(i,m)j}^T 
    \blocktensor{f}^{*}(\state{u}_{ij}, \state{u}_{mj}) \cdot \avg{J\vec{a}^1}_{(i,m)j}
    - \avg{J\vec{a}^1}_{(i,m)j}\cdot \jump{\blocktensor{\psi}}_{(i,m)j} \nonumber
    \\& \;
    \change{-}
    \change{2\avg{\left(\entVar^T \blocktensor{\Jan}^{1,\sloc}\right)\cdot (J\vec{a}^1)}_{(i,m)j}} \Phi^{1,\sjump}_{(i,m)j}
\end{align*}

Using the identity $\avg{ab} = \avg{a}\avg{b} + \frac{1}{4}\jump{a}\jump{b},$
we collect all contributions involving averaged contravariant metric terms to obtain the entropy residual
\begin{align}
    r_{(i,m)j} =& \jump{\entVar}_{(i,m)j}^T 
    \blocktensor{f}^{*}(\state{u}_{ij}, \state{u}_{mj}) \cdot \avg{J\vec{a}^1}_{(i,m)j}
    - \avg{J\vec{a}^1}_{(i,m)j}\cdot \jump{\blocktensor{\psi}}_{(i,m)j}\nonumber
    \\&
    \change{-2\avg{\left(\entVar^T \blocktensor{\Jan}^{1,\sloc}\right)}_{(i,m)j} \cdot \avg{J\vec{a}^1}_{(i,m)j}} \Phi^{1,\sjump}_{(i,m)j}
    \change{-\frac{1}{2}\left(\jump{\left(\entVar^T \blocktensor{\Jan}^{1,\sloc}\right)}_{(i,m)j} \cdot \jump{J\vec{a}^1}_{(i,m)j}\right)} \Phi^{1,\sjump}_{(i,m)j}
    \\=&
    \left(\jump{\entVar}_{(i,m)j}^T 
    \blocktensor{f}^{*}(\state{u}_{ij}, \state{u}_{mj}) 
    -\jump{\blocktensor{\psi}}_{(i,m)j}
    - 2\avg{\left(\entVar^T \blocktensor{\Jan}^{1,\sloc}\right)}_{(i,m)j}\Phi^{1,\sjump}_{(i,m)j}\right)\cdot \avg{J\vec{a}^1}_{(i,m)j}\nonumber
    \\
    &\; - \frac{1}{2}\left(\jump{\left(\entVar^T \blocktensor{\Jan}^{1,\sloc}\right)}_{(i,m)j} \cdot \jump{J\vec{a}^1}_{(i,m)j}\right) \Phi^{1,\sjump}_{(i,m)j}
    \label{eq:ent_prod_ersing2}
    \\=& \label{eq:jumpJa}
    - \frac{1}{2}\left(\jump{\left(\entVar^T \blocktensor{\Jan}^{1,\sloc}\right)}_{(i,m)j} \cdot \jump{J\vec{a}^1}_{(i,m)j}\right) \Phi^{1,\sjump}_{(i,m)j}.
\end{align}

The terms appearing in the first line of \eqref{eq:ent_prod_ersing2} vanish, since they satisfy the generalized Tadmor condition associated with the original fluxes of \citet{ersing2025entropy}, as shown in \eqref{eq:ent_prod_ersing}. 
However, the remaining contribution in \eqref{eq:jumpJa} does not vanish in general. 
Consequently, the particular reformulation of the non-conservative two-point terms by \citet{ersing2025entropy} that we propose in \eqref{eq:ersing_fluxes_noncons_2d}, while locally well-balanced and compatible with the novel flux-differencing formulation, does not guarantee semi-discrete entropy conservation on general curvilinear meshes.

We further note that node-wise blending coefficients in hybrid DGSEM/FV subcell limiting do not, in general, preserve semi-discrete entropy conservation or entropy stability, even when both the DGSEM and FV discretizations are individually entropy conservative/stable for arbitrary blending coefficients \cite{lin2024high}. This contrasts with element-wise limiting, for which semi-discrete entropy consistency is maintained for any choice of $\alpha$.

Recent work by \citet{lin2024high,kuzmin2022limiter,christner2025entropy} has shown that the loss of semi-discrete entropy consistency arising from nodal limiting can be remedied through an entropy-aware selection of the blending coefficients, provided that the underlying low-order FV scheme is entropy stable. In particular, the limiting coefficients $\alpha$ can be chosen to enforce semi-discrete entropy consistency. The same strategy can be applied to compensate for the entropy production introduced by the modified two-point terms of \citet{ersing2025entropy} given in \eqref{eq:ersing_fluxes_2d_again}, \eqref{eq:ersing_fluxes_noncons_2d}.

\begin{remark}
    The well-balanced, subcell limiting strategy is entropy conservative provided the jump in the metric terms
    \[
    \jump{J\vec{a}^1}_{(i,m)j} = 0.
    \]
    This holds on straight-sided quadrilateral meshes provided that all elements in the mesh are parallelograms, e.g., Cartesian boxes, rectangular elements, or diamond elements.
\end{remark}

\end{document}